\documentclass[a4paper]{article}
\usepackage{template}

\newcommand{\Formalsch}{\mathrm{FSch}}
\newcommand{\nFormalsch}{\mathrm{FSch}^{\mathrm{qt}}}
\newcommand{\smFormalsch}{\mathrm{FSch}^{\mathrm{sq}}}
\newcommand{\LocSpaces}{\mathrm{LocSpaces}}
\newcommand{\complproj}{(\mathrm{Proj}^{\omega_1,\mathrm{cpl}}_{R_I^\wedge})^{\mathrm{op}}}

\newcommand{\ideals}{\mathrm{tt}\text{-}\mathrm{Cat}^{\mathrm{wcg}}}
\newcommand{\largett}{\mathrm{tt}\text{-}\mathrm{Cat}^{\mathrm{mcg}\text{-}\mathrm{fun}}}
\newcommand{\higherlargett}{\mathrm{tt}\text{-}\mathrm{Cat}^{\mathrm{mcg}\text{-}\mathrm{fun}}_{\infty}}
\newcommand{\dualpr}{\mathrm{Cat}^{\mathrm{dual}}_{\mathrm{st}}}
\newcommand{\tors}[1]{D_{{#1}\text{-}\mathrm{tors}}}
\newcommand{\Dtors}{D_{\mathrm{tors}}}
\newcommand{\prcpt}{\mathrm{Pr}^{\mathrm{L}}_{\mathrm{st},\omega}}
\newcommand{\pr}{\mathrm{Pr}^{\mathrm{L}}_{\mathrm{st}}}

\newcommand{\CAlg}{\mathrm{CAlg}}
\newcommand{\Nuc}{\mathrm{Nuc}}
\newcommand{\eNuc}{\mathrm{Nuc}^\mathrm{Ef}}
\newcommand{\csNuc}{\mathrm{Nuc}^{\mathrm{CS}}}
\newcommand{\Perf}{\mathrm{Perf}}

\newcommand{\fld}{\mathrm{Fld}}

\DeclareMathOperator*{\colim}{colim}

\newcommand{\Hom}{\operatorname{Hom}}
\newcommand{\sHom}{\operatorname{\mathcal{H}om}}
\newcommand{\RHom}{\operatorname{RHom}}
\newcommand{\End}{\operatorname{End}}
\newcommand{\Map}{\operatorname{Map}}

\newcommand{\Ker}{\mathrm{Ker}}

\newcommand{\Spc}{\mathrm{Spc}}
\newcommand{\Spec}{\mathrm{Spec}}
\newcommand{\Spf}{\mathrm{Spf}}

\newcommand{\mR}{\mathrm{R}}

\newcommand{\mcg}{\mathrm{mcg}}
\newcommand{\ang}[1]{\langle{#1}\rangle}
\newcommand{\oang}[1]{\langle{#1}\rangle_{\otimes}}
\newcommand{\Supp}{\mathrm{Supp}}
\newcommand{\sO}{\mathcal{O}}
\newcommand{\op}{\mathrm{op}}
\newcommand{\Idem}{\mathrm{Idem}}
\newcommand{\os}{\overset}
\newcommand{\Kos}{\mathrm{Kos}}

\newcommand{\Frak}{\mathfrak}
\newcommand{\Ho}{\mathrm{Ho}}
\newcommand{\id}{\mathrm{id}}
\newcommand{\Id}{\mathrm{Id}}
\newcommand{\rig}{\mathrm{rig}}
\newcommand{\Ind}{\mathrm{Ind}}
\newcommand{\trdeg}{\mathrm{tr.deg}}

\newcommand{\nat}{\mathbf{N}}
\newcommand{\intg}{\mathbf{Z}}

\title{Reconstruction of Formal Schemes from Categories of Nuclear Modules}
\author{Hisato Matsukawa\thanks{Department of Mathematics, Faculty of Science, Hokkaido University\\Kita 10, Nishi 8, Kita-Ku, Sapporo, Hokkaido, 060-0810, Japan\\Email:matsukawa.hisato.f4@elms.hokudai.ac.jp,hmnr0211@gmail.com\\
Keywords: formal schemes, nuclear modules, Balmer spectrum.}}
\date{}

\begin{document}
\maketitle

\begin{abstract}
  We provide a partially functorial and constructive reconstruction procedure for formal schemes from symmetric monoidal categories of nuclear modules.
  More precisely, for a formal scheme \(\Frak{X}\), we show that the derived category of torsion modules \(\Dtors(\Frak{X})\) can be recovered as the maximal strongly compactly generated localizing tensor ideal of Efimov's category \(\eNuc(\Frak{X})\), and similarly for the Clausen--Scholze category \(\csNuc(\Frak{X})\).
  Combining this with the Balmer spectrum, we reconstruct \(\Frak{X}\) from the corresponding symmetric monoidal category of nuclear modules.
  Moreover, for formal schemes topologically of finite type over a field or over \(\mathbb{Z}\), the contravariant functor \(\Frak{X} \mapsto \eNuc(\Frak{X})\) is fully faithful; in the affine case, the analogous statement holds for \(\Frak{X} \mapsto \csNuc(\Frak{X})\).
  \end{abstract}

\tableofcontents

\section{Introduction}

The aim of this paper is to establish a reconstruction result for formal schemes from Efimov's and Clausen--Scholze categories of nuclear modules.

For a formal scheme \(\Frak{X}\), let \(\eNuc(\Frak{X})\) denote Efimov's category of nuclear modules \cite{Efimov2025LocalizingInvariantsInverseLimits}, and let \(\csNuc(\Frak{X})\) denote the Clausen--Scholze category of nuclear modules in the sense of \cite{Scholze2026LecturesAnalyticGeometry,And23,AM24}.

For an \(\infty\)-category \(C\), we denote its homotopy category by \(\Ho(C)\).

\begin{thm}[Corollary \ref{cor:identical_formal_schemes}]
Let \(\Frak{X},\Frak{Y}\) be Noetherian formal schemes.
\begin{enumerate}
\item If there is a symmetric monoidal triangulated equivalence \(\Ho(\eNuc(\Frak{X}))\cong \Ho(\eNuc(\Frak{Y}))\), then \(\Frak{X}\cong \Frak{Y}\).
\item If \(\Frak{X},\Frak{Y}\) are affine, and there is a symmetric monoidal triangulated equivalence \(\Ho(\csNuc(\Frak{X}))\cong \Ho(\csNuc(\Frak{Y}))\), then \(\Frak{X}\cong \Frak{Y}\).
\end{enumerate}
\end{thm}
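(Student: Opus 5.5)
The proof will be carried out in three stages, which are exactly the ones sketched in the abstract. First, recover the torsion category intrinsically from the tensor triangulated category. Second, recover the underlying topological space with its structure from the Balmer spectrum. Third, recover the formal structure sheaf.

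\emph{Stage 1: recovering \(\Dtors(\Frak{X})\).} I will show that \(\Dtors(\Frak{X})\), embedded in \(\eNuc(\Frak{X})\) through the natural fully faithful symmetric monoidal inclusion, is the maximal strongly (rigidly) compactly generated localizing tensor ideal. The argument has two directions.

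The first direction is that \(\Dtors(\Frak{X})\) is indeed such an ideal. It is compactly generated by Koszul-type objects \(\Kos(I)\) for ideals of definition. These generators are dualizable in \(\eNuc\), and the ideal property comes from \(\Gamma_I\) being smashing.

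The second direction is that any localizing ideal \(\mathcal{L}\) whose compact objects are dualizable in \(\eNuc(\Frak{X})\) lies inside \(\Dtors\). The plan here is to reduce to the affine case \(\Spf A\). There, a dualizable nuclear module restricts to a perfect complex on each \(A/I^n\). A compact object \(P\) of \(\mathcal{L}\) satisfies \(P\otimes -\) preserves compacts in the ideal. Using the ideal property, I want to show that \(P\) is killed by inverting the generators of \(I\), i.e.\ that \(P\otimes A[1/f]\)-type objects vanish because \(\eNuc\) is ``complete''.

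This step is the main obstacle. The key input I would try first is that in \(\eNuc(\Spf A)\) the unit is not compact, while the compact objects of a localizing ideal \(\mathcal{L}\) with dualizable compacts must be Hom-finite against the unit. That forces them to be \(I\)-torsion, i.e.\ in \(\ang{\Kos(I)}\).

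For the Clausen--Scholze category, the same argument should go through only in the affine case. That is the reason for the restriction in (2).

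\emph{Stage 2: invariance and the Balmer spectrum.} Since the characterization in Stage 1 is intrinsic, it is preserved by symmetric monoidal equivalences. So an equivalence \(\Ho(\eNuc(\Frak{X}))\cong\Ho(\eNuc(\Frak{Y}))\) restricts to a symmetric monoidal equivalence of the compact parts \(\Ho(\Dtors(\Frak{X}))^\omega\cong \Ho(\Dtors(\Frak{Y}))^\omega\).

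To make this work at the level of homotopy categories, I will check that compactness, dualizability, localizing ideals and the strongly-compactly-generated condition are all detectable in the triangulated homotopy category. This holds because coproducts and compactness are triangulated notions.

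Next, the Balmer spectrum of \(\Dtors(\Frak{X})^\omega\) is a tt-category whose spectrum is the underlying space \(|\Frak{X}|\). This is the Thomason classification applied to the reduced subscheme, since \(\Dtors^\omega=\Perf_{Z}\)-type supported perfect complexes on an ideal of definition. So \(|\Frak{X}|\cong|\Frak{Y}|\).

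\emph{Stage 3: the structure sheaf.} I will recover \(\sO_{\Frak{X}}\) as the endomorphism ring of the unit of \(\eNuc\), computed locally. Over an open \(U\) corresponding to a quasi-compact open of the spectrum, the localization \(\eNuc(\Frak{X})|_U\) is obtained by Verdier quotient by objects supported on the complement, applied to the whole category. Then \(\End(\mathbf{1}_U)=\Gamma(U,\sO_{\Frak{X}})\), using \(\mathbf{1}=\lim \sO/I^n\) in \(\eNuc\).

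In the affine Clausen--Scholze case it suffices to take \(\pi_0\End(\mathbf{1})=A\) together with the topology. The topology is recovered as the \(I\)-adic one, with \(I\) determined by the support \(|\Frak{X}|\subset\Spec A\), i.e.\ \(\sqrt{I}\) is the intersection of the primes in the spectrum. The Noetherian hypothesis makes \(\sqrt{I}\)-adic and \(I\)-adic topologies agree.

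Assembling these gives \(\Frak{X}\cong\Frak{Y}\) as locally topologically ringed spaces.
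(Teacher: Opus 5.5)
Your outline follows the paper's architecture: recover \(\Dtors(\Frak{X})\) as the maximal strongly compactly generated ideal, then take a Balmer-type spectrum. However, the maximality half of Stage~1, which carries the real content, is missing, and the input you propose for it is false. The unit of \(\eNuc(\Spf A)\) \emph{is} compact: \(\eNuc(\Spf A)^\omega\simeq\Perf(A)\), and \(\rho\colon D(A)\to\eNuc(\Spf A)\) is fully faithful and strongly continuous. It is \(\Dtors\) whose unit fails to be compact. For the same reason nothing is killed by ``completeness'': \(\rho(A[1/f])\neq0\) for any non-nilpotent \(f\in I\). In fact \(\rho(D(A))=\ang{\eNuc(\Spf A)^\omega}\) is itself compactly generated by dualizable objects. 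So a non-torsion compact can only be excluded by showing that the tensor ideal it generates escapes \(\rho(D(A))\), and that requires an explicit nuclear witness.

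The paper first notes that every strongly compactly generated ideal lies in \(\rho(D(A))\), because \(\eNuc^\omega=\Perf(A)\). It then proves that if \(\rho(F)\otimes x\in\rho(D(A))\) for all \(x\in\csNuc(\Spf A)\), then \(F\) is \(I\)-torsion. The argument takes the generator \(G=\RHom(\prod_{\nat}A,A)\) and reduces to \(F=\kappa(\Frak{p})\) with \(I\not\subset\Frak{p}\). Applying \(\sigma\RHom(\prod_{\nat}A,-)\) to the counit \(\rho\sigma(\rho(F)\otimes G)\to\rho(F)\otimes G\) gives \(\kappa(\Frak{p})\otimes L\otimes L\to\kappa(\Frak{p})\otimes\lim_n\bigoplus_{\nat\times\nat}A/I^n\), where \(L=\lim_n\bigoplus_{\nat}A/I^n\). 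The diagonal matrix \((f,f^2,f^3,\ldots)\) with \(f\in I\setminus\Frak{p}\) lies in the target, but it has infinite rank and so is not in the source. Nothing in your sketch plays this role.

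The ideal direction is also thinner than you suggest. \(\Gamma_I\) being smashing in \(D(A)\) only controls tensoring with objects of \(\rho(D(A))\), while most nuclear modules lie outside it. The paper instead uses base change to \(D(A/I)\), resting on \(\Nuc(\Ind(D(A/I)^{\omega_1}))=D(A/I)\).

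Stage~3 also fails as written. Suppose, as in your Stage~2, the objects ``supported on the complement'' of \(U\) are the compacts of \(\Dtors\) with support in \(|\Frak{X}|\setminus U\). Then localizing all of \(\eNuc(\Frak{X})\) at the ideal they generate does not produce \(\sO_{\Frak{X}}(U)\). Take \(A=\mathbf{Z}_p\langle x\rangle\), \(I=(p)\), and \(U=\{x\neq0\}\). The ideal is \(\rho(\tors{(p,x)}(A))\), using that \(\Dtors\) is an ideal. Its right idempotent is \(\rho(C)\) with \(C=\mathrm{cofib}(R\Gamma_{(p,x)}A\to A)\). By full faithfulness of \(\rho\), the new unit has endomorphism ring \(H^0(C)=A\) (depth \(2\)), not \(\mathbf{Z}_p\langle x^{\pm1}\rangle\), and sheafifying does not repair the stalks. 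So the kernel of the actual restriction \(\eNuc(\Frak{X})\to\eNuc(U)\) is strictly larger than this compactly generated ideal.

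The paper instead localizes \(\Dtors(\Frak{X})=\eNuc(\Frak{X})^{\mcg}\). Over \(D(f)\) this gives \(\tors{I}(A_f)\), whose unit endomorphisms are the derived \(I\)-completion of \(A_f\), equal to \(\sO_{\Frak{X}}(U)\) by Noetherianity (weak proregularity). This is exactly why the paper builds a Balmer spectrum for the non-unital \(\Dtors^\omega\), with structure sheaf \(U\mapsto\End(1_{T_U})\) for \(T=\Dtors(\Frak{X})\). Your affine shortcut is fine: take \(A=\End(1)\) and recover \(V(I)\) as the support of \(\eNuc^{\mcg}\cap\eNuc^\omega\) in \(\Spc(\Perf A)=\Spec A\). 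But it only works once maximality is established.
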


Moreover, we give a partially functorial and constructive reconstruction.

The proof is roughly as follows.
First, we recover the derived category \(\Dtors(\Frak{X})\) of torsion modules from the nuclear categories \(\eNuc(\Frak{X})\) and \(\csNuc(\Frak{X})\).
More precisely, \(\Dtors(\Frak{X})\) is recovered as a certain maximal strongly compactly generated localizing ideal.
Next, we recover the formal scheme \(\Frak{X}\) itself from \(\Dtors(\Frak{X})\) by using a slight modification of Balmer's tensor triangular spectrum.

We also discuss a second reconstruction method in the spirit of Liu's functorial spectrum \cite{Liu2011FunctorsTriangulatedTensorCategories}.
Instead of first extracting \(\Dtors(\Frak{X})\), this method uses field-valued points of the nuclear category.
It yields the following full faithfulness result.

Let \(k\) be a commutative ring.
Let \(\mathrm{fin.FSch}_k\) be the category of formal schemes topologically of finite type over \(k\), and let \(\mathrm{fin.aff.FSch}_k\) be the category of affine formal schemes topologically of finite type over \(k\).

Let \(\dualpr\) be the \(\infty\)-category of dualizable presentable stable \(\infty\)-categories and functors which admit two successive right adjoints.
Let \(\CAlg_k(\dualpr)\) be the \(\infty\)-category of commutative algebra objects in \(\dualpr\) under \(D(k)\).

\begin{thm}
  Let \(k=\intg\) or a field.
  The following functors are fully faithful:
  \begin{align*}
    (\mathrm{fin.FSch}_k)^\op&\to \CAlg_k(\dualpr):\Frak{X}\mapsto \eNuc(\Frak{X}),\\
    (\mathrm{fin.aff.FSch}_k)^\op&\to \CAlg_k(\dualpr):\Frak{X}\mapsto \csNuc(\Frak{X}).
  \end{align*}
\end{thm}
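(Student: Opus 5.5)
Following the introduction, I would prove full faithfulness with the second, ``Liu-style'' reconstruction, using field-valued points of the nuclear category. The goal is to recover from \(\eNuc(\Frak{X})\) a functor on rings that is representable by \(\Frak{X}\).

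For \(C\in\CAlg_k(\dualpr)\) and a field \(K\) over \(k\), I would consider the space \(\Map_{\CAlg_k(\dualpr)}(C, D(K))\), and more generally \(\Map(C, D(A))\) for suitable discrete \(k\)-algebras \(A\). The central claim is the following. For \(\Frak{X}=\Spf(R)\) with \(R\) topologically of finite type over \(k\) and ideal of definition \(I\), maps \(\eNuc(\Spf R)\to D(A)\) in \(\CAlg_k(\dualpr)\) correspond to continuous maps \(R\to A\), with \(A\) discrete, that is, to maps killing some power of \(I\). The same should hold for \(\csNuc\).

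I would prove this claim in four steps.
\begin{enumerate}
\item Write \(\eNuc(\Spf R)\) as the dualizable limit \(\lim_n D(R/I^n)\), in the sense of Efimov.
\item Observe that a symmetric monoidal functor \(F\) to \(D(A)\) that is strongly continuous induces a map \(R\to \End(F(\mathbf 1))=A\).
\item Show that this map must factor through some \(R/I^n\). The point is that \(F\) preserves the compact/nuclear structure, and the unit's image is compact in \(D(A)\). Topological nilpotence of \(I\) then forces some power of \(I\) to act by zero.
\item Conversely, base change along \(R/I^n\to A\) produces such a functor, and one checks that the space of such functors is discrete.
\end{enumerate}
Together these give a natural identification of \(\Map(\eNuc(\Frak X),D(-))\), restricted to discrete rings, with \(\Frak X(-)\) as a functor of points.

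To pass from affine to general \(\Frak X\) for \(\eNuc\), I would use Zariski descent of \(\eNuc\) along open covers. This makes \(\Frak X\mapsto\eNuc(\Frak X)\) a limit of the affine pieces. Since \(\dualpr\)-maps into \(D(A)\) for local \(A\) should factor through a single affine open, the functor of points still recovers \(\Frak X(A)\) for local \(A\). Because formal schemes of finite type over \(k\) are determined as functors on discrete \(k\)-algebras, restricted to local ones and glued, this yields full faithfulness.

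Concretely, the argument goes in four moves:
\begin{enumerate}
\item Faithfulness: the map on \(K\)-points together with the induced map on structure sheaves determines the morphism of formal schemes.
\item Fullness: given a functor \(\eNuc(\Frak Y)\to\eNuc(\Frak X)\), compose with the points of \(\Frak X\) to get a natural transformation \(\Frak X(-)\to\Frak Y(-)\) on discrete Artinian or finite-type algebras.
\item By finite-typeness, this natural transformation comes from a morphism \(\Frak X\to\Frak Y\), obtained as a compatible system \(X_n\to \Frak Y\) of maps from the reductions.
\item Check that the original functor is the pullback functor, by comparing on the generators \(\mathcal O_{X_n}\)-pushforwards.
\end{enumerate}
The hypothesis that \(k\) is \(\intg\) or a field is used so that finite type rings are Jacobson: points with residue fields finite over the prime field, together with their nilpotent thickenings, detect everything.

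The main obstacle is step 3 of the central claim, and its analogue for \(\csNuc\), where the corresponding statement is only claimed in the affine case. This is the factorization of a strongly continuous symmetric monoidal functor \(\eNuc(\Spf R)\to D(A)\) through some \(D(R/I^n)\). For Efimov's category, I would first try to use the explicit compact-morphism description of the dualizable limit. The right adjoint preserves colimits, so the image of the unit-endomorphism action of \(x\in I\) is a trace-class, hence eventually nilpotent, map on the compact object \(A\), which should force \(x^N\mapsto 0\). For \(\csNuc\), the lack of Zariski descent of this type explains the affine restriction. There I would instead use the identification \(\csNuc(\Spf R)\simeq \Nuc(R_I^\wedge)\) and argue directly with the nuclear \(R\)-module structure.
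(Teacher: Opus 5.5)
\textbf{Gap: Step~3 of your central claim.} The whole argument rests on Step~3: every map $\eNuc(\Spf R)\to D(A)$ in $\CAlg_k(\dualpr)$ kills a power of $I$. The justification you give does not work.

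\begin{itemize}
\item In any closed symmetric monoidal category every endomorphism of the unit is trace-class, since for $x=y=1$ the map $\Hom(1,\RHom(1,1)\otimes 1)\to\Hom(1,1)$ is the identity. So the image of $x\in I$ acting on $A$ is trace-class for trivial reasons.
\item Trace-class does not imply nilpotent: the identity of $A$ is trace-class. Nothing in your sketch distinguishes elements of $I$ from units.
\item The description $\eNuc(\Spf R)\simeq\lim^{\mathrm{dual}}_nD(R/I^n)$ in your Step~1 does not help. It controls functors \emph{into} $\eNuc$, not out of it, and the paper uses it only on the target side.
\end{itemize}

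Without Step~3 you have neither the identification of $\Map(\eNuc(\Frak Y),D(A))$ with $\Frak Y(A)$ nor your fullness step. Your reduction to a single affine open for local $A$ is also only asserted. The paper's analogue, Lemma~\ref{lem:smashing_descent_of_points}, needs a target in which $x\otimes y\simeq0$ forces $x\simeq0$ or $y\simeq0$, which is why it works with fields.

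\textbf{How the paper handles this.} The paper never proves Step~3 for arbitrary targets. It uses only field-valued points $D(K)$ with $\trdeg_kK<\Frak{c}$. It gets $I\subset\Frak p=\Ker(R_I^\wedge\to K)$ from a purely algebraic lemma, Lemma~\ref{lem:trancendental_degree_of_residue_field}. If $f\in I\setminus\Frak p$, then either $k((t))\to\kappa(\Frak p)$, $t\mapsto f$, or, for $k=\intg$, the injection $\{0,1\}^\nat\to R_I^\wedge/\Frak p$, $(\epsilon_n)\mapsto\sum_n\epsilon_nf^n$, forces $\trdeg\ge\Frak{c}$. This is where ``$k=\intg$ or a field'' is used, not Jacobson-ness; the lemma fails for $k=\intg_p$.

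Because $\Frak Y$ is nearly of $<\Frak{c}$-type, these points detect $\Dtors(\Frak Y)^\perp$. Hence any $F$ sends $\Dtors(\Frak X)^\perp$ into $\Dtors(\Frak Y)^\perp$, so $F$ is a morphism of $\higherlargett$. Corollary~\ref{cor:fully_faithful_weak_ver} then:
\begin{itemize}
\item obtains $f\colon\Frak Y\to\Frak X$ from the Balmer spectrum of $\Dtors$;
\item reduces to affines;
\item computes the mapping space as $\lim_n\colim_m\Hom(A/I^m,B/J^n)$, using $\eNuc(\Frak Y)\simeq\lim^{\mathrm{dual}}_nD(B/J^n)$ and Lemma~\ref{lem:quotient_of_nuclear_modules}.
\end{itemize}

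\textbf{A possible repair of your route.} If you want to keep the functor-of-points approach, Step~3 must use the nuclear structure on a non-compact object. Here is one way.

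\begin{itemize}
\item Let $G=\RHom(\prod_\nat R_I^\wedge,R_I^\wedge)\in\csNuc\subset\eNuc$. Let $e_n\colon 1\to G$ be the coordinate vectors and $e_m^*\colon G\to 1$ the coordinate projections, so $e_m^*e_n=\delta_{mn}$.
\item Nuclearity gives $G\otimes G\simeq\RHom(\prod_{\nat\times\nat}R_I^\wedge,R_I^\wedge)$. This contains the diagonal element $\Delta_f=\sum_nf^ne_n\otimes e_n$ for $f\in I$, and $(e_m^*\otimes\id)\Delta_f=f^me_m$.
\item Let $\Phi$ be a symmetric monoidal functor into $D(K)$. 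The vectors $\Phi(f)^m\Phi(e_m)$ then all lie in one finite-dimensional subspace of $\pi_0\Phi(G)$. On the other hand, the $\Phi(e_m)$ are linearly independent, because $\Phi(e_m^*)\Phi(e_n)=\delta_{mn}$.
\item This forces $\Phi(f)=0$. Passing to the residue fields of $A$ and using that $I$ is finitely generated then gives $I^N\mapsto0$.
\end{itemize}
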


We prove this in a more general form in Theorem \ref{thm:fully_faithfulness_strong_ver}.

In Section 2, we introduce notation and recall basic properties of triangulated categories, tensor triangulated categories, and categories associated with formal schemes.
In Section 3, we prove that the maximal strongly compactly generated localizing ideals of \(\eNuc(\Frak{X})\) and \(\csNuc(\Frak{X})\) are given by the derived category of torsion modules \(\Dtors(\Frak{X})\).
In Section 4, we introduce a modification of Balmer's tensor triangular spectrum and apply it to \(\Dtors(\Frak{X})\).
Since the unit object of \(\Dtors(\Frak{X})\) is not compact, we need to slightly modify the usual definition.
We also prove the functoriality needed later.
In Section 5, we give a partially functorial and constructive reconstruction of formal schemes.
In Section 6, we give another approach to reconstruction and, as a consequence, prove the full faithfulness of the functors \(\eNuc\) and \(\csNuc\).

\subsection*{Assumptions and Notation}

We assume the existence of a Grothendieck universe.

All rings are assumed to be commutative and unital.

All formal schemes are assumed to be quasi-compact and quasi-separated and to admit a finitely generated ideal of definition locally.

Unless otherwise specified, all triangulated categories are assumed to admit small coproducts and to be well generated.

By an \(\infty\)-category, we mean an \((\infty,1)\)-category.
We regard ordinary categories and \((2,1)\)-categories as \(\infty\)-categories in the usual way.

For a functor \(F\), we denote its right adjoint, when it exists, by \(F^\mR\).

\section{Preliminaries}

\subsection{Tensor triangulated categories}
In this section, we recall some basic facts on triangulated categories and tensor triangulated categories.
See \cite{Nee01} and \cite{BalmerSpectrumPrimeIdeals} for details.

Let \(T,T'\) be triangulated categories.
A functor \(F:T\to T'\) is called continuous if it is exact and preserves small coproducts.
Such a functor has an exact right adjoint.
If its right adjoint is also continuous, then \(F\) is called strongly continuous.
Strongly continuous functors preserve \(\kappa\)-compact objects for every regular cardinal \(\kappa\).
The full subcategory of \(\kappa\)-compact objects is denoted by \(T^\kappa\).

Let \(I\subset T\) be a thick subcategory.
It is called a localizing subcategory if it is closed under small coproducts.
A localizing subcategory \(I\) is well-generated as a triangulated category if and only if it is generated by a set of objects of \(T\).
In this case, the Verdier quotient \(L_I:T\to T/I\) is continuous and has a fully faithful right adjoint \(L_I^{\mR}\).
The essential image of \(L_I^{\mR}\) is the subcategory \(I^\perp\) of \(I\)-local objects, that is, objects \(a\in T\) such that \(\Hom(I,a)=0\).
The inclusion \(\iota_I:I\to T\) also has a right adjoint \(\iota^{\mR}_I\).
The functor \(\iota_I\) is strongly continuous if and only if \(L_I\) is strongly continuous.
In this case, \(I^\kappa=T^\kappa\cap I\).

The localizing subcategory generated by a subset \(S\subset T\) is denoted by \(\ang{S}\).
If each \(s\in S\) is compact in \(T\), then \(\ang{S}\) is called compactly generated.
If, moreover, \(S\) is finite, then \(\ang{S}\) is called finitely compactly generated.
In this case, the inclusion \(\iota_I:I\to T\) is automatically strongly continuous, and \((T/I)^\kappa=\Idem(T^\kappa/I^\kappa)\), where \(\Idem\) denotes idempotent completion.

A tensor triangulated category, or tt-category for short, is a triangulated category \(T\) equipped with a symmetric monoidal structure \(-\otimes-:T\times T\to T\) which is exact in each variable and preserves small coproducts in each variable.
The unit object is denoted by \(1_T\).
A homomorphism between tt-categories is a symmetric monoidal continuous functor.

A localizing subcategory \(I\) of a tt-category \(T\) is called a localizing ideal if, for all \(a\in T\) and \(b\in I\), one has \(a\otimes b\in I\).
For a subset \(S\subset T\), the localizing ideal generated by \(S\) is denoted by \(\oang{S}\).
If each \(s\in S\) is compact, then \(\oang{S}\) is called (weakly) compactly generated.
If, moreover, \(S\) is finite, then \(\oang{S}\) is called (weakly) finitely compactly generated.
If \(I\) is a well-generated localizing ideal, then the Verdier quotient \(T/I\) inherits a natural tt-structure such that \(L_I:T\to T/I\) is symmetric monoidal. 
A localizing ideal \(I\) is called a smashing ideal if the inclusion \(\iota_I:I\to T\) is \(T\)-linear, that is, if \(\iota_I(\iota^{\mR}_I(a)\otimes b)\cong a\otimes \iota_I(b)\) for all \(a\in T\) and \(b\in I\).
For a smashing ideal \(I\), the inclusion \(\iota_I\) is always strongly continuous.
Moreover, \(I^\perp\) is a localizing ideal and \(\iota_I^\mR\) is identified with the quotient functor \(T\to T/I^\perp\).
In particular, \(I\) inherits a natural tt-structure such that \(\iota_I^\mR\) is symmetric monoidal.

A left idempotent in a tt-category \(T\) is a morphism \(\gamma:e\to 1_T\) such that \(\gamma\otimes e:e\otimes e\to e\) is an isomorphism.
Dually, one defines a right idempotent.
For a left idempotent \(e\to 1_T\), the essential image of \(e\otimes-:T\to T\) is a smashing ideal, and the cone of \(e\to 1_T\) gives a right idempotent.
Conversely, for a smashing ideal \(I\), the counit \(\iota_I\iota_I^\mR(1_T)\to 1_T\) is a left idempotent.
These correspondences give bijections between smashing ideals, left idempotents, and right idempotents; see \cite{BF11}.

\begin{defn}
  Let \(T\) be a tt-category.
  A localizing ideal \(I\) is called strongly compactly generated if it is compactly generated as a localizing subcategory.
\end{defn}

\begin{lem}
  Let \(T\) be a tt-category.
  There exists a unique maximal strongly compactly generated localizing ideal \(I\).
\end{lem}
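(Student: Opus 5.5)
The plan is to take the localizing ideal generated by all compact objects that lie in some strongly compactly generated localizing ideal, and check that this ideal is itself strongly compactly generated. Concretely, let \(S\subset T^\omega\) be the set of compact objects \(c\in T\) such that \(c\in J\) for some strongly compactly generated localizing ideal \(J\). Since \(T\) is well generated, \(T^\omega\) is essentially small, so \(S\) may be taken to be a set of representatives. Put \(I:=\ang{S}\), the localizing subcategory generated by \(S\). Any strongly compactly generated localizing ideal \(J\) is generated by compact objects of \(T\) lying in \(J\), hence by elements of \(S\), so \(J\subset I\). Also \(I\) is compactly generated as a localizing subcategory by construction. Once \(I\) is shown to be a localizing ideal, it is strongly compactly generated and contains every other such ideal, so it is the unique maximum.

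The main step is therefore to show that \(I\) is a tensor ideal. For \(a\in T\), the full subcategory \(\{x\in T: a\otimes x\in I\}\) is localizing, because \(a\otimes-\) is exact and preserves coproducts. It thus suffices to check that \(a\otimes s\in I\) for each \(s\in S\). Choose a strongly compactly generated localizing ideal \(J\) containing \(s\). Then \(a\otimes s\in J\), since \(J\) is an ideal. Moreover \(J\) is generated by compact objects of \(T\) that lie in \(J\), and all of these belong to \(S\), so \(J\subset\ang{S}=I\). Hence \(a\otimes s\in I\).

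A small point to verify is that being ``compactly generated as a localizing subcategory'' means generated by objects compact in \(T\), as in the paper's definition of \(\ang{S}\) being compactly generated when each \(s\) is compact in \(T\). With this reading the argument above is complete, and the only obstacle is the ideal property, which is handled by the observation that the ideals \(J\) witnessing membership in \(S\) are already contained in \(I\). Uniqueness is immediate, since two maximal elements would each contain the other.
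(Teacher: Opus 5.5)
Your proof is correct and follows the paper's argument. The paper also takes \(I=\ang{S}\), where \(S\) is the union of compact generating sets of all strongly compactly generated localizing ideals (a set because \(T^\omega\) is essentially small), and derives the ideal property from \(\bigcup_\alpha I_\alpha\) being closed under tensoring; your check via the localizing subcategory \(\{x\in T\mid a\otimes x\in I\}\) just writes out the step the paper states in one line.
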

\begin{proof}
  Let \(\{I_\alpha\}_\alpha\) be the collection of all strongly compactly generated localizing ideals of \(T\).
  Since each \(I_\alpha\) is generated by a set of compact objects, and since \(T^\omega\) is essentially small, these ideals form a set.
  For each \(\alpha\), choose a set \(S_\alpha\subset T^\omega\) such that \(I_\alpha=\ang{S_\alpha}\), and set \(S=\bigcup_\alpha S_\alpha\).
  Let \(I=\ang{S}\).
  Since \(I=\ang{\bigcup_\alpha I_\alpha}\) and \(\bigcup_\alpha I_\alpha\) is closed under tensoring with arbitrary objects of \(T\), it follows that \(I\) is a localizing ideal.
  By construction, \(I\) is the maximal strongly compactly generated localizing ideal.
\end{proof}

\begin{defn}\label{defn:maximax_compactly_generated_localizing_ideal}
For a tt-category \(T\), we denote the maximal strongly compactly generated localizing ideal of \(T\) by \(T^{\mcg}\).
\end{defn}

The ideal \(T^{\mcg}\) is the main object of study in this paper.
We will prove that the maximal strongly compactly generated localizing ideal of the category of nuclear modules is equivalent to the derived category of torsion modules.

We record the following lemma for later use.

\begin{lem}\label{lem:mcg_is_smashing_ideal}
  Let \(T\) be a tt-category such that \(T^\omega\otimes T^\omega\subset T^\omega\), \(1_T\in T^\omega\), and \(T^\omega\) is rigid, that is, every object of \(T^\omega\) is dualizable.
  Then any strongly compactly generated localizing ideal of \(T\) is a smashing ideal.
\end{lem}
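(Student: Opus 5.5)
Let \(I=\ang{S}\) with \(S\subset T^\omega\) a set of compact objects, and assume \(I\) is a localizing ideal. The plan is to show that \(I\) is generated, as a localizing ideal, by compact objects. It then coincides with the smashing ideal attached to the finite localization away from \(I\cap T^\omega\), by the standard Miller/Neeman theory of finite localizations in compactly generated settings. Since \(T\) need not be compactly generated, I will instead verify directly that the inclusion \(\iota_I\) is \(T\)-linear.

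\textbf{Step 1: \(\iota_I^\mR\) preserves coproducts.} For \(s\in S\) compact in \(T\), we have \(\Hom_I(s,\iota_I^\mR(\bigoplus a_j))=\Hom_T(s,\bigoplus a_j)=\bigoplus\Hom(s,a_j)\). Since \(I\) is compactly generated by \(S\), with each \(s\) compact in \(I\), this shows \(\bigoplus \iota_I^\mR a_j\to \iota_I^\mR\bigoplus a_j\) is an isomorphism. So \(\iota_I\) is strongly continuous, \(I^\perp\) is localizing, and the quotient functor \(T\to T/I\cong I^\perp\) has a continuous right adjoint.

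\textbf{Step 2: \(I^\perp\) is a tensor ideal.} This is where rigidity enters. For \(a\in I^\perp\), \(x\in T\) and \(s\in S\), we need \(\Hom(s,x\otimes a)=0\). First take \(x\in T^\omega\). Then \(x\) is dualizable, and \(\Hom(s,x\otimes a)=\Hom(s\otimes x^\vee,a)\). Here \(s\otimes x^\vee\in I\) because \(I\) is an ideal, so this group vanishes. Next, for general \(x\), consider the full subcategory of \(x\in T\) with \(x\otimes a\in I^\perp\) for all \(a\in I^\perp\). It is localizing, because \(I^\perp\) is localizing by Step 1 and \(\otimes\) preserves coproducts. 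It also contains \(T^\omega\).

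\textbf{Main obstacle.} The difficulty is passing from \(T^\omega\) to all of \(T\), since \(T\) is only assumed well generated and not compactly generated. Instead, I would use the unit \(1_T\in T^\omega\) and the Bousfield triangle \(e\to 1_T\to f\) with \(e=\iota_I\iota_I^\mR 1_T\) and \(f\in I^\perp\).

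The claim is that \(e\otimes -\) realizes \(\iota_I\iota_I^\mR\); it suffices to show that for each \(x\), \(e\otimes x\in I\) and \(f\otimes x\in I^\perp\). For the first, \(e\otimes x\in I\) since \(I\) is an ideal. For the second, it suffices that the class \(\{y: f\otimes y\in I^\perp\}\) is all of \(T\). I would argue by adjunction: \(\Hom(s,f\otimes y)\). Using \(s\) dualizable, this equals \(\Hom(1,s^\vee\otimes f\otimes y)\). Here \(s^\vee\otimes f\in I^\perp\) by the compact case of Step 2, since \(s^\vee\in T^\omega\).

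I then aim to show \(s^\vee\otimes f=0\). The object \(s^\vee\otimes s\otimes f\) is both in \(I\) (as \(s\in I\)) and in \(I^\perp\) (by the compact case applied to \(s^\vee\otimes s\in T^\omega\)), hence it is zero. Since \(s^\vee\) is a retract of \(s^\vee\otimes s\otimes s^\vee\), it follows that \(s^\vee\otimes f\) is a retract of zero, so \(s^\vee\otimes f=0\). Therefore \(\Hom(s,f\otimes y)=0\) for every \(y\), and \(f\otimes y\in I^\perp\).

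Applying \(e\otimes y\to y\to f\otimes y\) then identifies \(\iota_I\iota_I^\mR(y)\cong e\otimes y\). By the \(T\)-linearity criterion (equivalently, \(e\to 1_T\) is a left idempotent, using \(e\otimes f=0\)), \(I\) is a smashing ideal.
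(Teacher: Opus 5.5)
Your proof is correct, but it takes a different route from the paper.

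The paper's argument runs as follows. It restricts to the compactly generated tt-category \(T'=\ang{T^\omega}\) and cites Miller's theory of finite localizations there. This produces a left idempotent \(e\to 1_T\) whose tensor image in \(T'\) is \(I\). Since \(I\) is an ideal of all of \(T\), the essential image of \(e\otimes-\) on \(T\) is still \(I\), and the Balmer--Favi correspondence then makes \(I\) smashing in \(T\).

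You never pass to \(\ang{T^\omega}\). You start from the Bousfield triangle \(e\to 1_T\to f\) and argue in three steps:
\begin{enumerate}
\item By duality, \(s\otimes f\in I\cap I^\perp=0\) for each \(s\in S\).
\item By the triangle identities, it follows that \(s^\vee\otimes f=0\).
\item Hence \(f\otimes y\in I^\perp\) for every \(y\in T\).
\end{enumerate}
This establishes directly, by a duality computation, the fact the paper obtains from the Balmer--Favi correspondence. That fact is that tensoring the triangle with an arbitrary \(y\) yields the localization triangle of \(y\).

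What your route buys is that it is self-contained, and it shows that only the dualizability of the generators in \(S\) is used. Some parts of your write-up are unnecessary:
\begin{itemize}
\item Step 1 and the localizing-subcategory argument for general \(x\) in Step 2 play no role in the final proof.
\item You assert \(s^\vee\in T^\omega\) without justification. It is true because \(1_T\) is compact, but it is not needed: the identity \(\Hom(t,x\otimes a)\cong\Hom(t\otimes x^\vee,a)\) only requires \(x\) to be dualizable.
\end{itemize}

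What the paper's version buys is brevity, given the citations. It also frames the lemma conceptually as finite localization inside \(\ang{T^\omega}\).
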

\begin{proof}
  When \(T=\ang{T^\omega}\), this follows from \cite{Miller1992FiniteLocalizations}.
  In general, let \(I\subset T\) be a strongly compactly generated localizing ideal, and set \(T'=\ang{T^\omega}\).
  Then \(T'\) is a tt-category, \(I\subset T'\) and \(I\) is a smashing ideal in \(T'\).
  Let \(\gamma:e\to 1_{T'}=1_T\) be the left idempotent corresponding to \(I\).
  Since \(I\) is a localizing ideal of \(T\), the essential image of \(e\otimes-:T\to T\) is \(I\).
  Hence \(I\) is a smashing ideal in \(T\).
\end{proof}

\subsection{Categories associated with formal schemes}
In this section, we introduce the derived category of torsion modules, Efimov's category of nuclear modules, and Clausen--Scholze category of nuclear modules over a formal scheme.

Throughout this paper, all formal schemes are assumed to be quasi-compact and quasi-separated, and moreover to locally admit a finitely generated ideal of definition.

\subsubsection{Derived category of torsion modules}

Let \(R\) be a commutative ring, and let \(I\subset R\) be an ideal.
An \(R\)-module \(M\) is called an \(I^\infty\)-torsion module if, for every \(m\in M\), there exists \(n\ge 1\) such that \(I^nm=0\).
Let \(D(R)\) be the derived \(\infty\)-category of \(R\)-modules.
A complex \(F\in D(R)\) is called \(I^\infty\)-torsion if its cohomologies are \(I^\infty\)-torsion.
Let \(\tors{I}(R)\) be the full subcategory of \(D(R)\) spanned by the \(I^\infty\)-torsion objects.
If \(I=(a_1,\ldots,a_n)\) is finitely generated, then \(\tors{I}(R)\) is a smashing ideal of \(D(R)\), and it is generated by the single compact object \(\Kos(R;a_1,\ldots,a_n)\).
The quotient \(D(R)/\tors{I}(R)\) is equivalent to \(D(\Spec R\setminus V(I))\), where \(V(I)\) is the closed subset of \(\Spec R\) corresponding to \(I\).
We denote the classical \(I\)-adic completion of \(R\) by \(R_I^{\wedge}\).

For an affine formal scheme \(U=\Spf(R_I^\wedge)\), we set \(\Dtors(U)=\tors{I}(R)\).
This construction does not depend on the choice of \(R\) and satisfies Zariski descent.
For a general formal scheme \(\Frak{X}\), we define \(\Dtors(\Frak{X})\) by Zariski descent.
Then \(\Dtors(\Frak{X})\) is finitely compactly generated as a localizing subcategory of itself.
For an open subset \(U\subset \Frak{X}\), the restriction functor \(\Dtors(\Frak{X})\to \Dtors(U)\) is a quotient functor whose kernel is a compactly generated smashing ideal.

\subsubsection{Category of nuclear modules of a tt-\(\infty\)-category}

Let \(\pr\) be the \(\infty\)-category of presentable stable \(\infty\)-categories and continuous, that is, colimit-preserving, functors.
A commutative algebra object in \(\pr\) is called a tt-\(\infty\)-category.
A homomorphism between tt-\(\infty\)-categories is a morphism in \(\CAlg(\pr)\), that is, a symmetric monoidal continuous functor.
The symmetric monoidal structure on a tt-\(\infty\)-category is automatically closed.
We denote the right adjoint of \(-\otimes x\) by \(\RHom(x,-)\).

Let \(\prcpt\) be the \(\infty\)-category of compactly generated presentable stable \(\infty\)-categories and continuous functors which preserve compact objects.
A commutative algebra object in \(\prcpt\) is called a compactly generated tt-\(\infty\)-category.
A homomorphism between compactly generated tt-\(\infty\)-categories is a morphism in \(\CAlg(\prcpt)\), that is, a symmetric monoidal continuous functor which preserves compact objects.

\begin{defn}[{\cite[Definitions 8.5]{CS26}}]
  Let \(C\) be a compactly generated tt-\(\infty\)-category.
  An object \(x\in C\) is called nuclear if, for every compact object \(c\in C\), the natural map \( \RHom(c,1_C)\otimes x\to \RHom(c,x)\) is an isomorphism.
  The full subcategory of nuclear objects in \(C\) is denoted by \(\Nuc(C)\).
\end{defn}
This subcategory \(\Nuc(C)\) is closed under tensor products and colimits, and is a presentable stable \(\infty\)-category.
Every homomorphism between compactly generated tt-\(\infty\)-categories preserves nuclear objects.
See \cite[Theorem 8.6]{CS26} and \cite[Theorem 2.4]{MeyerWagnerQHodgeRefinedTC}.

\subsubsection{Efimov's category of nuclear modules}

\begin{defn}
  Let \(E\) be a locally rigid tt-\(\infty\)-category (\cite[Definition C.1.1]{AGKRRV20}) whose unit object \(1_E\) is \(\omega_1\)-compact.
  Define \(E^{\rig}=\Nuc(\Ind(E^{\omega_1}))\).
  \end{defn}
  This definition is equivalent to the usual rigidification; see \cite[Theorem 4.2]{Efimov2025LocalizingInvariantsInverseLimits}.

\begin{defn}[{see \cite{Efimov2025LocalizingInvariantsInverseLimits}}]\label{defn:eNuc}
  For a formal scheme \(\Frak{X}\), define \(\eNuc(\Frak{X})=\Dtors(\Frak{X})^\rig\).
\end{defn}

For affine formal schemes, this definition agrees with Efimov's definition.
By the following proposition, it satisfies Zariski descent.
Its proof is given in the appendix.

\begin{prop}\label{prop:descent_for_eNuc}
  For formal schemes, the assignment \(\Frak{X}\mapsto \eNuc(\Frak{X})\) satisfies Zariski descent in \(\pr\).
  Moreover, for every open subset \(U\subset \Frak{X}\), the restriction functor \(\eNuc(\Frak{X})\to \eNuc(U)\) is a quotient functor and its kernel is a smashing ideal.
\end{prop}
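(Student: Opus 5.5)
The strategy is to reduce everything to the fact that Efimov's rigidification is functorial for strongly continuous symmetric monoidal localizations, and that it sends Verdier sequences of dualizable categories to Verdier sequences. The argument rests on what we already know about \(\Dtors\): it satisfies Zariski descent, and for an open \(U\subset\Frak{X}\) the restriction \(\Dtors(\Frak{X})\to\Dtors(U)\) is a quotient by a compactly generated smashing ideal \(K\).

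\textbf{Restriction to an open.} First I would identify the restriction on \(\eNuc\). Let \(e\to 1\) be the left idempotent of \(K\) in \(\Dtors(\Frak{X})\); since \(K\) is compactly generated, it is generated by compact objects \(c\) of \(\Dtors(\Frak{X})\). These are \(\omega_1\)-compact, so \(e\) is an \(\omega_1\)-compact object, as a countable colimit of objects built from the compact generators (Koszul-type telescopes).

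Hence \(e\to 1\) lives in \(\Dtors(\Frak{X})^{\omega_1}\), and its image in \(\Ind(\Dtors(\Frak{X})^{\omega_1})\) is again a left idempotent. Because the functor \(\Ind(E^{\omega_1})\to E\) is symmetric monoidal, nuclear objects are closed under tensor products, and the unit is nuclear, this idempotent restricts to a left idempotent \(\hat e\to 1\) in \(\eNuc(\Frak{X})\). So \(\hat e\otimes-\) cuts out a smashing ideal \(\hat K\).

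Next I would show \(\eNuc(\Frak{X})/\hat K\simeq \eNuc(U)\). On \(\omega_1\)-compacts we have \(\Dtors(U)^{\omega_1}\simeq \Idem(\Dtors(\Frak{X})^{\omega_1}/K^{\omega_1})\), which uses that \(K\) is compactly generated and the inclusion is strongly continuous. Consequently \(\Ind(\Dtors(U)^{\omega_1})\) is the quotient of \(\Ind(\Dtors(\Frak{X})^{\omega_1})\) by \(\Ind(K^{\omega_1})\), a smashing quotient given by tensoring with the right idempotent \(\mathrm{cone}(e\to1)\).

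Nuclearity is tested against compact objects, namely images of \(\omega_1\)-compact objects of \(\Dtors\). I would then check that the smashing localization \(f\otimes-\) preserves nuclear objects. The nuclear objects of the quotient are exactly the \(f\)-local nuclear objects: a map \(\RHom(c,1)\otimes x\to\RHom(c,x)\) for \(c\) in the quotient can be rewritten via adjunction in terms of \(c\) in the big category, since \(\RHom(c,f\otimes y)\) is \(f\)-local. This gives the quotient statement, with kernel the smashing ideal \(\hat K\).

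\textbf{Descent.} For a cover \(\Frak{X}=U\cup V\), I would show that the square of \(\eNuc\) is a pullback in \(\pr\). The square is obtained by smashing localizations with respect to two idempotents \(f_U, f_V\) such that \(f_U\otimes f_V=f_{U\cap V}\) and \(\mathrm{fib}(1\to f_U\oplus f_V\to f_{U\cap V})=0\). This last relation holds in \(\Dtors(\Frak{X})\), and hence in \(\Ind(\Dtors^{\omega_1})\) and in \(\Nuc\), because all three are images of a single exact sequence of \(\omega_1\)-compact objects.

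For a commutative algebra in \(\pr\) with two smashing localizations satisfying this Mayer–Vietoris relation, the standard argument (as for quasi-coherent sheaves) shows that \(C\to C_U\times_{C_{U\cap V}}C_V\) is an equivalence. The quasi-inverse sends a gluing datum to the fiber product of the pieces, and full faithfulness follows from the fiber sequence \(x\to f_Ux\oplus f_Vx\to f_{U\cap V}x\). General Zariski descent for qcqs formal schemes then follows by induction on the number of affines in a cover.

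\textbf{Main obstacle.} The step I expect to be hardest is showing that the idempotents \(e\) and \(f\) are \(\omega_1\)-compact and that the nuclear objects of the quotient Ind-category are exactly the \(f\)-local nuclear objects. An alternative route is to invoke \cite{Efimov2025LocalizingInvariantsInverseLimits}, where rigidification is shown to commute with such smashing localizations.
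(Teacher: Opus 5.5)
\textbf{There is a genuine gap in the restriction step.} Write \(C_0=\Dtors(\Frak{X})^{\omega_1}\). You make two claims there. The first is that \(f\otimes-\) preserves nuclear objects of \(\Ind(C_0)\). The second is that the nuclear objects of \(\Ind(\Dtors(U)^{\omega_1})\) are exactly the \(f\)-local nuclear objects of \(\Ind(C_0)\). Both already fail for \(x=1\).

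The image of \(f\) in \(\Ind(C_0)\) is compact. A compact nuclear object \(y\) is dualizable, because nuclearity tested against \(c=y\) says \(\RHom(y,1)\otimes y\to\RHom(y,y)\) is an equivalence. Pushing forward along the symmetric monoidal functor \(\Ind(C_0)\to\Dtors(\Frak{X})\), \(f\) would then be dualizable in \(\Dtors(\Frak{X})\).

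This fails for \(\Frak{X}=\Spf(k[y][[x]])\) and \(U=\{y\neq0\}\). With \(R=k[x,y]\), the symmetric monoidal functor \(-\otimes_R R/x\colon\Dtors(\Frak{X})\to D(k[y])\) sends \(f\) to \(k[y^{\pm1}]\), which is not perfect. Yet \(f\) is the unit of \(\Ind(\Dtors(U)^{\omega_1})\), hence nuclear there.

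Your adjunction rewrite breaks because in the quotient the dual of \(c\) is \(\RHom(c,f)\), not \(\RHom(c,1)\otimes f\), and \(c\) is not dualizable. For the same reason the image of \(e\) is not nuclear, so it does not ``restrict'' to \(\eNuc(\Frak{X})\). Closure of nuclear objects under \(\otimes\) and the unit only makes the coreflection \(\nu\colon\Ind(C_0)\to\eNuc(\Frak{X})\) lax monoidal.

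\textbf{What is needed instead.} The restriction \(F\colon\eNuc(\Frak{X})\to\eNuc(U)\) is \(x\mapsto f\otimes x\), landing in the quotient. Its right adjoint is \(\nu\) applied to \(f\)-local objects. Efimov's theorem that \(\nu\) is strongly symmetric monoidal gives \(F^\mR F(x)\cong\nu(f\otimes x)\cong\nu(f)\otimes x\). So the idempotents in \(\eNuc(\Frak{X})\) are \(\nu(e)\to1\to\nu(f)\), not \(e\) and \(f\).

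You must then still prove that \(F\) is essentially surjective. Your proposal never addresses this, and it is the real content; only after it does \(F\) become the smashing localization at \(\nu(f)\). The paper does this through the self-duality \(\Nuc(\Ind(C_0))\simeq\Nuc(\Ind(C_0^\op))\), under which \(\nu\) restricted to \(C_0\) is \(x\mapsto\RHom_{\Ind(C_0^\op)}(x^\op,1)\). It combines this with the fact that smashing localizations commute with internal Hom, and this is where local rigidity is used.

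Once this is in place, your Mayer--Vietoris gluing goes through with \(\nu(f_U),\nu(f_V)\) in place of \(f_U,f_V\), since \(\nu\) is exact and symmetric monoidal. It then matches the paper's argument via \(\Lambda_E\) and Scholze's gluing criterion. Citing Efimov as an ``alternative route'' does not close the gap, because this is exactly what the appendix has to prove.
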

\begin{proof}
  This follows from Corollary \ref{cor:rigidification_preserves_smashing_localizations} and Proposition \ref{prop:descent_for_rigidification}.
\end{proof}

\subsubsection{Clausen--Scholze category of nuclear modules}

In this section, we introduce the Clausen--Scholze category of nuclear modules of a formal scheme; see \cite{Scholze2026LecturesAnalyticGeometry,And23,AM24}.

Let \(R\) be a ring, and let \(I\subset R\) be an ideal.
Let \(\complproj\) be the following symmetric monoidal category.
Its objects are \(\prod_A R_I^\wedge\), where \(A\) is a set of at most countable cardinality.
The morphisms are given by
\[
\Hom(\prod_A R_I^\wedge,\prod_B R_I^\wedge)\cong \prod_B\lim_{n\in\nat}(\bigoplus_{A}R/I^n).
\]
The tensor product is given by
\[
(\prod_A R_I^\wedge)\otimes(\prod_B R_I^\wedge)\cong \prod_{A\times B}R_I^\wedge.
\]

Let \(\complproj\text{-}\mathrm{Mod}\) be the classical category of \(\complproj\)-modules, or equivalently, the category of \(\End_{\complproj}(\prod_{\nat}R_I^\wedge)\)-modules.

\begin{defn}[{\cite[Proposition 7.5]{Efimov2025LocalizingInvariantsInverseLimits}}]
  For a Noetherian affine formal scheme \(U=\Spf(R_I^\wedge)\), define \(\csNuc(U)=\Nuc(D(\complproj\text{-}\mathrm{Mod}))\).
  For a Noetherian formal scheme \(\Frak{X}\), define \(\csNuc(\Frak{X})\) by Zariski descent.
\end{defn}

For an affine formal scheme \(U=\Spf(R_I^\wedge)\), there are natural equivalences \(\Perf(R_I^\wedge)\cong \csNuc(U)^\omega \cong \eNuc(U)^\omega\).

\section{Maximal strongly compactly generated localizing ideals of nuclear modules}

The purpose of this section is to prove Theorem \ref{thm:mcg_is_torsions}, which asserts that \(\csNuc(\Frak{X})^\mcg\cong \eNuc(\Frak{X})^\mcg\cong\Dtors(\Frak{X})\).
We begin with two lemmas.
Lemma \ref{lem:torsion_is_localizing_ideal} shows that \(\Dtors(U)\) is a localizing ideal of the category of nuclear modules, and Lemma \ref{lem:torsion_is_maximal_cg_localizing_ideal} proves its maximality.

In this section, all formal schemes are assumed to be Noetherian.

For an affine formal scheme \(U=\Spf(R_I^\wedge)\), the equivalence \(\Perf(R_I^\wedge)\cong \csNuc(U)^\omega\) induces a fully faithful strongly continuous symmetric monoidal functor
\[
\rho:D(R_I^\wedge)\cong \Ind(\Perf(R_I^\wedge))\to \csNuc(U).
\]
By composing it with the inclusion \(\tors{I}(R)\subset D(R_I^\wedge)\), we obtain a functor \(\Dtors(U)\to \csNuc(U)\).
By patching these functors, we obtain
\[
\Dtors(\Frak{X})\to \csNuc(\Frak{X})
\]
for any formal scheme \(\Frak{X}\).
Via this functor, we identify \(\Dtors(\Frak{X})\) with a localizing subcategory of \(\csNuc(\Frak{X})\).

By patching the functor of \cite[Corollary 7.6]{Efimov2025LocalizingInvariantsInverseLimits}, there is a fully faithful strongly continuous symmetric monoidal functor
\[
\csNuc(\Frak{X})\to \eNuc(\Frak{X}).
\]
We identify \(\csNuc(\Frak{X})\) with a full subcategory of \(\eNuc(\Frak{X})\).

\begin{lem}\label{lem:torsion_is_localizing_ideal}
  Let \(U=\Spf(R_I^\wedge)\) be a Noetherian affine formal scheme, let \(F\in \tors{I}(R_I^\wedge)\), and let \(x\in \eNuc(U)\).
  Then \(\rho(F)\otimes x\) lies in the essential image of \(\tors{I}(R_I^\wedge)\).
\end{lem}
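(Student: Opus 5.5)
Here \(\rho\) is understood as the composite \(D(R_I^\wedge)\to\csNuc(U)\to\eNuc(U)\), which is fully faithful, strongly continuous and symmetric monoidal. Note that \(\tors{I}(R_I^\wedge)\simeq\tors{I}(R)\) since \(R\) is Noetherian. The plan is to reduce to a single compact generator and then use the Koszul complex's self-duality.

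\textbf{Reduction to \(F=K\).} Choose generators \(I=(a_1,\dots,a_n)\) and set \(K=\Kos(R_I^\wedge;a_1,\ldots,a_n)\). Then \(\tors{I}(R_I^\wedge)=\ang{K}\). The functor \(\rho(-)\otimes x:D(R_I^\wedge)\to\eNuc(U)\) is exact and preserves colimits. Since \(\rho\) is fully faithful and colimit-preserving, the essential image of \(\tors{I}(R_I^\wedge)\) is a localizing subcategory of \(\eNuc(U)\). Hence the set of \(F\) satisfying the claim is localizing, and it suffices to treat \(F=K\).

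\textbf{Describing \(\rho(K)\otimes x\).} The object \(K\) is perfect, hence dualizable, with \(K^\vee\cong K[-n]\), and \(\rho\) is symmetric monoidal. It follows that \(\rho(K)\otimes x\cong \RHom(\rho(K),x)[n]\). Moreover \(K\) is a finite iterated cone of the maps \(a_i\) on \(R_I^\wedge\). So \(\rho(K)\otimes x\) is a finite iterated cone of \(a_i:x\to x\), i.e.\ \(x\otimes_{R_I^\wedge}K\) computed in the \(R_I^\wedge\)-linear category \(\eNuc(U)\). The goal is to show that this object is \(\rho(M)\) for the torsion complex
\[
M=\RHom_{D(R_I^\wedge)}(K,\rho^{\mR}x)[n]\cong \rho^{\mR}(x)\otimes K .
\]
Equivalently, the goal is that the counit \(\rho\rho^{\mR}(\rho(K)\otimes x)\to\rho(K)\otimes x\) is an isomorphism. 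Here \(\rho^{\mR}\) is continuous, and commutes with \(\rho(K)\otimes-\) because \(\rho(K)\) is dualizable: \(\rho^{\mR}(\rho(K)\otimes y)\cong K\otimes\rho^{\mR}(y)\) by the projection formula for dualizable objects.

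\textbf{Main step.} It suffices to show that \(\rho\) restricted to \(\ang{K}\) has essential image equal to \(\rho(K)\otimes\eNuc(U)\). By Proposition \ref{prop:descent_for_eNuc} and its affine analogue, the kernel of \(\eNuc(U)\to\eNuc(\text{open complement})\) is a smashing ideal generated by \(\rho(K)\). So it is enough to show that the full subcategory \(\rho(K)\otimes\eNuc(U)\) is generated under colimits by \(\rho(K)\) itself, i.e.\ that the right orthogonal of \(\rho(K)\) inside it vanishes.

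Take \(y=\rho(K)\otimes x\) with \(\Hom(\rho(K),y)=0\). Then \(\RHom(\rho(K),y)=0\), and by self-duality this says \(\rho(K)\otimes y=0\). But \(\rho(K)\otimes\rho(K)\cong\rho(K)^{\oplus 2^n}\)-type expressions are finite extensions containing \(\rho(K)\) as a retract, since \(K\otimes K\) has \(K\) as a summand up to shift. So \(y\) is a retract of \(\rho(K)\otimes y[\,\cdot\,]=0\), hence \(y=0\).

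Thus \(\rho(K)\otimes\eNuc(U)=\ang{\rho(K)}=\rho(\ang{K})\), the last equality because \(\rho\) preserves colimits and is fully faithful.

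\textbf{Expected obstacle.} The delicate point is the assertion that \(y\) is a retract of \(\rho(K)\otimes y\). I would verify this via \(K\otimes K\cong\bigoplus_{j}K[j]^{\binom nj}\) over \(R_I^\wedge\), which holds since \(a_i\) acts by zero on \(K\) up to homotopy. Then \(y=\rho(K)\otimes x\) is a summand of \(\rho(K\otimes K)\otimes x\). If this splitting causes trouble, an alternative is to use that the \(a_i\) act nilpotently on \(\rho(K)\otimes x\), and argue that \(\Hom(\rho(K),-)\) detects zero on \(I\)-nilpotent objects.
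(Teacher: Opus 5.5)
Your reduction to the compact generator \(K\) is fine; it is the paper's reduction with \(K\) in place of \(R_I^\wedge/I\). The gap is in the main step. There you pass from \(\Hom(\rho(K),y[i])=0\) for all \(i\) to \(\RHom(\rho(K),y)=0\). Since \(\rho(K)\) is dualizable, \(\sHom(\rho(K),y)\cong\sHom(1,\rho(K)^\vee\otimes y)\), so this step assumes that \(\sHom(1,-)=\rho^{\mR}\) detects zero objects.

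That holds in \(D(R)\), where the unit generates, but not in \(\eNuc(U)\). There \(\ang{1}=\rho(D(R_I^\wedge))\), which is a proper subcategory as soon as \(I\) is not nilpotent; Lemma \ref{lem:torsion_is_maximal_cg_localizing_ideal} exhibits an object outside it. Moreover, conservativity of \(\rho^{\mR}\) on objects of the form \(\rho(K)\otimes y\) is \emph{equivalent} to the lemma for \(F=K\). By your projection formula, the cofiber of the counit \(\rho\rho^{\mR}(\rho(K)\otimes x)\to\rho(K)\otimes x\) is \(\rho(K)\otimes c\) with \(c=\mathrm{cofib}(\rho\rho^{\mR}x\to x)\), and \(\rho^{\mR}\) kills it. So the argument is circular, and your fallback (that \(\Hom(\rho(K),-)\) detects zero on \(I\)-nilpotent objects) is the same statement again. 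A smaller point: \(\Spf(R_I^\wedge)\) has underlying space \(V(I)\), so there is no ``open complement''. The localizing ideal generated by \(\rho(K)\) is the smashing ideal with left idempotent \(\rho(\Gamma_I R_I^\wedge)\).

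You can see that a genuinely new input is needed by checking what your argument actually uses. It uses only that \(\rho\) is fully faithful, strongly continuous and symmetric monoidal, that \(\rho(K)\) is compact and dualizable, and that \(K\otimes K\) splits. All of this holds equally for \(D(\intg_p)\to D(\text{solid }\intg_p\text{-modules})\), where the unit is also compact. There the conclusion fails. With \(K=\Kos(\intg_p;p)\simeq\mathbf{F}_p\), one gets \(\rho(\mathbf{F}_p)\otimes\prod_{\nat}\intg_p\cong\prod_{\nat}\mathbf{F}_p\) with its profinite condensed structure, which is not in the image of discrete torsion modules.

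What distinguishes \(\eNuc(U)\) is nuclearity, and the paper uses it at exactly this point:
\begin{itemize}
\item The base change \(\eNuc(U)\otimes_{D(R_I^\wedge)}D(R_I^\wedge/I)\) embeds fully faithfully into \(\Ind(D(R_I^\wedge/I)^{\omega_1})\).
\item It lands in nuclear objects, because homomorphisms of compactly generated tt-\(\infty\)-categories preserve nuclearity.
\item \(\Nuc(\Ind(D(R_I^\wedge/I)^{\omega_1}))=D(R_I^\wedge/I)\), since \(D(R_I^\wedge/I)\) is already rigid.
\end{itemize}
Hence \(\rho(R_I^\wedge/I)\otimes x\) is a discrete \(R_I^\wedge/I\)-module. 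Your approach needs this, or an equivalent nuclearity statement, before the main step can go through.
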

\begin{proof}
  The localizing subcategory \(\tors{I}(R_I^\wedge)\subset D(R_I^\wedge)\) is generated by \(R_I^\wedge/I\).
  Thus we may assume that \(F=R_I^\wedge/I\).
  The inclusion \(\eNuc(U)\subset \Ind(\tors{I}(R)^{\omega_1})\) induces a fully faithful strongly continuous symmetric monoidal functor
  \begin{align*}
    \eNuc(U)\otimes_{D(R_I^\wedge)}D(R_I^\wedge/I)
    &\to \Ind(\tors{I}(R)^{\omega_1})\otimes_{D(R_I^\wedge)}D(R_I^\wedge/I)\\
    &\cong \Ind(\tors{I}(R)^{\omega_1}\otimes_{\Perf(R_I^\wedge)}\Perf(R_I^\wedge/I))
    \subset \Ind(D(R_I^\wedge/I)^{\omega_1}).
  \end{align*}
  Recall that nuclear modules are preserved by any homomorphism between compactly generated tt-\(\infty\)-categories.
  Since \(\Nuc(D(R_I^\wedge/I))\cong D(R_I^\wedge/I)\), the above functor factors through \(\Nuc(\Ind(D(R_I^\wedge/I)^{\omega_1}))=\eNuc(R_I^\wedge/I)=D(R_I^\wedge/I)\).
  This proves the assertion.
\end{proof}

\begin{lem}\label{lem:torsion_is_maximal_cg_localizing_ideal}
  Let \(U=\Spf(R_I^\wedge)\) be a Noetherian affine formal scheme, and let \(F\in D(R_I^\wedge)\).
  Assume that, for every \(x\in \csNuc(U)\), the object \(\rho(F)\otimes x\) lies in the essential image of \(\rho\).
  Then \(F\in \tors{I}(R_I^\wedge)\).
\end{lem}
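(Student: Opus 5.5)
Write \(A=R_I^\wedge\), a Noetherian \(I\)-adically complete ring. The plan is to test the hypothesis against one well-chosen nuclear object and reduce to a statement about modules.

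\textbf{Choice of test object.} The natural candidate is the countable product \(x=\prod_{\nat}A\), viewed as a compact projective generator of \(\complproj\text{-}\mathrm{Mod}\). It is nuclear, in fact an object of \(\csNuc(U)\), and it is not in the image of \(\rho\), since it is not a colimit of perfect \(A\)-modules in the relevant sense. On objects of \(\complproj\) the tensor product is \(\prod_A A\otimes\prod_B A\cong\prod_{A\times B}A\). Hence for \(F=\rho(P)\) with \(P\) perfect we get \(\rho(P)\otimes x\cong \prod_{\nat}P\), computed in the complete sense. Since \(\rho\) preserves colimits, for general \(F\) the object \(\rho(F)\otimes x\) is the colimit of these along a presentation of \(F\) by perfects.

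\textbf{Reduction to the local case.} The torsion condition can be checked on the classes \(H^i(F)\) or, better, after localization. The complement \(\Spec A\setminus V(I)\) is where non-torsion lives. So \(F\) is torsion iff \(F\otimes_A A_{\mathfrak p}=0\) for every \(\mathfrak p\notin V(I)\), or equivalently iff \(F\otimes_A D(\Spec A\setminus V(I))=0\). I would try to show the contrapositive: if \(F\) is not torsion, then \(\rho(F)\otimes x\) is not in the image of \(\rho\).

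\textbf{Key computation and obstacle.} One needs a functor detecting the image of \(\rho\). A good candidate is the right adjoint \(\rho^\mR\). An object \(y\) lies in the image iff the counit \(\rho\rho^\mR y\to y\) is an isomorphism. Since \(\rho\) is strongly continuous and \(\csNuc(U)\) is compactly generated by \(\Perf(A)\), \(\rho\) is an equivalence onto \(\ang{\Perf(A)}\). So \(\rho^\mR y\) is computed by \(\RHom(A,y)\), the underlying \(A\)-module.

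I would then compare the underlying module of \(\rho(F)\otimes \prod_\nat A\) with \(F\otimes_A\prod_\nat A\). For finitely presented pieces they agree, and for \(F=A[1/f]\)-type colimits they differ. This is the main obstacle: one must show that for \(f\in I\)-adically non-nilpotent directions the ``nuclear'' tensor product \(\colim(\prod_\nat A\xrightarrow{f}\prod_\nat A\to\cdots)\) is not equivalent to the image of its underlying module, i.e., the map from \(\rho(\text{underlying})\) fails to be an isomorphism. I would first try to detect this after applying the nuclear tensor functor to \(A/I\) (as in Lemma~\ref{lem:torsion_is_localizing_ideal}) or, better, after inverting an element \(f\in I\). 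Inverting \(f\) kills \(\rho\) of torsion parts but not the solid object \(\prod_\nat A[1/f]\)-type colimit.

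Concretely, I would reduce via dévissage to \(F\) with some localization \(F[1/f]\neq0\), \(f\in I\). I would then use the dualizability of \(x\) to compute \(\Hom\) from compact objects and show that the counit fails on \(\Hom(A,-)\)-cofibers. If the direct computation is intractable, the fallback is to take \(x\) to be a nuclear object supported at \(\mathfrak p\notin V(I)\), such as \(\prod_\nat A_{\mathfrak p}\)-completions, and argue via \(\Spec\) support that \(\rho(F)\otimes x\neq0\) lies outside \(\ang{\Perf(A)}\).
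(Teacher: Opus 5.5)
Your overall skeleton (argue by contrapositive via the counit of \(\rho\dashv\sigma\), where \(\sigma\) is the right adjoint, then use a prime \(\Frak{p}\not\supset I\) and an element \(f\in I\setminus\Frak{p}\)) is the right one. However, there is a genuine gap at the first concrete step: \(\prod_{\nat}A\) is not nuclear. It is compact projective in \(\complproj\text{-}\mathrm{Mod}\). If it lay in \(\csNuc(U)\) it would be compact there, hence isomorphic to \(\rho(Q)\) with \(Q\in\Perf(A)\). Its underlying module \(\prod_{\nat}A\) would then be perfect, which it is not. So the hypothesis of the lemma says nothing about \(\rho(F)\otimes\prod_{\nat}A\).

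Worse, this object cannot separate torsion from non-torsion. Apply \(\sHom(\prod_{\nat}A,-)\) to the counit for \(\rho(A/I)\otimes\prod_{\nat}A\). You get the inclusion of \(\nat\times\nat\)-matrices over \(A/I\) with finitely many nonzero columns into matrices whose rows have finite support, and this misses the identity. So \(\rho(A/I)\otimes\prod_{\nat}A\) is not in the image of \(\rho\), although \(A/I\) is torsion and torsion objects do satisfy the hypothesis (Lemma~\ref{lem:torsion_is_localizing_ideal}). No argument built on this test object can be the place where non-torsion enters.

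Relatedly, \(\csNuc(U)\) is not compactly generated by \(\Perf(A)\); it is only dualizable. If it were compactly generated, \(\rho\) would be an equivalence, \(F=A\) would satisfy the hypothesis, and the lemma would be false whenever \(I\) is not nilpotent.

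The second gap is the detecting functor. The underlying module never sees the failure. Since the unit is compact, \(\sigma(\rho(F)\otimes x)\cong F\otimes_A\sigma(x)\) for every \(F\), so the underlying modules do not differ for \(F=A[1/f]\), contrary to what you write. Moreover \(\sigma(\varepsilon)\) is always an isomorphism because \(\rho\) is fully faithful, so the counit \(\varepsilon\) can never fail ``on \(\Hom(A,-)\)''. Support arguments are equally useless, since the image of \(\rho\) contains nonzero objects of every support.

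What is missing is the following. Take the nuclear generator \(G=\RHom(\prod_{\nat}A,A)\), i.e.\ the module \(M=\lim_n\bigoplus_{\nat}A/I^n\) of null sequences; it generates \(\csNuc(U)\) by Efimov. Test the counit \(\rho\sigma(\rho(F)\otimes G)\to\rho(F)\otimes G\) against the compact projective \(\prod_{\nat}A\) of the ambient category, i.e.\ apply \(\sHom(\prod_{\nat}A,-)\). This yields the map \(F\otimes_A M\otimes_A M\to F\otimes_A\lim_n\bigoplus_{\nat\times\nat}A/I^n\).

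If \(F\) is not torsion, choose \(\Frak{p}\not\supset I\) with \(F\otimes\kappa(\Frak{p})\neq0\), reduce to \(F=\kappa(\Frak{p})\), and take \(f\in I\setminus\Frak{p}\). The diagonal matrix \((f,f^2,\ldots)\) is a null sequence because \(f\in I\). It has infinite rank over \(\kappa(\Frak{p})\) because \(f\notin\Frak{p}\), whereas the source consists only of finite-rank matrices. That is exactly where non-torsion is used, and your proposal never reaches a step at which it could be.
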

\begin{proof}
  Assume that \(F\notin \tors{I}(R_I^\wedge)\).
The category \(\csNuc(U)\), regarded as a localizing subcategory of \(D(\complproj\text{-}\mathrm{Mod})\), is generated by \(G\coloneqq \RHom_{D(\complproj\text{-}\mathrm{Mod})}\left(\prod_{\nat} R_I^\wedge, R_I^\wedge\right)\) as a localizing subcategory \cite[Proposition 7.4]{Efimov2025LocalizingInvariantsInverseLimits}.
Hence, it suffices to prove that \(\rho(F)\otimes G\) does not lie in the essential image of \(\rho\).

For \(x\in \csNuc(U)\), the mapping spectrum \(\sHom(R_I^\wedge,x)\) is naturally an \(\End(R_I^\wedge)^\op=R_I^\wedge\)-module.
This construction defines a continuous functor \(\sigma\colon \csNuc(U)\to D(R_I^\wedge)\).
This functor is right adjoint to \(\rho\).
Thus, it suffices to prove that the counit map
\(\varepsilon\colon \rho\sigma(\rho(F)\otimes G)\to \rho(F)\otimes G\)
is not an equivalence.
We will compute \(\sigma\RHom\left(\prod_{\nat} R_I^\wedge,\varepsilon\right)\).

  Note that, for \(H\in D(R_I^\wedge)\), \(x\in \csNuc(U)\), and any set \(A\), there is an equivalence
  \[
  \sigma\RHom(\prod_A R_I^\wedge, \rho(H)\otimes x)\cong H\otimes \sigma\RHom(\prod_{A\times \nat}R_I^\wedge, x).
  \]
  This is obvious for \(H=R_I^\wedge\).
  Since \(\prod_A R_I^\wedge\) is compact, both sides preserve colimits in \(H\).
  Hence the equivalence holds for every \(H\in \ang{R_I^\wedge}=D(R_I^\wedge)\).

  Therefore,
  \begin{align*}
    \sigma\RHom(\prod_A R_I^\wedge, \rho(F)\otimes G)
    \cong F\otimes \sigma\RHom(\prod_{A\times \nat}R_I^\wedge, G)
    \cong F\otimes \lim_{n\in \nat}(\bigoplus_{A\times \nat} R/I^n).
  \end{align*}
  \begin{align*}
    \sigma\RHom(\prod_\nat R_I^\wedge,\rho \sigma(\rho(F)\otimes G))
    &\cong F\otimes \sigma\RHom(\prod_\nat R_I^\wedge,\rho \sigma(G))\\
    &\cong F\otimes \sigma\RHom(\prod_\nat R_I^\wedge,\rho(\lim_{n\in \nat}(\bigoplus_{\nat} R/I^n)))\\
    &\cong F\otimes (\lim_{n\in \nat}(\bigoplus_{\nat} R/I^n))\otimes \sigma\RHom(\prod_\nat R_I^\wedge,R_I^\wedge)\\
    &\cong F\otimes (\lim_{n\in \nat}(\bigoplus_{\nat} R/I^n))\otimes (\lim_{n\in \nat}(\bigoplus_{\nat} R/I^n)).
  \end{align*}
  Thus \(\sigma\RHom(\prod_\nat R_I^\wedge,\varepsilon)\) is the map
  \[
  F\otimes (\lim_{n\in \nat}(\bigoplus_{\nat} R/I^n))\otimes (\lim_{n\in \nat}(\bigoplus_{\nat} R/I^n))\to F\otimes \lim_{n\in \nat}(\bigoplus_{\nat\times \nat} R/I^n).
  \]
  Since \(F\notin \tors{I}(R_I^\wedge)\), there exists a prime ideal \(\Frak{p}\subset R_I^\wedge\) such that \(I\not\subset \Frak{p}\) and \(F\otimes \kappa(\Frak{p})\neq0\), where \(\kappa(\Frak{p})\) is the residue field.
  We may replace \(F\) by \(F\otimes \kappa(\Frak{p})\).
  Since \(F\otimes \kappa(\Frak{p})\) is a direct sum of shifts of \(\kappa(\Frak{p})\), we may further replace it by \(\kappa(\Frak{p})\).
  Then both sides are concentrated in degree \(0\) (see, for example, \cite[Tag 06LE]{stacks-project}), and they are identified with submodules of \(\prod_{\nat\times \nat}\kappa(\Frak{p})\).
  Choose \(f\in I\setminus \Frak{p}\).
  The diagonal matrix \((f,f^2,f^3,\ldots)\) in \(\prod_{\nat\times \nat}\kappa(\Frak{p})\) lies in \(\kappa(\Frak{p})\otimes \lim_{n\in \nat}(\bigoplus_{\nat\times \nat} R/I^n).\)
  On the other hand, it does not lie in \(\kappa(\Frak{p})\otimes(\lim_{n\in \nat}(\bigoplus_{\nat} R/I^n))\otimes (\lim_{n\in \nat}(\bigoplus_{\nat} R/I^n)),\) since it has infinite rank.
  Thus \(\varepsilon\) is not an equivalence.
\end{proof}

\begin{thm}\label{thm:mcg_is_torsions}
  Let \(\Frak{X}\) be a Noetherian formal scheme.
  Then \(\eNuc(\Frak{X})^\mcg\cong \Dtors(\Frak{X})\).
  Moreover, \(\Dtors(\Frak{X})\) is a strongly compactly generated localizing ideal of \(\csNuc(\Frak{X})\).
  If there exists an affine open covering \(\{U_i\}_i\) of \(\Frak{X}\) such that each restriction functor \(\csNuc(\Frak{X})\to \csNuc(U_i)\) is a quotient functor, then \(\csNuc(\Frak{X})^\mcg\cong \Dtors(\Frak{X})\).
\end{thm}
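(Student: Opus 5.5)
The proof splits into two parts. First we show that \(\Dtors(\Frak{X})\) is a strongly compactly generated localizing ideal of both \(\csNuc(\Frak{X})\) and \(\eNuc(\Frak{X})\). Then we show it is maximal, which is where Lemma \ref{lem:torsion_is_maximal_cg_localizing_ideal} and the covering hypothesis enter.

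\textbf{Step 1: \(\Dtors(\Frak{X})\) is a strongly compactly generated localizing ideal.} We work in \(\eNuc(\Frak{X})\), and the same argument applies to \(\csNuc(\Frak{X})\subset \eNuc(\Frak{X})\).

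Compact generation: \(\Dtors(\Frak{X})\) is finitely compactly generated as a category. The functor \(\Dtors(\Frak{X})\to\eNuc(\Frak{X})\) is glued from the functors \(\rho\), which are strongly continuous. Hence it should be strongly continuous and preserve compact objects; this can be checked locally using Proposition \ref{prop:descent_for_eNuc}. The compact generators of \(\Dtors(\Frak{X})\) are therefore compact in \(\eNuc(\Frak{X})\). For the affine case, note that \(\Kos(R;a_1,\dots,a_n)\) is perfect over \(R_I^\wedge\), hence compact in \(\eNuc(U)\).

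Ideal property: Lemma \ref{lem:torsion_is_localizing_ideal} gives it on each affine \(U\). To globalize, take \(x\in\eNuc(\Frak{X})\) and \(F\in\Dtors(\Frak{X})\). By Zariski descent, an object of \(\eNuc(\Frak{X})\) lies in the image of \(\Dtors(\Frak{X})\) if and only if all of its restrictions to the opens of an affine cover do. This holds because \(\Dtors\) also satisfies descent and the inclusion is compatible with restriction.

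Conclusion of Step 1: \(\Dtors(\Frak{X})\subset \eNuc(\Frak{X})^\mcg\), and likewise \(\Dtors(\Frak{X})\subset\csNuc(\Frak{X})^\mcg\).

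\textbf{Step 2: maximality.} Let \(J\) be a strongly compactly generated localizing ideal. It suffices to show that every compact generator \(c\in J\) lies in \(\Dtors(\Frak{X})\).

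Restrict to an affine \(U_i=\Spf(R_I^\wedge)\). The restriction functor is a quotient functor with smashing kernel: for \(\eNuc\) this is Proposition \ref{prop:descent_for_eNuc}, and for \(\csNuc\) it is the hypothesis. So it is strongly continuous and essentially surjective up to the quotient. Consequently the image \(c|_{U_i}\) is compact, and the localizing ideal generated by the images of compact generators of \(J\) is still strongly compactly generated. This uses that the restriction of \(J\) generates an ideal: every object of \(\eNuc(U_i)\) comes from \(\eNuc(\Frak{X})\) via the fully faithful right adjoint, and the restriction is monoidal.

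Thus we reduce to the affine case. There, compact objects are \(\Perf(R_I^\wedge)\), so \(c|_{U_i}=\rho(F)\) for some perfect \(F\), and \(\rho(F)\otimes x\) lies in the ideal generated by compacts of the form \(\rho(-)\) for all \(x\). This ideal is contained in the essential image of \(\rho\), which is localizing because \(\rho\) is fully faithful and continuous.

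For \(\csNuc(U)\), Lemma \ref{lem:torsion_is_maximal_cg_localizing_ideal} then gives \(F\in\tors{I}(R_I^\wedge)\). For \(\eNuc(U)\) we restrict \(x\) to \(\csNuc(U)\subset\eNuc(U)\), which is fully faithful, monoidal and compatible with \(\rho\), and apply the same lemma. Since \(c\) is torsion on every \(U_i\), descent gives \(c\in\Dtors(\Frak{X})\).

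\textbf{Main obstacle.} The key point is the reduction from \(\Frak{X}\) to the affine pieces. We must verify that the restriction of a strongly compactly generated ideal still satisfies the hypothesis of Lemma \ref{lem:torsion_is_maximal_cg_localizing_ideal}, namely that \(\rho(F)\otimes x\) is in the image of \(\rho\) for all \(x\in\csNuc(U_i)\). This is exactly where the quotient property of the restrictions is needed, and it is why the statement for \(\csNuc\) carries the extra covering hypothesis.
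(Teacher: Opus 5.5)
Your proposal follows the paper's proof. You globalize Lemma \ref{lem:torsion_is_localizing_ideal} to get the ideal property, and for maximality you restrict a strongly compactly generated ideal to affine opens; there it is a localizing ideal by essential surjectivity of the quotient functor, it lies in the essential image of \(\rho\), and Lemma \ref{lem:torsion_is_maximal_cg_localizing_ideal} together with \(\csNuc(U)\subset\eNuc(U)\) forces it into \(\Dtors(U)\).

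One small inaccuracy: for \(\csNuc\) the theorem only assumes the restrictions \(\csNuc(\Frak{X})\to\csNuc(U_i)\) are quotient functors, not that their kernels are smashing. So the compactness of \(c|_{U_i}\), which you derive from strong continuity, is not given by the hypothesis as you state it; the paper's proof uses this same step without comment.
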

\begin{proof}
  By Lemma \ref{lem:torsion_is_localizing_ideal}, \(\Dtors(\Frak{X})\) is a strongly compactly generated localizing ideal of \(\eNuc(\Frak{X})\).
  Hence it is also a strongly compactly generated localizing ideal of \(\csNuc(\Frak{X})\).
  We will prove that it is maximal.
  Let \(K\subset \eNuc(\Frak{X})\) be a strongly compactly generated localizing ideal.
  For an open subset \(U\subset\Frak{X}\), let \(K_U\) be the localizing subcategory generated by the image of \(K\) in \(\eNuc(U)\).
  Then \(K_U\) is compactly generated as a localizing subcategory.
  Since \(\eNuc(\Frak{X})\to\eNuc(U)\) is a quotient functor, \(K_U\) is automatically a localizing ideal.
  Suppose, for contradiction, that \(K\not\subset \Dtors(\Frak{X})\).
  Then there exists an affine open subset \(U\cong \Spf(R_I^\wedge)\subset \Frak{X}\) such that \(K_U\not\subset\Dtors(U)\).
  Since \(K_U\) is strongly compactly generated, \(K_U\) lies in the essential image of \(\rho:D(R_I^\wedge)\to \eNuc(U)\).
  Since \(\csNuc(U)\subset \eNuc(U)\), Lemma \ref{lem:torsion_is_maximal_cg_localizing_ideal} implies that \(K_U\subset \Dtors(U)\), a contradiction.
  Thus \(\eNuc(\Frak{X})^\mcg\cong \Dtors(\Frak{X})\).

  The proof for \(\csNuc(\Frak{X})\) is the same.
\end{proof}

We will use the following lemma later.

\begin{lem}\label{lem:quotient_of_nuclear_modules}
  Let \(U=\Spf(A_I^\wedge)\) be a Noetherian affine formal scheme, and let \(A\to B\) be a homomorphism of rings which factors through \(A/I^m\) for some \(m\ge1\).
  Then
  \[
  \csNuc(U)\otimes_{D(A_I^\wedge)}D(B)\cong \eNuc(U)\otimes_{D(A_I^\wedge)}D(B)\cong D(B).
  \]
\end{lem}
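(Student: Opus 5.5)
We first reduce to the case \(B=A/I^m\). Since \(A\to B\) factors through \(A/I^m\), and \(D(A/I^m)\otimes_{D(A/I^m)}D(B)\cong D(B)\), associativity of relative tensor products in \(\pr\) gives
\[
\mathcal{C}\otimes_{D(A_I^\wedge)}D(B)\cong \bigl(\mathcal{C}\otimes_{D(A_I^\wedge)}D(A/I^m)\bigr)\otimes_{D(A/I^m)}D(B).
\]
Here \(A/I^m\cong A_I^\wedge/I^m\), by Noetherianity. So it suffices to show that \(\mathcal{C}\otimes_{D(A_I^\wedge)}D(A_I^\wedge/I^m)\cong D(A_I^\wedge/I^m)\) for \(\mathcal{C}=\eNuc(U)\) and \(\mathcal{C}=\csNuc(U)\).

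For \(\eNuc(U)\) we argue as in the proof of Lemma \ref{lem:torsion_is_localizing_ideal}, with \(I\) replaced by \(I^m\). The inclusion \(\eNuc(U)\subset\Ind(\tors{I}(A)^{\omega_1})\) is strongly continuous, fully faithful and symmetric monoidal. Base change along the smashing-type algebra \(D(A_I^\wedge)\to D(A_I^\wedge/I^m)\) preserves these properties: \(D(A_I^\wedge/I^m)\) is dualizable over \(D(A_I^\wedge)\) because it is modules over a perfect algebra. The base change lands in \(\Ind(D(A_I^\wedge/I^m)^{\omega_1})\), and by preservation of nuclear objects it factors through \(\Nuc(\Ind(D(A_I^\wedge/I^m)^{\omega_1}))=D(A_I^\wedge/I^m)\). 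This yields a fully faithful functor \(\eNuc(U)\otimes_{D(A_I^\wedge)}D(A_I^\wedge/I^m)\to D(A_I^\wedge/I^m)\).

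It remains to show essential surjectivity. The composite \(D(A_I^\wedge)\xrightarrow{\rho}\eNuc(U)\) tensored down to \(D(A_I^\wedge/I^m)\) is the identity. Hence the generator \(A_I^\wedge/I^m\) lies in the image, and since the image is a localizing subcategory, the functor is essentially surjective.

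For \(\csNuc(U)\) we use the fully faithful strongly continuous symmetric monoidal functors
\[
D(A_I^\wedge)\xrightarrow{\rho}\csNuc(U)\to\eNuc(U).
\]
After applying \(-\otimes_{D(A_I^\wedge)}D(A_I^\wedge/I^m)\), the composite becomes the identity of \(D(A_I^\wedge/I^m)\) by the previous paragraph. Base change preserves full faithfulness of strongly continuous functors between dualizable categories, and \(\csNuc(U)\) is dualizable, being compactly generated by \cite[Proposition 7.4]{Efimov2025LocalizingInvariantsInverseLimits}. So both base-changed functors are fully faithful, and the second one is also essentially surjective because the composite is. This gives the equivalences for both categories.

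The main obstacle is justifying that base change along \(D(A_I^\wedge)\to D(A_I^\wedge/I^m)\) preserves fully faithful strongly continuous functors. We would handle it by writing
\[
-\otimes_{D(A_I^\wedge)}D(A_I^\wedge/I^m)\cong \mathrm{Mod}_{A_I^\wedge/I^m}(-),
\]
that is, modules over the commutative algebra \(\rho(A_I^\wedge/I^m)\) in each category. This identification holds because the tensor product with the algebra object is computed internally. Fully faithful colimit-preserving symmetric monoidal functors induce fully faithful functors on module categories, since mapping spaces of free modules reduce to those in the original categories and colimits then extend this to all modules.
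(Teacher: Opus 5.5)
\textbf{There is a genuine gap: the full faithfulness of \(\eNuc(U)\otimes_{D(A_I^\wedge)}D(A/I^m)\to D(A/I^m)\) is not established.} The other parts of your argument are fine: the reduction to \(B=A/I^m\), the essential-surjectivity argument via \(\rho\), and the sandwich \(D(A_I^\wedge)\to\csNuc(U)\to\eNuc(U)\) for the \(\csNuc\) case. The identification \(\mathcal{C}\otimes_{D(A_I^\wedge)}D(A/I^m)\simeq\mathrm{Mod}_{A/I^m}(\mathcal{C})\) does show that base change preserves fully faithful \(D(A_I^\wedge)\)-linear functors, for any algebra. So your other justifications are unnecessary, which is fortunate because they are wrong. \(A/I^m\) is neither idempotent nor in general perfect over \(A_I^\wedge\). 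Also \(\csNuc(U)\) is not compactly generated: its compact objects \(\Perf(A_I^\wedge)\) only generate \(\rho(D(A_I^\wedge))\).

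You obtain full faithfulness by composing with \(\Ind(\tors{I}(A)^{\omega_1})\otimes_{D(A_I^\wedge)}D(A/I^m)\to\Ind(D(A/I^m)^{\omega_1})\) and tacitly treating this functor as fully faithful (``lands in''). That holds if \(A/I^m\) is a perfect \(A_I^\wedge\)-module, but not in general. Write \(y\) for the Yoneda embedding. Then \(A/I^m\) goes to a formal sequential colimit \(\colim_n y(P_n\otimes 1)\) of perfect approximations \(P_n\). Maps out of an \(\omega_1\)-compact but non-compact object of \(\tors{I}(A)\) do not commute with this colimit.

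For instance, take \(A=k[x]/(x^2)\), \(I=(x)\), \(m=1\), so \(\tors{I}(A)=D(A)\). Let \(P_n\) be the truncations of the resolution \(\cdots\xrightarrow{x}A\xrightarrow{x}A\) of \(k\).
\begin{itemize}
\item In the source, the free \(k\)-module on \(y(k)\) has endomorphism group \(\colim_n\Hom_{D(A)}(k,P_n\otimes_A k)=\bigoplus_{i\ge0}\mathrm{Ext}^i_A(k,k)\).
\item Its image \(y(k\otimes^{L}_A k)\in\Ind(D(k)^{\omega_1})\) has endomorphism group \(\Hom_{D(A)}(k,k\otimes^{L}_A k)=\prod_{i\ge0}\mathrm{Ext}^i_A(k,k)\). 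This is because \(k\otimes^{L}_A k\simeq\bigoplus_i k[i]\simeq\prod_i k[i]\) as \(A\)-modules.
\end{itemize}
All these Ext groups are \(k\), so the functor is not full. The witnesses are not nuclear, so the full faithfulness you need on the image of \(\eNuc(U)\otimes_{D(A_I^\wedge)}D(A/I^m)\) may still hold, but your argument gives no reason for it. The same inclusion is asserted, for \(m=1\), in the proof of Lemma \ref{lem:torsion_is_localizing_ideal}. The paper's proof of the present lemma does not rerun that argument; it only uses its conclusion.

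The missing input is the ideal property of torsion objects. The paper's argument runs as follows.
\begin{itemize}
\item \(\tors{I}(A)\) is a strongly compactly generated localizing ideal of \(\eNuc(U)\) by Lemma \ref{lem:torsion_is_localizing_ideal}, hence smashing by Lemma \ref{lem:mcg_is_smashing_ideal}.
\item \(D(B)\) is an algebra over the idempotent \(\tors{I}(A)\), because \(B\) is \(I\)-torsion.
\item Therefore
\[
\eNuc(U)\otimes_{D(A_I^\wedge)}D(B)\cong\bigl(\eNuc(U)\otimes_{D(A_I^\wedge)}\tors{I}(A)\bigr)\otimes_{\tors{I}(A)}D(B)\cong\tors{I}(A)\otimes_{\tors{I}(A)}D(B)\cong D(B),
\]
and likewise for \(\csNuc(U)\), with no reduction to \(B=A/I^m\) needed.
\end{itemize}

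Alternatively, you can keep your framework and replace your second paragraph as follows. By Lemma \ref{lem:torsion_is_localizing_ideal}, every \(A/I^m\)-module \(M\) in \(\eNuc(U)\) is a retract of \(\rho(A/I^m)\otimes M\in\rho(\tors{I}(A))\). Since \(\rho\) is fully faithful and symmetric monoidal, this gives \(\mathrm{Mod}_{A/I^m}(\eNuc(U))\simeq\mathrm{Mod}_{A/I^m}(D(A_I^\wedge))=D(A/I^m)\). With that substitution, the rest of your proof goes through, including the \(\csNuc\) sandwich.
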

\begin{proof}
  Since \(\tors{I}(A)\) is a smashing ideal of \(\eNuc(U)\), we have
  \[
  \eNuc(U)\otimes_{D(A_I^\wedge)}D(B)\cong \eNuc(U)\otimes_{D(A_I^\wedge)}\tors{I}(A)\otimes_{\tors{I}(A)}D(B)\cong \tors{I}(A)\otimes_{\tors{I}(A)}D(B)\cong D(B).
  \]
  The same argument applies to \(\csNuc(U)\).
\end{proof}

\section{Balmer spectrum of a weakly compactly generated tensor triangulated category}

In this section, we introduce Balmer's tensor triangular spectrum of a tt-category, as defined in \cite{BalmerSpectrumPrimeIdeals}.
We will apply this construction to the derived category of torsion modules on a formal scheme in Theorem \ref{thm:Spec_of_Dtors}.
However, the unit object of this category is not compact, so we need a slight modification.
Roughly speaking, our spectrum gives a spectrum of localizing ideals of a compactly generated tt-category.

\begin{defn}
  We say that a tt-category \(T\) is weakly compactly generated if it is compactly generated as a localizing subcategory of itself and \(T^\omega\otimes T^\omega\subset T^\omega\).
  If, moreover, \(1_T\) is compact, then \(T\) is called a compactly generated tt-category.
\end{defn}
Every weakly compactly generated localizing ideal of a weakly compactly generated tt-category is strongly compactly generated.
Moreover, compactly generated localizing ideals are in bijection with thick tensor ideals of \(T^\omega\), that is, thick subcategories \(I\subset T^\omega\) such that, for any \(a\in T^\omega\) and \(b\in I\), one has \(a\otimes b\in I\).

In this section, we assume that \(T\) is a weakly compactly generated tt-category.

\begin{defn}
  Let \(I\subset T\) be a compactly generated localizing ideal.
  We say that \(I\) is prime if \(I\neq T\) and \(a\otimes b\notin I\) for all \(a,b\in T^\omega\setminus I^\omega\).
\end{defn}

Let \(\Spc(T)\) be the set of prime ideals.
We define a topology on this set as follows.
The support of a subset \(S\subset T\) is defined by
\[
\Supp(S)=\{P\in\Spc(T)\mid S\not\subset P\}.
\]
The closed subsets of \(\Spc(T)\) are generated by \(\Supp(a)\) for \(a\in T^\omega\).

\begin{defn}\label{defn:presheaf_of_triangulated_categories}
For a subset \(U\subset \Spc(T)\), we set
\[
I_U=\ang{a\in T^\omega\mid \Supp(a)\cap U=\emptyset},\quad T_U=\Idem(T/I_U).
\]
Here \(\Idem\) denotes idempotent completion.
\end{defn}

The subcategory \(I_U\) is a compactly generated localizing ideal, and hence \(T_U\) has a symmetric monoidal structure.
Let \(\sO_T\) be the sheaf of commutative rings on \(\Spc(T)\) associated with the presheaf
\[
U\mapsto \End(1_{T_U}).
\]

With our formalism, we can rephrase the definition of the Balmer spectrum in \cite{BalmerSpectrumPrimeIdeals} as follows.

\begin{defn}
The ringed space
\[
\Spec(T)=(\Spc(T),\sO_T)
\]
is called the Balmer spectrum of a weakly compactly generated tt-category \(T\).
\end{defn}
In fact, it is a locally ringed space.
The proof is the same as that for the usual Balmer spectrum, so we omit it.

\begin{rem}\label{rem:spcetrum_of_compactly_generated_tt-category}
If \(T\) is a compactly generated tt-category, then our spectrum coincides with the usual Balmer spectrum of \(T^\omega\).
\end{rem}

This definition satisfies the following functoriality.

\begin{defn}
Let \(T,T'\) be weakly compactly generated tt-categories.
A homomorphism \(F:T\to T'\) of tt-categories is called cg-preserving if, for every \(a\in T^\omega\), the localizing ideal \(\oang{F(a)}\) is weakly finitely compactly generated.
\end{defn}

Every compact-object-preserving homomorphism is cg-preserving.
In particular, every equivalence is cg-preserving.

Let \(\ideals\) be the \((2,1)\)-category of weakly compactly generated tt-categories, cg-preserving homomorphisms, and natural isomorphisms.
Let \(\LocSpaces\) be the category of locally ringed spaces.

\begin{prop}
  Let \(F:T\to T'\) be a cg-preserving homomorphism of weakly compactly generated tt-categories.
  Then there is a natural morphism of ringed spaces
  \[
    \Spec(T')\to \Spec(T):Q\mapsto\ang{\{a\in T^\omega\mid F(a)\in Q\}}.
  \]
  Moreover, this construction defines a contravariant functor
  \[
    \Spec:(\ideals)^\op\to \LocSpaces.
  \]
\end{prop}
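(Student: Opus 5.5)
We reduce to Balmer's classical functoriality by passing to the tensor triangulated categories of compact objects, with the cg-preserving hypothesis replacing compactness of \(F(a)\). For a weakly compactly generated tt-category \(T\), compactly generated localizing ideals correspond bijectively to thick tensor ideals of \(T^\omega\), via \(I\mapsto I^\omega=I\cap T^\omega\) and \(J\mapsto\ang{J}\). Under this bijection, \(\Spc(T)\) becomes the set of prime thick tensor ideals of the (possibly non-unital) tensor triangulated category \(T^\omega\). The topology is the one generated by the closed sets \(\Supp(a)\) for \(a\in T^\omega\).

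\textbf{Step 1: the map on points is well defined.} Given \(Q\in\Spc(T')\), set \(J=\{a\in T^\omega\mid F(a)\in Q\}\). Since \(F\) is exact, symmetric monoidal and \(Q\) is a localizing ideal, \(J\) is a thick tensor ideal of \(T^\omega\), so \(\ang{J}\) is a compactly generated localizing ideal with \(\ang{J}^\omega=J\).

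Primality needs care because \(F(a)\) need not be compact. Here we use cg-preservation: \(\oang{F(a)}=\oang{c_1,\dots,c_n}\) with the \(c_i\in T'^\omega\). Replacing the \(c_i\) by \(c=\bigoplus c_i\), we get \(F(a)\in Q\) if and only if \(c\in Q\).

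Now suppose \(a,b\notin J\). Let \(c_a,c_b\) be the corresponding compact generators, so \(c_a,c_b\notin Q\). We claim
\[
\oang{F(a\otimes b)}=\oang{F(a)}\otimes\oang{F(b)}\ni c_a\otimes c_b .
\]
The inclusion \(\oang{x}\otimes\oang{y}\subset\oang{x\otimes y}\) follows by the usual two-step localizing-subcategory argument, so the claim holds. Primality of \(Q\) gives \(c_a\otimes c_b\notin Q\), hence \(F(a\otimes b)\notin Q\). The condition \(J\neq T^\omega\) comes from the same argument: \(Q\) is proper and contains no \(c_a\) whenever \(a\notin J\).

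\textbf{Step 2: continuity and the sheaf map.} The continuous map \(\varphi\) satisfies
\[
\varphi^{-1}(\Supp(a))=\Supp(F(a))=\Supp(c_a)
\]
for the compact \(c_a\) chosen above, which is closed; hence \(\varphi\) is continuous.

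For an open set \(U\subset\Spc(T)\), write \(V=\varphi^{-1}(U)\). We show \(F(I_U)\subset I'_V\). If \(\Supp(a)\cap U=\emptyset\), then \(\Supp(c_a)\cap V=\emptyset\), so \(F(a)\in\oang{c_a}\subset I'_V\). Hence \(F\) induces a symmetric monoidal functor \(T/I_U\to T'/I'_V\), and therefore a functor \(T_U\to T'_V\) on idempotent completions. This gives a ring map \(\End(1_{T_U})\to\End(1_{T'_V})\), compatible with restrictions, and hence a morphism of presheaves. Sheafifying yields \(\sO_T\to\varphi_*\sO_{T'}\).

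\textbf{Step 3: locality and functoriality.} Locality on stalks follows Balmer's argument: a germ \(f\) at \(\varphi(Q)\) is invertible exactly when its cone \(\mathrm{cone}(f)\) lies outside \(\varphi(Q)\), and the induced map reflects this. Functoriality \((G\circ F)^*=F^*\circ G^*\) and invariance under natural isomorphisms are formal. Compositions of cg-preserving functors are cg-preserving, since \(\oang{GF(a)}=\oang{G(c_a)}\) is again finitely compactly generated, so \(\Spec\) is a functor.

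The main obstacle is Step 1, specifically the identity \(\oang{x\otimes y}=\oang{x}\otimes\oang{y}\) in the non-unital, non-compact setting. The locality check in Step 3 should be verified against the non-compact unit, but its argument is identical to Balmer's.
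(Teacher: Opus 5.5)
Your route is the paper's: you pick a compact generator \(c_a\) of \(\oang{F(a)}\), use \(F(a)\in Q\iff c_a\in Q\) for primality and for \(\varphi^{-1}(\Supp(a))=\Supp(c_a)\), and get the sheaf map from the factorization \(T_U\to T'_V\). Your primality argument is correct. Note that it only uses the inclusion \(c_a\otimes c_b\in\oang{F(a)\otimes F(b)}\), and this is exactly what your two-step argument proves. The displayed equality \(\oang{F(a\otimes b)}=\oang{F(a)}\otimes\oang{F(b)}\) is stronger than what you prove and is not needed, so it is not the real obstacle. The paper phrases the step as ``\(c_a\otimes c_b\) is a compact generator of \(\oang{F(a\otimes b)}\)'', which is your inclusion applied in both directions. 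Your check that cg-preserving functors compose is a useful addition that the paper leaves implicit.

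\textbf{The gap: properness of \(\ang{J}\).} The sentence ``\(Q\) is proper and contains no \(c_a\) whenever \(a\notin J\)'' does not exhibit any \(a\in T^\omega\) with \(F(a)\notin Q\). Balmer's usual choice \(a=1_T\) is unavailable here, because the unit need not be compact. The missing idea, which the paper uses, is this: \(T=\ang{T^\omega}\) and \(F\) is continuous and unital. So if \(F(a)\in Q\) for all \(a\in T^\omega\), then \(1_{T'}\cong F(1_T)\in\ang{F(T^\omega)}\subset Q\), which forces \(Q=T'\). This is precisely the point in Step 1 where the non-compact unit matters.

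\textbf{Step 3.} Your criterion is reversed: a germ becomes invertible exactly when its cone lies \emph{in} the relevant prime, not outside it. Moreover, these cones are not compact, while \(\varphi(Q)\) is generated only by compact objects and so agrees with \(F^{-1}(Q)\) only on compacts. Hence locality of the stalk maps is not literally ``identical to Balmer's''. The paper does not spell out locality either (it defers to Balmer), so this is not a departure from its proof, but your sketch does not settle that point.
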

\begin{proof}
  Let \(Q\in\Spc(T')\), and put \(P=\ang{\{a\in T^\omega\mid F(a)\in Q\}}\).
  We first show that \(P\) is prime.
  Recall that \(P\cap T^\omega=\{a\in T^\omega\mid F(a)\in Q\}\).
  For \(a\in T^\omega\), let \(g(a)\) be a compact generator of \(\oang{F(a)}\).
  Then \(F(a)\in Q\) if and only if \(g(a)\in Q\).
  If \(a,b\in T^\omega\) and \(a,b\notin P\), then \(g(a),g(b)\notin Q\), and hence \(g(a)\otimes g(b)\notin Q\).
  Since \(g(a)\otimes g(b)\) is a compact generator of \(\oang{F(a\otimes b)}\), it follows that \(a\otimes b\notin P\), as desired.
  Next, we prove that \(P\neq T\).
  If \(P=T\), then \(Q\) contains \(1_{T'}\cong F(1_T)\in \ang{F(T^\omega)}\), and hence \(Q=T'\), a contradiction.
  Thus \(P\neq T\), and \(P\) is prime.
  For \(a\in T^\omega\), the inverse image of \(\Supp(a)\) is \(\Supp(F(a))=\Supp(g(a))\).
  Hence this map is continuous.

  We now construct a morphism between the structure sheaves.
  Let \(U\subset \Spc(T)\) be an open subset, and let \(V\) be its inverse image in \(\Spc(T')\).
  For \(a\in T^\omega\) with \(\Supp(a)\cap U=\emptyset\), we have \(\Supp(F(a))\cap V=\emptyset\).
  Hence \(F(a)\) is zero in \(T'_V\).
  This shows that the composition \(T\to T'\to T'_V\) factors through \(T_U\).
  The induced functor \(T_U\to T'_V\) is automatically symmetric monoidal, and the induced homomorphism \(\End(1_{T_U})\to \End(1_{T'_V})\) gives the desired morphism.
  Functoriality follows from the construction.
\end{proof}

We now give two basic tools for computing the spectrum.
Proposition \ref{prop:ideals} is analogous to closed embeddings, while Example \ref{example:quotient} is analogous to open embeddings in ordinary algebraic geometry.

\begin{prop}\label{prop:ideals}
Let \(T\) be a weakly compactly generated tt-category, and let \(I\subset T\) be a weakly finitely compactly generated smashing ideal.
Then \(I\) is a weakly compactly generated tt-category, and the right adjoint \(\iota_I^\mR:T\to I\) of the inclusion is a cg-preserving homomorphism.
The map \(\Spc(\iota_I^{\mR}):\Spc(I)\to \Spc(T)\) is a closed embedding whose image is \(\Supp(d)\), where \(d\in I\) is a compact generator.
\end{prop}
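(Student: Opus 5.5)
We argue in three steps: \(I\) is weakly compactly generated, \(\iota_I^\mR\) is a cg-preserving homomorphism, and \(\Spc(\iota_I^\mR)\) is a closed embedding onto \(\Supp(d)\).

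\textbf{Step 1: \(I\) is a weakly compactly generated tt-category.}
Write \(I=\oang{d}\) with \(d\in T^\omega\). Since \(I\) is smashing, \(\iota_I\) is strongly continuous, so \(I^\omega=T^\omega\cap I\). The ideal \(I\) is generated as a localizing subcategory by the set \(\{a\otimes d\mid a\in T^\omega\}\) of compact objects, hence is compactly generated as a localizing subcategory of itself.

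Next, the tensor product on \(I\) is the restriction of that of \(T\). Indeed, \(\iota_I^\mR\) is symmetric monoidal and \(\iota_I^\mR\iota_I\simeq\id\). Hence \(I^\omega\otimes I^\omega\subset T^\omega\cap I=I^\omega\).

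\textbf{Step 2: \(\iota_I^\mR\) is a cg-preserving homomorphism.}
We know \(\iota_I^\mR\) is a symmetric monoidal continuous functor, so it is a homomorphism. In terms of the left idempotent \(e\to1_T\), we have \(\iota_I\iota_I^\mR(a)\cong e\otimes a\).

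For \(a\in T^\omega\), the ideal \(\oang{e\otimes a}_I\) generated inside \(I\) is the intersection \(I\cap\oang{a}\). We claim it equals \(\oang{a\otimes d}\). One inclusion holds because \(a\otimes d\cong a\otimes e\otimes d\) lies in \(\oang{e\otimes a}\). The other holds because \(e\in\oang{d}\), so \(e\otimes a\in\oang{a\otimes d}\). Since \(a\otimes d\) is compact in \(I\), this ideal is weakly finitely compactly generated, and \(\iota_I^\mR\) is cg-preserving.

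\textbf{Step 3: the map on spectra.}
For \(Q\in\Spc(I)\), the previous proposition gives \(P=\ang{a\in T^\omega\mid e\otimes a\in Q}\). By Step 2, \(e\otimes a\in Q\) if and only if \(a\otimes d\in Q\).

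\emph{Image in \(\Supp(d)\).} Since \(Q\ne I\) and \(Q\) is compactly generated, some compact \(c\in I^\omega\) is not in \(Q\). Then \(c\cong c\otimes e\), and \(c\) lies in \(\oang{d}\). If \(d\otimes d\in Q\), then every \(d\otimes a\) lies in \(Q\), because \(Q\) is an ideal of \(I\) and \(d\in I\). This gives \(I\subset Q\), a contradiction. So \(d\otimes d\notin Q\), i.e. \(d\notin P\), and \(P\in\Supp(d)\).

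\emph{Inverse map.} For \(P\in\Supp(d)\), set \(Q=\ang{c\in I^\omega\mid c\in P}\), i.e. \(Q=P\cap I\) generated by compacts. We check:
\begin{itemize}
\item \(Q\) is prime in \(I\): if \(c,c'\in I^\omega\setminus P\), then \(c\otimes c'\notin P\).
\item \(Q\ne I\): this follows from \(d\notin P\).
\end{itemize}
The two constructions are mutually inverse. For compact \(a\), primality of \(P\) together with \(d\notin P\) gives \(a\in P\iff a\otimes d\in P\). Conversely, for \(c\in I^\omega\), primality of \(Q\) together with \(d\otimes d\notin Q\) gives \(c\in Q\iff c\otimes d\in Q\). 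This yields a bijection \(\Spc(I)\to\Supp(d)\).

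\emph{Homeomorphism onto a closed subset.} The image \(\Supp(d)\) is closed by definition. For the topologies, compare closed sets. For \(c\in I^\omega\), the support of \(c\) in \(I\) corresponds to \(\Supp(c)\cap\Supp(d)\). Conversely, the preimage of \(\Supp(a)\) is \(\Supp_I(a\otimes d)\), and \(\Supp(a\otimes d)=\Supp(a)\cap\Supp(d)\). Hence the basic closed sets match, and the bijection is a homeomorphism onto \(\Supp(d)\).

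\textbf{Main obstacle.} The delicate points are the primality checks. They need a tensor-product formula for supports that handles the fact that \(1_I=e\) is not compact. Both directions must therefore be routed through the compact object \(d\) rather than the unit.
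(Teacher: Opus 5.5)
Your proof is correct and follows essentially the same route as the paper: \(\oang{\iota_I^\mR(a)}=\oang{a\otimes d}\) for cg-preservation, the inverse \(P\mapsto\ang{P\cap I^\omega}\) on \(\Supp(d)\), the equivalence \(a\in P\iff a\otimes d\in P\) obtained from primality and \(d\notin P\), and a comparison of supports for the topology. The one slip is in showing the image lies in \(\Supp(d)\): the ideal property alone does not take you from \(d\otimes d\in Q\) to \(d\in Q\); you need primality of \(Q\), or more simply \(e\otimes d\cong d\), which gives \(d\in P\iff d\in Q\), and \(d\in Q\) would force \(Q\supset\oang{d}=I\).
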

\begin{proof}
  Clearly, \(I\) is a weakly compactly generated tt-category.
  Let \(d\in I\) be a compact generator of \(I\) as a localizing ideal of \(T\).
  For any \(a\in T^\omega\), we have \(a\otimes d\in I\cap T^\omega=I^\omega\), and \(\oang{\iota_I^{\mR}(a)}=\oang{a\otimes d}.\)
  Hence \(\iota_I^{\mR}\) is cg-preserving.
  It is clear that the image of \(\Spc(\iota_I^{\mR})\) is contained in \(\Supp(d)\).

  We now construct the inverse map.
  Let \(P\subset T\) be a prime ideal such that \(d\notin P\).
  Then \(\ang{\iota_I^{-1}(P^\omega)}\) is a prime ideal of \(I\).
  This construction defines a map \(\phi\colon \Supp(d)\to \Spc(I).\)
  We claim that \(\phi\) is continuous and inverse to \(\Spc(\iota_I^{\mR})\).
  The equality \(\phi\circ\Spc(\iota_I^{\mR})=\id_{\Spc(I)}\) is immediate.

  To prove \(\Spc(\iota_I^{\mR})\circ\phi=\id_{\Supp(d)}\), it suffices to show that, for any prime ideal \(P\subset T\) with \(d\notin P\) and any \(a\in T^\omega\), we have
\[
  a\in P
  \quad\Longleftrightarrow\quad
  \iota_I^{\mR}(a)\in \iota_I^{-1}(P).
\]
The latter condition is equivalent to \(a\otimes d\in P\cap I\).
The implication \(\Rightarrow\) is immediate.
Conversely, if \(a\otimes d\in P\cap I\), then \(a\in P\), since \(d\notin P\) and \(P\) is prime.

  Finally, for any \(a\in T^\omega\), we have
  \[
    \Supp_{\Spc(I)}(a\otimes d)=\phi^{-1}(\Supp_{\Spc(T)}(a)).
  \]
  Hence \(\phi\) is continuous.
\end{proof}

\begin{example}[{cf. \cite[Proposition 3.11]{BalmerSpectrumPrimeIdeals}}]\label{example:quotient}
  Let \(T\) be a weakly compactly generated tt-category, and let \(I\subset T\) be a compactly generated localizing ideal.
  Then the quotient \(T/I\) is also a weakly compactly generated tt-category, and the quotient functor \(L_I:T\to T/I\) is a cg-preserving homomorphism.
  The morphism \(\Spec(L_I):\Spec(T/I)\to \Spec(T)\) is an embedding whose image is \(\Spc(T)\setminus \bigcup_{a\in I^\omega}\Supp(a)\).
\end{example}

For a formal scheme \(\Frak{X}\), \(\Dtors(\Frak{X})\) is a weakly compactly generated tt-category.

\begin{thm}\label{thm:Spec_of_Dtors}
  Let \(\Frak{X}\) be a quasi-compact quasi-separated formal scheme.
  Assume that there exists an affine open cover \(\{U_i\}\) such that each \(U_i\) is isomorphic to \(\Spf(R_I^\wedge)\) for some ring \(R\) and some ideal \(I\) generated by a weakly proregular sequence (e.g., a Noetherian formal scheme).
  Then there is a natural isomorphism
  \[
  \Spec(\Ho(\Dtors(\Frak{X})))\cong \Frak{X}
  \]
  of ringed spaces.
\end{thm}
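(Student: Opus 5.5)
We reduce to the affine case using Example \ref{example:quotient} and then glue. For an open $U\subset\Frak{X}$, the restriction $\Dtors(\Frak{X})\to\Dtors(U)$ is a quotient by a compactly generated smashing ideal $K_U$. Example \ref{example:quotient} therefore identifies $\Spec(\Dtors(U))$ with the open subspace $\Spc(\Dtors(\Frak{X}))\setminus\bigcup_{a\in K_U^\omega}\Supp(a)$. We also need that the structure sheaf restricts correctly. This should follow from the definition of $\sO_T$: for $V$ inside this open set, $I_V\supset K_U$, so $T_V$ computed in $\Dtors(\Frak{X})$ agrees with that computed in $\Dtors(U)$. Once the affine case is settled, these open pieces cover $\Spc(\Dtors(\Frak{X}))$. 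Every prime $P$ omits some $K_{U_i}$, since $\bigcap_i K_{U_i}=0$ by Zariski descent and primality then forces $K_{U_i}\not\subset P$ for some $i$, using a finite cover and tensor products of compact generators. The local isomorphisms are compatible because they are natural with respect to restriction to smaller affines, so they glue to $\Spec(\Ho(\Dtors(\Frak{X})))\cong\Frak{X}$.

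For the affine case $U=\Spf(R_I^\wedge)$ with $I=(f_1,\dots,f_n)$ weakly proregular, we write $T=\tors{I}(R)$. This is the smashing ideal of $D(R)$ generated by $d=\Kos(R;f_1,\dots,f_n)$, and it is weakly finitely compactly generated. Proposition \ref{prop:ideals} then gives a homeomorphism of $\Spc(T)$ onto $\Supp(d)\subset\Spc(D(R)^\omega)=\Spec R$. By Thomason's classification, this is $V(I)=|\Spf(R_I^\wedge)|$. It remains to compute the structure sheaf. Basic opens of $\Spc(T)$ are complements of $\Supp(a\otimes d)$, and it suffices to handle $D(g)\cap V(I)$ for $g\in R$. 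On such an open, $I_U$ is generated by $\Kos(g)\otimes d$, and the quotient $T_U$ is $\tors{I}(R_g)$, again via Example \ref{example:quotient}.

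The main computation, and the main obstacle, is $\End(1_{\tors{I}(R)})$. The unit $1_T=\iota^\mR(R)=R\Gamma_I(R)$ is not compact, and $\End_T(R\Gamma_I R)=H^0\RHom(R\Gamma_I R,R\Gamma_I R)=H^0\RHom_R(R\Gamma_I R,R)$. By Greenlees--May duality, which is valid precisely for weakly proregular ideals (Porta--Shaul--Yekutieli), this is $H^0(L\Lambda_I R)=R_I^\wedge$. The same argument gives $(R_g)_I^\wedge$ on $D(g)\cap V(I)$. These rings are the sections of $\sO_{\Spf(R_I^\wedge)}$ on basic opens, compatibly with restriction maps. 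The presheaf $V\mapsto\End(1_{T_V})$ therefore agrees with $\sO_{\Frak{X}}$ on a basis, so the sheafifications agree.

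Some care is needed with the idempotent completion and with checking that $\Ho$ does not change $\End$ of the unit. Both should be harmless, since $T_U$ here is already idempotent complete. The same care is needed to confirm that weak proregularity is preserved under localization $R\to R_g$.
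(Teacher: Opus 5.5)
Your proposal follows the paper's proof. You reduce to the affine case via Example~\ref{example:quotient}, obtain $\Spc(\tors{I}(R))\cong V(I)$ from Proposition~\ref{prop:ideals} and $\Spc(D(R)^\omega)\cong\Spec R$, and identify $\End(1_{\tors{I}(R)})$ with the derived $I$-completion of $R$, which is $R_I^\wedge$ by weak proregularity; your extra detail on basic opens and sheafification is fine.

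There is one slip in the covering step. Take compact generators $k_i$ of $K_{U_i}$. Applying primality to $k_1\otimes\cdots\otimes k_n\cong 0\in P$ gives $k_i\in P$, hence $K_{U_i}\subset P$, for some $i$. That inclusion is exactly what it means for $P$ to lie in the open piece $\Spc(\Dtors(\Frak{X}))\setminus\bigcup_{a\in K_{U_i}^\omega}\Supp(a)$, so you should conclude $K_{U_i}\subset P$, not $K_{U_i}\not\subset P$ as you wrote.
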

\begin{proof}
  By Example \ref{example:quotient}, it suffices to prove the assertion when \(\Frak{X}\) is affine, say \(\Frak{X}=\Spf(R_I^\wedge)\).
  By \cite[Theorem 4.1]{Balmer2019GuideTTClassification} and Remark \ref{rem:spcetrum_of_compactly_generated_tt-category}, there is a homeomorphism \(\Spc(D(R))\cong \Spec R\).
  Since \(\tors{I}(R)\) is a compactly generated smashing ideal of \(D(R)\), Proposition \ref{prop:ideals} gives a homeomorphism of underlying spaces \(\Spc(\tors{I}(R))\cong V(I)\subset \Spec R\).
  It remains to identify the structure sheaf.
  The ring \(\End(1_{\tors{I}(R)})\) is the derived \(I\)-completion of \(R\), which is isomorphic to \(R_I^\wedge\) by the weak proregularity assumption.
  This gives an isomorphism of structure sheaves.
\end{proof}

\section{Reconstruction of formal schemes}

In this section, we prove our main result, Theorem \ref{thm:reconstruction}.
We first define some categories in order to state the theorem.

Let \(\largett\) be the \((2,1)\)-category of tt-categories \(T\) such that \(T^\omega\otimes T^\omega\subset T^\omega\), \(1_T\in T^\omega\), and \(T^\omega\) is rigid, that is, every object of \(T^\omega\) is dualizable.
In this case, \(T^\mcg\) is a smashing ideal by Lemma \ref{lem:mcg_is_smashing_ideal}.
Morphisms in \(\largett\) are compact-object-preserving homomorphisms \(F:T\to T'\) such that \(\iota^{\mR}_{(T')^\mcg}\circ F:T\to T'\to (T')^\mcg\) factors through \(\iota^{\mR}_{T^\mcg}:T\to T^\mcg\), and such that the induced functor \(F^\mcg:T^\mcg\to (T')^\mcg\) is cg-preserving.
By the same argument as in Proposition \ref{prop:ideals}, if \((T')^\mcg\subset T'\) is weakly finitely compactly generated, then the last condition is automatically satisfied.
The \(2\)-morphisms in \(\largett\) are natural isomorphisms.

The correspondence \(T\mapsto T^\mcg\), \((F:T\to T')\mapsto (F^\mcg:T^\mcg\to (T')^\mcg)\), defines a functor
\[
  (-)^\mcg:\largett\to \ideals.
\]

Let \(\higherlargett\) be the subcategory of \(\CAlg(\dualpr)\) whose objects are those \(C\) such that every compact object of \(C\) is dualizable.
Its morphisms are strongly continuous symmetric monoidal functors \(F:C\to D\) such that \(C\to D\to D^\mcg\) factors through \(C^\mcg\), and such that the induced functor \(F^\mcg:C^\mcg\to D^\mcg\) is cg-preserving.

Let \(\Formalsch\) be the (classical) category of Noetherian formal schemes.
Let \(\nFormalsch\) be the full subcategory of \(\Formalsch\) whose objects are Noetherian formal schemes \(\Frak{X}\) such that there exists an affine open covering \(\{U_i\}_i\) for which each restriction functor \(\csNuc(\Frak{X})\to \csNuc(U_i)\) is a quotient functor.
Every Noetherian affine formal scheme belongs to \(\nFormalsch\).

\begin{thm}\label{thm:reconstruction}
  The compositions of the following functors are the canonical inclusions:
  \begin{align*}
    \Formalsch^\op&\os{\eNuc}{\to} \higherlargett\os{\Ho}{\to} \largett\os{(-)^\mcg}{\to} \ideals\os{\Spec}{\to} {\LocSpaces}^\op,\\
    (\nFormalsch)^\op&\os{\csNuc}{\to} \higherlargett\os{\Ho}{\to} \largett\os{(-)^\mcg}{\to} \ideals\os{\Spec}{\to} {\LocSpaces}^\op.
  \end{align*}
\end{thm}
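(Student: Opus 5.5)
We prove the statement in two stages: first we check that every functor in the chain is well defined on objects and morphisms, and then we identify the composite with the inclusion, on objects and then on morphisms.

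\textbf{Objects.} For a Noetherian formal scheme \(\Frak{X}\), the category \(\eNuc(\Frak{X})\) is dualizable, being a rigidification. Its compact objects are dualizable: locally, \(\eNuc(U)^\omega\cong\Perf(R_I^\wedge)\), and dualizability can be checked after Zariski descent (Proposition \ref{prop:descent_for_eNuc}). Hence \(\Ho(\eNuc(\Frak{X}))\) lies in \(\largett\). Theorem \ref{thm:mcg_is_torsions} gives \(\Ho(\eNuc(\Frak{X}))^\mcg\cong\Ho(\Dtors(\Frak{X}))\), and Theorem \ref{thm:Spec_of_Dtors} gives \(\Spec(\Ho(\Dtors(\Frak{X})))\cong\Frak{X}\). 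The same argument works for \(\csNuc\) on \(\nFormalsch\), using the last assertion of Theorem \ref{thm:mcg_is_torsions}. This is exactly why that subcategory was introduced.

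\textbf{Morphisms.} Let \(f:\Frak{Y}\to\Frak{X}\) be a morphism of Noetherian formal schemes. The pullback \(f^*:\eNuc(\Frak{X})\to\eNuc(\Frak{Y})\) is symmetric monoidal and strongly continuous, being a morphism in \(\dualpr\) obtained from rigidification and descent. We must check three things.
\begin{enumerate}
\item Compact objects are preserved. Locally, \(f^*\) sends \(\Perf(R_I^\wedge)\) to \(\Perf(S_J^\wedge)\).
\item The functor \(\iota^\mR_{\Dtors(\Frak{Y})}\circ f^*\) factors through \(\iota^\mR_{\Dtors(\Frak{X})}\). Both ideals are smashing, with left idempotents \(e_{\Frak{X}}\) and \(e_{\Frak{Y}}\). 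It suffices that \(f^*(e_{\Frak{X}})\otimes e_{\Frak{Y}}\cong e_{\Frak{Y}}\), that is, that \(f^*\) sends the right idempotent \(\mathrm{cone}(e_{\Frak{X}}\to 1)\) to an object killed by \(e_{\Frak{Y}}\otimes-\). We check this affine-locally. There \(f\) is induced by \(R\to S\) with \(IS\subset\sqrt{J}\), since \(f\) is adic up to radicals, being continuous. Consequently \(f^*\) sends the Koszul generator of \(\tors{I}\) into \(\tors{J}\). Tensoring with \(e_{\Frak{Y}}\) then identifies the result with the torsion pullback \(\Dtors(\Frak{X})\to\Dtors(\Frak{Y})\).
\item The induced functor \(F^\mcg\) is cg-preserving. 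This is automatic, by the remark before the theorem, because \(\Dtors(\Frak{Y})\) is weakly finitely compactly generated.
\end{enumerate}
Thus \(F^\mcg\) is the usual derived pullback of torsion modules \(\Dtors(\Frak{X})\to\Dtors(\Frak{Y})\). For \(\csNuc\) the same argument applies, using the embedding \(\csNuc\subset\eNuc\).

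\textbf{Identification of the composite.} It remains to show that \(\Spec\) of this torsion pullback is \(f\) under the isomorphisms of Theorem \ref{thm:Spec_of_Dtors}, and that these isomorphisms are natural. We reduce to affines \(\Spf(S_J^\wedge)\to\Spf(R_I^\wedge)\) via Example \ref{example:quotient}, which makes the Balmer spectrum compatible with open restriction. On points, a prime \(\Frak{q}\supset J\) corresponds to the ideal of compact torsion objects whose support misses \(\Frak{q}\). Its preimage is the prime corresponding to \(\Frak{p}=\Frak{q}\cap R\), using the classical naturality of \(\Spc(D(R))\cong\Spec R\) together with Proposition \ref{prop:ideals}. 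On structure sheaves, the map \(\End(1_{\tors{I}(R)})\to\End(1_{\tors{J}(S)})\) is the map of derived completions \(R_I^\wedge\to S_J^\wedge\) induced by \(R\to S\). This is \(f^\sharp\), and the same holds after localizing to basic opens. Functoriality, meaning compatibility with composition, follows because every identification is natural in the ring maps.

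The step we expect to be the main obstacle is the verification of the morphism conditions, in particular the factorization through \(\iota^\mR\). The delicate point is making precise that a continuous map of adic rings satisfies \(IS\subset\sqrt{J}\), and that pullback on nuclear modules is compatible with the torsion idempotents globally and not just locally. For this we would glue the local idempotent identities using the smashing-kernel description in Proposition \ref{prop:descent_for_eNuc}.
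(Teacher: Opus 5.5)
Your overall route matches the paper's. The paper's proof is the single sentence combining Theorem \ref{thm:mcg_is_torsions} with Theorem \ref{thm:Spec_of_Dtors}. Your treatment of objects, and your affine-local identification of \(\Spec(F^\mcg)\) with \(f\) via naturality of \(\Spc(D(R))\cong\Spec R\), Proposition \ref{prop:ideals} and Example \ref{example:quotient}, is a faithful expansion of that sentence.

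You go further than the paper by checking that \(f^*\) really is a morphism of \(\higherlargett\), and there your step 2 contains a genuine error. Let \(r_{\Frak X}\) be the cone of \(e_{\Frak X}\to 1\). You correctly reduce the factorization to \(f^*(e_{\Frak X})\otimes e_{\Frak Y}\cong e_{\Frak Y}\), that is, \(e_{\Frak Y}\otimes f^*(r_{\Frak X})\cong 0\), that is, \(\Dtors(\Frak Y)\subset\oang{f^*\Dtors(\Frak X)}\). You then deduce that \(f^*\Kos(R;I)=\Kos(S;I)\) lies in \(\tors{J}\). This fails in two ways.
\begin{enumerate}
\item Even if true, it would only give \(f^*(e_{\Frak X})\in\Dtors(\Frak Y)\), i.e.\ \(f^*(e_{\Frak X})\otimes e_{\Frak Y}\cong f^*(e_{\Frak X})\). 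That is the opposite containment to the one you need.
\item It is false for general morphisms in \(\Formalsch\). Continuity gives only \(IS\subset\sqrt{J}\). The claim \(\Kos(S;I)\in\tors{J}\) requires adicness, \(J\subset\sqrt{IS}\). For \(\Spf\mathbb{Z}_p[[t]]\to\Spf\mathbb{Z}_p\), the object \(\Kos(S;p)=\mathbb{F}_p[[t]]\) is not \((p,t)\)-torsion.
\end{enumerate}

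The fix uses the inclusion you do have, in the right direction. Since \(I^mS_J^\wedge\subset JS_J^\wedge\) for some \(m\), every \(J\)-torsion complex is \(I\)-torsion, so \(\Kos(S;J)\in\tors{IS}(S_J^\wedge)=\oang{\Kos(S;I)}\). We have \(\Kos(S;I)\otimes f^*(r_{\Frak X})\cong f^*(\Kos(R;I)\otimes r_{\Frak X})\cong 0\). The class \(\{z\mid z\otimes f^*(r_{\Frak X})\cong 0\}\) is a localizing ideal, so it contains \(\Kos(S;J)\) and hence \(e_{\Frak Y}\). This gives \(e_{\Frak Y}\otimes f^*(r_{\Frak X})\cong 0\), checked locally on \(\Frak Y\) by descent.

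Consequently \(F^\mcg=e_{\Frak Y}\otimes f^*(-)\), which is locally \(M\mapsto\Gamma_J(M\otimes S)\). For non-adic \(f\) this does not preserve compact objects: \(\mathbb{F}_p\mapsto\Gamma_t\mathbb{F}_p[[t]]\). So the cg-preservation in your step 3 is genuinely needed. On points, you must test \(F^\mcg(a)\in Q\) through the compact generator \(\Kos(S;J)\otimes f^*a\) of \(\oang{F^\mcg(a)}\), as the paper's definition of \(\Spc(F)\) does. With step 2 corrected, the square \(\Gamma_J\circ f^*\cong F^\mcg\circ\Gamma_I\) holds and the rest of your argument goes through. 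Your step 1 needs no local argument, since strongly continuous functors automatically preserve compact objects.
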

\begin{proof}
This follows by combining Theorem \ref{thm:mcg_is_torsions} and Theorem \ref{thm:Spec_of_Dtors}.
\end{proof}

\begin{cor}\label{cor:identical_formal_schemes}
  \begin{enumerate}
  \item Let \(\Frak{X},\Frak{Y}\) be Noetherian formal schemes.
  If there is a symmetric monoidal triangulated equivalence \(\Ho(\eNuc(\Frak{X}))\cong \Ho(\eNuc(\Frak{Y}))\), then \(\Frak{X}\cong \Frak{Y}\).
  \item Let \(\Frak{X},\Frak{Y}\in \nFormalsch\).
  If there is a symmetric monoidal triangulated equivalence \(\Ho(\csNuc(\Frak{X}))\cong \Ho(\csNuc(\Frak{Y}))\), then \(\Frak{X}\cong \Frak{Y}\).
  \end{enumerate}
\end{cor}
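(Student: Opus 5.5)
The plan is to reduce the corollary to Theorem \ref{thm:reconstruction}, or more directly to Theorem \ref{thm:mcg_is_torsions} and Theorem \ref{thm:Spec_of_Dtors}. The one point needing care is that the given equivalence lives only at the level of homotopy categories. So the first step is to check that everything in the chain \((-)^\mcg\) followed by \(\Spec\) depends only on the tensor triangulated homotopy category. This holds because compactness, localizing subcategories, localizing ideals, smashing ideals and Verdier quotients are all triangulated notions. Strong compact generation and maximality are likewise defined purely in terms of \(\Ho\). Hence the functor \((-)^\mcg\) on \(\largett\) and the functor \(\Spec\) on \(\ideals\) take a symmetric monoidal triangulated equivalence \(\Phi\colon \Ho(\eNuc(\Frak{X}))\to \Ho(\eNuc(\Frak{Y}))\) to isomorphisms.

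Concretely, let \(T=\Ho(\eNuc(\Frak{X}))\) and \(T'=\Ho(\eNuc(\Frak{Y}))\). I first verify that \(T,T'\in\largett\). The unit is compact since \(\eNuc(\Frak{X})^\omega\) is locally \(\Perf\). The compact objects are closed under \(\otimes\) and are rigid; this holds because they are dualizable, by membership in \(\higherlargett\), which Theorem \ref{thm:reconstruction} already presupposes. Next, an equivalence \(\Phi\) preserves compact objects and carries strongly compactly generated localizing ideals bijectively to such ideals. Therefore \(\Phi(T^\mcg)=(T')^\mcg\), and \(\Phi\) restricts to a symmetric monoidal equivalence \(T^\mcg\simeq (T')^\mcg\) compatible with \(\iota^\mR\). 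In particular, \(\Phi\) is a morphism in \(\largett\), and its restriction is cg-preserving since it is an equivalence. Theorem \ref{thm:mcg_is_torsions} identifies \(T^\mcg\simeq \Ho(\Dtors(\Frak{X}))\) and \((T')^\mcg\simeq\Ho(\Dtors(\Frak{Y}))\) as tt-categories. Here I must check that the tensor structure on \(T^\mcg\) induced via \(\iota^\mR\) agrees with the native one on \(\Dtors\). This follows from Lemma \ref{lem:torsion_is_localizing_ideal}: \(\Dtors\) is a tensor ideal whose unit is the image of \(\iota^\mR(1)\), and the tensor product is just the restriction.

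Applying the functor \(\Spec\), which is functorial for cg-preserving homomorphisms and hence sends equivalences to isomorphisms of locally ringed spaces, I obtain \(\Spec(\Ho(\Dtors(\Frak{X})))\cong\Spec(\Ho(\Dtors(\Frak{Y})))\). Theorem \ref{thm:Spec_of_Dtors} identifies the two sides with \(\Frak{X}\) and \(\Frak{Y}\); Noetherian formal schemes satisfy the weak proregularity hypothesis. This gives \(\Frak{X}\cong\Frak{Y}\) as locally ringed spaces, hence as formal schemes. For part 2 the argument is identical, with \(\csNuc\) in place of \(\eNuc\). The hypothesis \(\Frak{X},\Frak{Y}\in\nFormalsch\) is exactly what the last clause of Theorem \ref{thm:mcg_is_torsions} needs to give \(\csNuc(\Frak{X})^\mcg\cong\Dtors(\Frak{X})\).

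The main obstacle I expect is the monoidal compatibility in the second step. The identification in Theorem \ref{thm:mcg_is_torsions} must be an equivalence of tensor triangulated categories, including units, and not merely of triangulated categories. Otherwise the structure sheaves \(\End(1_{T_U})\) could differ. I would handle this by noting that both tensor structures are obtained by restricting the ambient tensor product to the smashing ideal. The unit in both cases is the left idempotent \(e=\iota\iota^\mR(1)\), which under the embedding is the torsion part of the structure sheaf.
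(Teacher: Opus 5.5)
Your proposal is correct and follows the paper's route. The paper's proof is the one-line observation that \(\largett\) is closed under equivalences, so a symmetric monoidal triangulated equivalence becomes an isomorphism there, and Theorem \ref{thm:reconstruction} (that is, Theorems \ref{thm:mcg_is_torsions} and \ref{thm:Spec_of_Dtors}) then gives \(\Frak{X}\cong\Frak{Y}\). You unpack the details the paper leaves implicit: \(\Phi(T^\mcg)=(T')^\mcg\), the factorization through \(\iota^\mR\), and the monoidal compatibility of the identification of \(T^\mcg\) with \(\Dtors\).
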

\begin{proof}
  The category \(\largett\) is closed under equivalences in the \((2,1)\)-category of tt-categories.
\end{proof}

\begin{cor}\label{cor:fully_faithful_weak_ver}
  \begin{enumerate}
  \item  The functor
  \[
    \eNuc:\Formalsch^\op\to \higherlargett
  \]
  is fully faithful.
  
  \item For any \(\Frak{X}\in \nFormalsch\) and \(\Frak{Y}\in\Formalsch\), there is a natural equivalence
  \[
   \Map_{\higherlargett}(\csNuc(\Frak{X}),\eNuc(\Frak{Y}))\cong \Hom(\Frak{Y},\Frak{X}).
  \]

  \item The functor
  \[
    \csNuc:(\nFormalsch)^\op\to \higherlargett
  \]
  is fully faithful.
\end{enumerate}
\end{cor}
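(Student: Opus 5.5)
The plan is to get full faithfulness from Theorem \ref{thm:reconstruction}, which says that the composite \(\Spec\circ(-)^\mcg\circ\Ho\) is a left inverse of \(\eNuc\) (and of \(\csNuc\) on \(\nFormalsch\)). Because of this left inverse, the map
\[
\Hom(\Frak{Y},\Frak{X})\to \pi_0\Map_{\higherlargett}(\eNuc(\Frak{X}),\eNuc(\Frak{Y}))
\]
has a retraction. It is therefore injective, and faithfulness (together with discreteness of the relevant components) comes from the same retraction. What remains is to show that every morphism \(F\colon \eNuc(\Frak{X})\to\eNuc(\Frak{Y})\) in \(\higherlargett\) is equivalent to \(f^*\), where \(f\colon\Frak{Y}\to\Frak{X}\) is the morphism of formal schemes obtained by applying the reconstruction to \(F\). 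We also need the mapping spaces to be discrete, that is, automorphisms of \(f^*\) should be trivial.

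\textbf{Step 1: reduction to the torsion categories.} By Theorem \ref{thm:mcg_is_torsions}, \(F\) induces a symmetric monoidal continuous functor \(F^\mcg\colon \Dtors(\Frak{X})\to\Dtors(\Frak{Y})\) at the \(\infty\)-level, compatible with the smashing localizations \(\iota^{\mR}\). Since \(\eNuc(\Frak{X})=\Dtors(\Frak{X})^\rig\) is the rigidification, and rigidification is functorial for strongly continuous symmetric monoidal functors between locally rigid categories, I expect \(F\) to be determined by \(F^\mcg\). More precisely, I would show that the map
\[
\Map(\eNuc(\Frak{X}),\eNuc(\Frak{Y}))\to \Map(\Dtors(\Frak{X}),\Dtors(\Frak{Y}))
\]
is an equivalence onto the appropriate component, using the universal property of rigidification as a right adjoint to the inclusion of rigid categories into locally rigid ones (Efimov). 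This reduces the claim to statements about \(\Dtors\).

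\textbf{Step 2: functors between torsion categories.} Work Zariski locally via Proposition \ref{prop:descent_for_eNuc} and descent for \(\Dtors\), so that we may take \(\Frak{X}=\Spf(A_I^\wedge)\) and \(\Frak{Y}=\Spf(B_J^\wedge)\). A symmetric monoidal functor \(\tors{I}(A)\to\tors{J}(B)\) that preserves the idempotent algebra structure corresponds, after passing to endomorphisms of units, to a map of \(E_\infty\)-rings \(A_I^\wedge\to B_J^\wedge\). The cg-preserving condition should force this map to be adic, i.e.\ to send \(I\) into \(\sqrt{J}\); alternatively, this follows from continuity of the induced map on spectra (Theorem \ref{thm:Spec_of_Dtors}). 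Since the source and target rings are discrete, the space of \(E_\infty\)-maps between them is discrete and equals the set of ring maps. Gluing over the cover then recovers \(f\), and we get \(F^\mcg\simeq f^*\).

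\textbf{Step 3: the Clausen--Scholze cases.} For (2), restrict along the fully faithful inclusion \(\csNuc(\Frak{X})\subset\eNuc(\Frak{X})\). Theorem \ref{thm:mcg_is_torsions} shows that \(\csNuc(\Frak{X})^\mcg=\Dtors(\Frak{X})\) for \(\Frak{X}\in\nFormalsch\), so the same reduction applies. The only extra point is that a map out of \(\csNuc(\Frak{X})\) is determined by its restriction to \(\Dtors(\Frak{X})\); here I would use that \(\csNuc\) also embeds into the rigidification of \(\Dtors\). Part (3) then follows from (2) combined with (1).

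The main obstacle is Step 1: showing that a strongly continuous symmetric monoidal functor on \(\eNuc\) is determined, up to contractible choice, by its restriction to the \(\mcg\) part. If the universal property of rigidification is not directly available in this form, I would first try to use Lemma \ref{lem:quotient_of_nuclear_modules}. That lemma gives \(\eNuc(U)\otimes_{D(A_I^\wedge)}D(A/I^m)\cong D(A/I^m)\), so we can compare \(F\) with \(f^*\) after base change to each \(A/I^m\) and then pass to the limit over \(m\), invoking Efimov's description of \(\eNuc\) as a limit of the \(D(A/I^m)\).
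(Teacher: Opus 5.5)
Your outline is correct, but the core computation follows a different route from the paper's.

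\textbf{The paper's route.} It reduces to affine pieces $\Frak{X}=\Spf(A_I^\wedge)$, $\Frak{Y}=\Spf(B_J^\wedge)$, using the universal property of rigidification only for that reduction. It then writes $\eNuc(\Frak{Y})\simeq\lim^{\mathrm{dual}}_n D(B/J^n)$, so maps into it are computed termwise. The $\mcg$-condition gives $I^m\subset\ker(A_I^\wedge\to B/J^n)$. Lemma \ref{lem:quotient_of_nuclear_modules} then collapses the source to $D(A/I^m)$, giving $\lim_n\colim_m\Hom(A/I^m,B/J^n)$.

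\textbf{Your route.} You identify $\Map_{\higherlargett}(\eNuc(\Frak{X}),\eNuc(\Frak{Y}))$ with continuous symmetric monoidal functors $\Dtors(\Frak{X})\to\Dtors(\Frak{Y})$. You compute these using that $\tors{I}(A)$ is a smashing localization of $D(A)$. Such functors are therefore exactly ring maps $A\to\End(1)=B_J^\wedge$ sending $I$ into $\sqrt{JB_J^\wedge}$, and discreteness comes for free because these rings are discrete.

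\textbf{Comparison.} Your route is more conceptual and avoids Efimov's inverse-limit description. The paper's route treats $\csNuc$ and $\eNuc$ sources uniformly, since Lemma \ref{lem:quotient_of_nuclear_modules} holds for both, and it only involves functors between categories with compact units. Your fallback in the last paragraph is essentially the paper's argument. The difference is that the limit is taken over $n$ on the target, while the source collapses to $D(A/I^m)$ in a colimit over $m$.

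\textbf{Points to fix.}
(i) A retraction onto the discrete set $\Hom(\Frak{Y},\Frak{X})$ only gives injectivity on $\pi_0$. It does not give discreteness; that has to come from Step 2.

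(ii) $F^{\mcg}$ is in general \emph{not} strongly continuous. For $\Spf k[[y]]\to\Spec k$ it is $D(k)\to\tors{y}(k[[y]])$, $k\mapsto\Gamma_y k[[y]]$, and the image is not compact. The counit $\eNuc(\Frak{Y})\to\Dtors(\Frak{Y})$ likewise fails to be strongly continuous in general. So functoriality of rigidification for strongly continuous functors is the wrong tool. What you need is $\Map(C,\Dtors(\Frak{Y})^{\rig})\simeq\Map_{\CAlg(\pr)}(C,\Dtors(\Frak{Y}))$ for rigid $C$. Combine this with the fact that precomposition with the smashing localization $\iota^{\mR}\colon\eNuc(\Frak{X})\to\Dtors(\Frak{X})$ is an inclusion of components.

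(iii) The condition on $I$ is automatic from factoring through $\tors{I}(A)$, not from cg-preservation. It expresses continuity, not adicness: non-adic maps such as $\Spf k[[x,y]]\to\Spf k[[x]]$ must be included.

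(iv) For (2), applying the universal property to maps out of $\csNuc(\Frak{X})$ requires $\csNuc(\Frak{X})$ to be rigid. The embedding $\csNuc(\Frak{X})\subset\eNuc(\Frak{X})$ does not supply this, whereas the paper's limit argument never needs it.

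(v) Part (3) follows from (2) alone. Postcomposition with the fully faithful inclusion $\csNuc(\Frak{Y})\subset\eNuc(\Frak{Y})$ is $(-1)$-truncated on mapping spaces. Since the natural map from $\Hom(\Frak{Y},\Frak{X})$ factors through the smaller space, that map is surjective on $\pi_0$, hence an equivalence.
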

\begin{proof}
  (1)
  Let \(\Frak{X},\Frak{Y}\) be Noetherian formal schemes, and let \(F:\eNuc(\Frak{X})\to \eNuc(\Frak{Y})\) be a morphism in \(\higherlargett\).
  Put \(f=\Spec(F^\mcg):\Frak{Y}\to\Frak{X}\).
  Let \(U=\Spf(A_I^\wedge)\subset \Frak{X}\) and \(V=\Spf(B_J^\wedge)\subset \Frak{Y}\) be affine open subsets such that \(f(V)\subset U\).
  Then \(F^\mcg\) induces
  \[
  \Dtors(U)=(\Dtors(\Frak{X}))_U\to (\Dtors(\Frak{Y}))_V=\Dtors(V),
  \]
  where we use the notation of Definition \ref{defn:presheaf_of_triangulated_categories}.
  By the universal property of rigidification, \(F\) induces a functor
  \[
  \eNuc(U)= \Dtors(U)^\rig\to \Dtors(V)^\rig= \eNuc(V).
  \]
  Therefore, we may assume that \(\Frak{X}\) and \(\Frak{Y}\) are affine.

  There is an equivalence \(\eNuc(\Frak{Y})\cong \lim_{n\in\nat}^{\mathrm{dual}}D(B/J^n)\), where \(\lim^{\mathrm{dual}}\) denotes the limit in \(\dualpr\); see \cite[Proposition 5.23, Lemma 5.26]{Efimov2025LocalizingInvariantsInverseLimits}.
  Let \(K_n\) be the kernel of the induced homomorphism
  \[
  A_I^\wedge=\End(1_{\eNuc(\Frak{X})})\to \End(1_{\eNuc(\Frak{Y})})=B_J^\wedge\to B/J^n.
  \]
  Consider the composition
  \[
  \eNuc(\Frak{X})\to \eNuc(\Frak{Y})\to \Dtors(\Frak{Y})\to D(B/J^n).
  \]
  Since it factors through \(\Dtors(\Frak{X})\), we have \(I^m\subset K_n\) for some \(m\ge1\).
  Hence, by Lemma \ref{lem:quotient_of_nuclear_modules}, this composition uniquely factors through \(\eNuc(\Frak{X})\otimes_{D(A_I^\wedge)}D(A/I^m)\cong D(A/I^m).\)
  Therefore,
  \begin{align*}
  \Map_{\higherlargett}(\eNuc(\Frak{X}),\eNuc(\Frak{Y}))
  &\cong  \lim_{n\in\nat} \colim_{m\in\nat} \Map_{\CAlg(\dualpr)}(D(A/I^m), D(B/J^n))\\
  &\cong  \lim_{n\in\nat} \colim_{m\in\nat} \Hom(A/I^m,B/J^n)\\
  &\cong \Hom(\Frak{Y},\Frak{X}).
  \end{align*}

  (2)
  Let \(F:\csNuc(\Frak{X})\to \eNuc(\Frak{Y})\) be a morphism in \(\higherlargett\).
  Put \(f=\Spec(F^\mcg)\), and let \(U\subset \Frak{X}\) and \(V\subset \Frak{Y}\) be open subsets such that \(f(V)\subset U\).
  Then \(F\) induces
  \[
  \csNuc(U)\to \eNuc(U)=(\Dtors(U))^\rig=((\Dtors(\Frak{X}))_U)^\rig\to ((\Dtors(\Frak{Y}))_V)^\rig=(\Dtors(V))^\rig=\eNuc(V).
  \]
  The rest of the proof is the same as that of (1).

  (3)
  Since \(\csNuc(\Frak{Y})\) is a full subcategory of \(\eNuc(\Frak{Y})\), the map
  \[
  \Map_{\higherlargett}(\csNuc(\Frak{X}),\csNuc(\Frak{Y}))\to \Map_{\higherlargett}(\csNuc(\Frak{X}),\eNuc(\Frak{Y}))
  \]
  is \((-1)\)-truncated.
  Since there is a natural map \(\Hom(\Frak{Y},\Frak{X})\to \Map_{\higherlargett}(\csNuc(\Frak{X}),\csNuc(\Frak{Y}))\), part (2) implies that it is surjective on \(\pi_0\), and hence it is an equivalence.
\end{proof}

\section{Field-valued points and full faithfulness}

In this section, we give another approach to reconstructing formal schemes from categories of nuclear modules, in the spirit of Liu's functorial spectrum \cite{Liu2011FunctorsTriangulatedTensorCategories}.
Symmetric monoidal strongly continuous functors \(\eNuc(\Frak{X})\to D(K)\) to the derived category of a field \(K\) play the role of geometric points; see Theorem \ref{thm:points_reconstruction}.
As an application, we prove the full faithfulness of the functors \(\eNuc\) and \(\csNuc\) in Theorem \ref{thm:fully_faithfulness_strong_ver}.

Let \(R\) be a ring, let \(K\) be a field, and let \(R\to K\) be a homomorphism.
Let \(\Frak{p}\subset R\) be its kernel, and let \(\kappa(\Frak{p})\) be the residue field.
The transcendence degree \(\trdeg_R K\) of \(K\) over \(R\) is defined to be the transcendence degree of \(K\) over \(\kappa(\Frak{p})\).

Let \(\Frak{c}\) be the cardinality of the continuum.

\begin{defn}
  Let \(k\) be a ring, and let \(\Frak{X}\) be a \(k\)-formal scheme.
  We say that \(\Frak{X}\) is nearly of \(<\Frak{c}\)-type if, for every point \(x\in\Frak{X}\), one has \(\trdeg_k(\kappa(x))<\Frak{c}\).
\end{defn}
For example, any formal scheme topologically of finite type over \(k\) is nearly of \(<\Frak{c}\)-type.

Let \(\smFormalsch_k\) be the category of Noetherian formal \(k\)-schemes \(\Frak{X}\) such that there exists an affine open covering \(\{U_i\}_i\) for which each restriction functor \(\csNuc(\Frak{X})\to \csNuc(U_i)\) is a quotient functor whose kernel is a smashing ideal.
Every Noetherian affine formal \(k\)-scheme belongs to \(\smFormalsch_k\).

\begin{lem}\label{lem:trancendental_degree_of_residue_field}
  Let \(k\) be \(\intg\), respectively a field, let \(R\) be a \(k\)-algebra, let \(I\subset R\) be an ideal, and let \(\Frak{p}\subset R_I^\wedge\) be a prime ideal.
  If \(\trdeg_k\kappa(\Frak{p})<\Frak{c}\), then \(I\subset \Frak{p}\).
\end{lem}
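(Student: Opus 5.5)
We argue by contraposition: assuming \(I\not\subset \Frak{p}\), we show \(\trdeg_k\kappa(\Frak{p})\ge\Frak{c}\). Choose \(f\in I\setminus\Frak{p}\). The key feature of \(R_I^\wedge\) is that it contains all convergent power series in \(f\): there is a ring homomorphism
\[
\phi\colon \mathbb{Z}[[t]]\otimes_{\mathbb{Z}} k\to R_I^\wedge,\qquad t\mapsto f
\]
(more precisely, \(k[[t]]\to R_I^\wedge\) when \(k\) is a field, and \(\mathbb{Z}[[t]]\to R_I^\wedge\) when \(k=\intg\)). It is well defined because \(f\in I\), so \(\sum a_n f^n\) converges \(I\)-adically. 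Composing with \(R_I^\wedge\to\kappa(\Frak{p})\) gives a homomorphism \(\psi\colon k[[t]]\to\kappa(\Frak{p})\) (resp.\ \(\mathbb{Z}[[t]]\to\kappa(\Frak{p})\)). Since \(\psi(t)=\bar f\neq0\), the kernel of \(\psi\) is a prime not containing \(t\).

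The next step is to show that \(\ker\psi=0\), or at least that the image of \(\psi\) has transcendence degree \(\Frak{c}\). When \(k\) is a field, \(k[[t]]\) is a DVR with maximal ideal \((t)\). Its primes are \(0\) and \((t)\), and \(t\notin\ker\psi\), so \(\ker\psi=0\). Hence \(\psi\) extends to an embedding \(k((t))\hookrightarrow\kappa(\Frak{p})\) over \(k\). Note that the kernel of \(k\to\kappa(\Frak{p})\) is \(0\), so \(\trdeg_k\) is just the transcendence degree over \(k\).

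When \(k=\intg\), the primes of \(\mathbb{Z}[[t]]\) not containing \(t\) need more care. Their contraction to \(\mathbb{Z}\) is either \(0\) or \((p)\). In the latter case, \(\psi\) factors through \(\mathbb{Z}[[t]]/(p)=\mathbb{F}_p[[t]]\), which is again a DVR with uniformizer \(t\), so this map is injective. In the former case, I will argue that \(\ker\psi\) is a prime \(\Frak{q}\) of \(\mathbb{Z}[[t]]\) with \(\Frak{q}\cap\mathbb{Z}=0\) and \(t\notin\Frak{q}\). Then \(\mathbb{Q}\otimes\mathbb{Z}[[t]]\) injects into \(\mathbb{Q}[[t]]\), and \(\mathbb{Z}[[t]]/\Frak{q}\) has transcendence degree over \(\mathbb{Q}\) at least \(\dim\) minus a small amount. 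It is cleaner to choose a subring that is \(\Frak{c}\)-transcendental and survives. I would use the standard fact that there is a family of \(\Frak{c}\) power series in \(\mathbb{Z}[[t]]\), for instance \(g_\alpha=\sum_n t^{\lfloor n\alpha\rfloor!}\) or lacunary series, that are algebraically independent in any nonzero quotient domain not killing \(t\). To make this rigorous, note that a height-one prime \(\Frak{q}\) with \(\Frak{q}\cap\mathbb{Z}=0\) and \(t\notin\Frak{q}\) would give \(\mathbb{Z}[[t]]/\Frak{q}\) as a domain finite over \(\mathbb{Z}_{(?)}\)-type rings. I expect to handle this instead via the valuation: \(\bar f\) lies in the maximal ideal of \(\kappa\)'s local ring image? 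This is the main obstacle.

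The final step is the standard fact that \(\trdeg_{K}K((t))=\Frak{c}\) (for \(K=k\) or \(\mathbb{F}_p\)), and similarly that \(\mathbb{Q}\otimes\mathbb{Z}[[t]]\) has transcendence degree \(\Frak{c}\) over \(\mathbb{Q}\). Given an injection, this yields \(\trdeg_k\kappa(\Frak{p})\ge\Frak{c}\), a contradiction. For the mixed-characteristic subcase with \(\Frak{q}\neq0\), my first attempt is to show that \(\Frak{q}=0\) is forced. A nonzero prime of \(\mathbb{Z}[[t]]\) meeting \(\mathbb{Z}\) trivially and not containing \(t\) is generated by a Weierstrass-type polynomial after inverting suitable primes. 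In that case the quotient is essentially \(\mathbb{Z}_p\)-like, and its fraction field still has transcendence degree \(\Frak{c}\) over \(\mathbb{Q}\), since it contains \(\mathbb{Z}_p\) for some \(p\), and \(\mathbb{Z}_p\) has transcendence degree \(\Frak{c}\) over \(\mathbb{Q}\). This case analysis is where I expect the main difficulty.
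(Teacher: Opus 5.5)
\paragraph{Gap: the case $k=\intg$ with $\ker\psi$ a nonzero prime meeting $\intg$ trivially.}
Your field case is correct and is the paper's argument: $t\mapsto f$ gives $k[[t]]\to\kappa(\Frak{p})$, $t$ maps to a unit, so the map extends to an embedding $k((t))\hookrightarrow\kappa(\Frak{p})$. For $k=\intg$, your subcases $\ker\psi\cap\intg=(p)$ and $\ker\psi=0$ are also fine. The gap is the remaining subcase, $\Frak{q}=\ker\psi\neq0$ with $\Frak{q}\cap\intg=0$ and $t\notin\Frak{q}$, which you leave open.

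This case really occurs, so your first idea, that $\Frak{q}=0$ is forced, is false. Take $R=\intg$, $I=(p)$, $\Frak{p}=0\subset\intg_p$ and $f=p$. Then $\intg[[t]]\to\mathbb{Q}_p$, $t\mapsto p$, has kernel $(t-p)$. Your fallback claims are not proved:
\begin{itemize}
\item The existence of lacunary series that remain algebraically independent in every quotient domain not killing $t$ is not a standard fact.
\item The claim that the quotient is ``$\intg_p$-like'' is true, but proving it needs the structure of height-one primes of $\intg[[t]]$: it is a UFD, and the relevant quotients are complete local domains finite over $\intg_p$. You have not supplied this, so as written the proof is incomplete.
\end{itemize}

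\paragraph{The missing idea: count elements instead of identifying the kernel.}
For $\epsilon\neq\epsilon'\in\{0,1\}^{\nat}$ with first difference at index $n_0$,
\[
\sum_n\epsilon_nf^n-\sum_n\epsilon'_nf^n=f^{n_0}\bigl(\pm1+fg\bigr)\quad\text{for some } g\in R_I^\wedge.
\]
Since $f$ lies in the Jacobson radical of $R_I^\wedge$, the factor $\pm1+fg$ is a unit, and $f^{n_0}\notin\Frak{p}$. So $\epsilon\mapsto\sum_n\epsilon_nf^n$ is injective into $\kappa(\Frak{p})$, and $\#\kappa(\Frak{p})\ge\Frak{c}$.

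A field of transcendence degree $\tau$ over its prime field has cardinality at most $\max(\aleph_0,\tau)$. Hence $\trdeg_{\intg}\kappa(\Frak{p})\ge\Frak{c}$. This is the paper's argument for $k=\intg$, and it handles both characteristics at once.

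Inside your own setup the same trick works: $\epsilon\mapsto\sum_n\epsilon_nt^n$ is injective modulo any prime of $\intg[[t]]$ not containing $t$. The difference of two such series is $t^{n_0}$ times a power series with constant term $\pm1$, which is a unit. This makes your case analysis of $\ker\psi$ unnecessary.
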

\begin{proof}
  Assume that \(I\not\subset \Frak{p}\).
  Take \(f\in I\) such that \(f\notin \Frak{p}\).

  First assume that \(k\) is a field.
  Then there is a homomorphism \(k((t))\to \kappa(\Frak{p})\) sending \(t\) to \(f\).
  Hence \(\trdeg_k \kappa(\Frak{p})\ge \trdeg_k k((t))\ge \Frak{c}\), a contradiction.

  Next assume that \(k=\intg\).
  We claim that the map
  \[
  \{0,1\}^\nat\to R_I^\wedge/\Frak{p}:\{\epsilon_n\}_{n\in\nat}\mapsto \sum_{n}\epsilon_n f^n
  \]
  is injective.
  Take \(\{\epsilon_n\}_n\neq \{\epsilon_n'\}_{n}\in \{0,1\}^\nat\).
  Let \(n_0\) be the smallest integer such that \(\epsilon_{n_0}\neq \epsilon_{n_0}'\).
  Then
  \[
  \sum_n \epsilon_n f^n-\sum_n \epsilon_n' f^n=f^{n_0}(\pm1 +f(\sum_{n>n_0}(\epsilon_n-\epsilon_n') f^{n-n_0-1})).
  \]
  Since \(f\) lies in the radical of \(R_I^\wedge\), the difference is nonzero in \(R_I^\wedge/\Frak{p}\).
  This proves the claim.
  Since \(\#\kappa(\Frak{p})\ge \#\{0,1\}^\nat=\Frak{c}\), we have \(\trdeg_\intg \kappa(\Frak{p})\ge \Frak{c}\), a contradiction.
\end{proof}

\begin{lem}\label{lem:open_closed_decomposition}
  Let \(D,E\in\CAlg(\pr)\), and let \(I\subset D\) be a smashing ideal.
  Assume that, for \(x,y\in E\), \(x\otimes y\cong 0\) implies \(x\cong 0\) or \(y\cong 0\).
  Then
  \[
  \Map_{\CAlg(\pr)}(D,E)\cong \Map_{\CAlg(\pr)}(D/I,E)\coprod \Map_{\CAlg(\pr)}(I,E).
  \]
\end{lem}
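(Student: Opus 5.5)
We use the left idempotent \(\gamma\colon e\to 1_D\) attached to the smashing ideal \(I\), with right idempotent \(f=\mathrm{cofib}(\gamma)\), so that \(e\otimes f\cong 0\) and there is a cofiber sequence \(e\to 1_D\to f\). Recall that \(I\simeq D\otimes e\simeq\mathrm{Mod}_e(D)\) as a tt-\(\infty\)-category, with unit \(e\) and localization \(\iota_I^\mR=e\otimes-\). Likewise \(D/I\simeq I^\perp\simeq\mathrm{Mod}_f(D)\) with unit \(f\) and localization \(L_I=f\otimes-\). Both \(e\otimes-\colon D\to I\) and \(f\otimes -\colon D\to D/I\) are morphisms in \(\CAlg(\pr)\). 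Precomposition with them gives a map
\[
\Map_{\CAlg(\pr)}(D/I,E)\coprod \Map_{\CAlg(\pr)}(I,E)\to \Map_{\CAlg(\pr)}(D,E).
\]
The plan is to show that this map is an equivalence.

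The first step is to show that each of the two precomposition maps is an inclusion of path components. The functors \(D\to D/I\) and \(D\to I\) are smashing localizations, i.e.\ idempotent algebras in \(\CAlg(\pr)\): \(D/I\otimes_D D/I\simeq D/I\) and \(I\otimes_D I\simeq I\). A standard fact (Lurie, HA 4.8.2) says that maps out of an idempotent algebra form a full subspace. Concretely, \(\Map(D/I,E)\) is the union of those components of \(\Map(D,E)\) consisting of \(F\) with \(F(e)\cong 0\), equivalently \(F(f)\) invertible, equivalently \(F(1)\to F(f)\) an equivalence. Similarly, \(\Map(I,E)\) is the union of those components of \(\Map(D,E)\) consisting of \(F\) with \(F(f)\cong 0\), equivalently \(F(\gamma)\) an equivalence.

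The second step is to show that these two sets of components are disjoint and together exhaust \(\Map(D,E)\). Here the hypothesis on \(E\) enters. For any \(F\colon D\to E\) we have \(F(e)\otimes F(f)\cong F(e\otimes f)\cong 0\), so \(F(e)\cong 0\) or \(F(f)\cong 0\). If both vanished, the cofiber sequence would give \(1_E\cong F(1_D)\cong 0\), hence \(E=0\). In the degenerate case \(E=0\), both mapping spaces are contractible, and the claimed decomposition cannot hold as stated; we will assume \(E\neq 0\), which is implicit in the hypothesis if one reads "\(x\cong 0\) or \(y\cong 0\)" as requiring the unit to be nonzero. With \(E\neq 0\), each \(F\) lies in exactly one of the two subspaces, which gives the coproduct decomposition.

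The main point to verify carefully is the first step: that a symmetric monoidal colimit-preserving \(F\) with \(F(e)\cong 0\) factors essentially uniquely through \(f\otimes-\). We will cite the universal property of idempotent algebras to handle this. For \(I\), the argument goes as follows. The condition \(F(f)=0\) makes \(F(e)\to 1_E\) an equivalence. Then \(F\) factors through \(\mathrm{Mod}_e(D)\), because \(F\) is \(D\)-linear onto \(E\) and \(E\simeq E\otimes_D \mathrm{Mod}_e(D)\) whenever \(F(e)\) is the unit. The space of such factorizations is contractible by the idempotence of \(e\).
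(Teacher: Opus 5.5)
Your proposal is correct and follows the paper's proof. Both use the idempotents \(e\to 1_D\to f\), get \(F(e)\cong 0\) or \(F(f)\cong 0\) from \(e\otimes f\cong 0\) and the hypothesis on \(E\), and conclude by the universal properties of the two smashing localizations; you usefully make explicit that these give inclusions of path components.

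Your caveat \(E\neq 0\) is a genuine correction: the paper's claim that such an \(F\) ``does not factor through \(I\)'' tacitly uses \(1_E\not\cong 0\), which is harmless in its application \(E=D(K)\). The notation \(\mathrm{Mod}_e(D)\) is a slip, since \(e\) is an idempotent coalgebra, not an algebra. This does not affect the argument, because what you actually use is the idempotence of \(I\) over \(D\) in \(\pr\), which is correct.
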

\begin{proof}
  Take \(F:D\to E\) in \(\CAlg(\pr)\).
  By the universal property of the quotient, it suffices to prove that \(F\) factors through exactly one of \(D/I\) and \(I\).
  Let \(e\to 1_D\) and \(1_D\to f\) be the left and right idempotents corresponding to \(I\).
  Since \(e\otimes f\cong0\), we have \(F(e)\cong 0\) or \(F(f)\cong 0\).
  If \(F(e)\cong 0\), then \(F\) factors through \(D/I\), and it does not factor through \(I\).
  The case \(F(f)\cong 0\) is similar.
\end{proof}

\begin{lem}\label{lem:smashing_descent_of_points}
  Let \(D,E\) be as in Lemma \ref{lem:open_closed_decomposition}.
  Let \(I_1,I_2\subset D\) be smashing ideals, let \(I_3=\oang{I_1,I_2}\), and let \(e_i\to 1_D\) be the corresponding left idempotents for \(i=1,2,3\).
  Assume that \(e_1\otimes e_2\cong 0\).
  Then the following diagram is coCartesian:
  \[
  \xymatrix{
    \Map_{\CAlg(\pr)}(D/I_3,E)\ar[r]\ar[d]&\Map_{\CAlg(\pr)}(D/I_2,E)\ar[d]\\
    \Map_{\CAlg(\pr)}(D/I_1,E)\ar[r]&\Map_{\CAlg(\pr)}(D,E).
  }
  \]
\end{lem}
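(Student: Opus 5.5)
The plan is to identify each of the four mapping spaces with a union of path components of \(\Map_{\CAlg(\pr)}(D,E)\). Once that is done, the square becomes a square of inclusions of component-unions, and checking that it is coCartesian reduces to a statement about sets of components.

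\emph{Step 1: each \(\Map(D/I_i,E)\) is a union of components.} For a smashing ideal \(I\subset D\), precomposition with the quotient \(L_I\colon D\to D/I\) gives a map \(\Map_{\CAlg(\pr)}(D/I,E)\to \Map_{\CAlg(\pr)}(D,E)\). I will use the universal property of the Verdier quotient in \(\CAlg(\pr)\). Since \(D/I\simeq D\otimes_D \mathrm{Mod}_f(D)\) for the right idempotent \(f\), it is an idempotent algebra, so \(L_I\) is an epimorphism in \(\CAlg(\pr)\). Consequently this map is \((-1)\)-truncated, with image exactly those \(F\) satisfying \(F(I)\cong 0\), equivalently \(F(e)\cong 0\). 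The condition \(F(e)\cong0\) depends only on the path component of \(F\). By Lemma \ref{lem:open_closed_decomposition}, applied to \(I=I_i\), the complementary components are exactly \(\Map(I_i,E)\). Hence for \(i=1,2,3\) we may write \(\Map(D/I_i,E)=M_i\), a union of components of \(M=\Map(D,E)\), with \(M_i=\{F\mid F(e_i)\cong0\}\). The same reasoning applies to the maps \(\Map(D/I_3,E)\to\Map(D/I_j,E)\), since \(D/I_3\) is a further quotient of \(D/I_j\).

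\emph{Step 2: compute the intersection and the union.} For the intersection: \(F\) kills \(I_3=\oang{I_1,I_2}\) iff it kills \(I_1\) and \(I_2\). This holds because the kernel of a symmetric monoidal continuous functor is a localizing ideal. Hence \(M_3=M_1\cap M_2\). For the union: \(F(e_1)\otimes F(e_2)\cong F(e_1\otimes e_2)\cong F(0)\cong 0\), and the hypothesis on \(E\) then gives \(F(e_1)\cong0\) or \(F(e_2)\cong0\). Thus \(M=M_1\cup M_2\).

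\emph{Step 3: conclude.} In spaces, if \(M=M_1\cup M_2\) with \(M_1,M_2\) unions of path components, then the square \(M_1\cap M_2\to M_1,M_2\to M\) is a pushout. Indeed, \(M\) splits as the coproduct \((M_1\setminus M_2)\amalg(M_1\cap M_2)\amalg(M_2\setminus M_1)\), and the pushout of \(M_1\leftarrow M_1\cap M_2\to M_2\) is computed componentwise to be the same coproduct. This proves that the diagram is coCartesian.

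The main point needing care is Step 1: the claim that \(\Map(D/I,E)\to\Map(D,E)\) is an inclusion of components. This rests on the idempotence of \(D/I\) as a \(D\)-algebra. That idempotence holds because \(I\) is smashing, so the quotient is given by tensoring with the idempotent algebra \(f\). Everything after Step 1 is formal.
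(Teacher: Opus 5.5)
Your proof is correct and has the same structure as the paper's. Both use the universal property of smashing quotients to identify each \(\Map_{\CAlg(\pr)}(D/I_i,E)\) with the components of \(\Map_{\CAlg(\pr)}(D,E)\) consisting of functors that kill \(I_i\), and then show that every \(F\) kills \(I_1\) or \(I_2\). The only difference is in that last step. You get it directly from \(F(e_1)\otimes F(e_2)\cong F(e_1\otimes e_2)\cong 0\) together with the hypothesis on \(E\). The paper instead applies Lemma \ref{lem:open_closed_decomposition} to \(I_1\) and uses \(e_1\otimes f_2\cong e_1\) to see that \(\iota_{I_1}^\mR\) factors through \(D/I_2\). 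Your version is slightly more direct, and it also spells out the identity \(M_3=M_1\cap M_2\) and the componentwise pushout, which the paper leaves implicit.
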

\begin{proof}
  Let \(F:D\to E\).
  By the universal property of the quotient, it suffices to prove that \(F\) factors through \(D/I_1\) or \(D/I_2\).
  Let \(1_D\to f_2\) be the right idempotent corresponding to \(I_2\).
  The assumption \(e_1\otimes e_2\cong 0\) implies that \(e_1\otimes f_2\cong e_1\).
  Thus \(\iota_{I_1}^\mR=-\otimes e_1:D\to I_1\) factors through \(L_{I_2}=-\otimes f_2:D\to D/I_2\).
  The assertion follows from Lemma \ref{lem:open_closed_decomposition}.
\end{proof}

Let \(k\) be a ring.
Let \(\fld_k\) be the full subcategory of \(\CAlg_k(\pr)\) whose objects are \(D(K)\), where \(K\) is a field of transcendence degree \(<\Frak{c}\) over \(k\).
For a Noetherian formal scheme \(\Frak{X}\) over \(k\), let \((\fld_k)_{\eNuc(\Frak{X})/}\) be the comma category of \(\eNuc(\Frak{X})\) and \(\fld_k\) in \(\CAlg_k(\pr)\).

For an \(\infty\)-category \(C\), we denote by \(\pi_0(C)\) the class of connected components.

\begin{thm}\label{thm:points_reconstruction}
  Let \(k\) be \(\intg\) or a field, and let \(\Frak{X}\) be a Noetherian formal scheme over \(k\).
  Then the map of sets
  \[
  \phi:\{x\in \Frak{X}\mid \trdeg_k(\kappa(x))<\Frak{c}\}\to \pi_0((\fld_k)_{\eNuc(\Frak{X})/}): x\mapsto (\eNuc(\Frak{X})\to \eNuc(\kappa(x))\cong D(\kappa(x)))
  \]
  is bijective.
  If \(\Frak{X}\in\smFormalsch_k\), the same assertion holds for \(\csNuc(\Frak{X})\).
\end{thm}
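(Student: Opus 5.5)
We plan to reduce to the affine case, pass from \(\eNuc\) to \(\Dtors\), and then to \(D(A/I^m)\), where field-valued points are classical.

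\emph{Reduction to the affine case.} Choose a finite affine open cover \(\{U_i\}\) of \(\Frak{X}\). By Proposition \ref{prop:descent_for_eNuc}, each restriction \(\eNuc(\Frak{X})\to\eNuc(U_i)\) is a quotient by a smashing ideal. For two opens the complementary closed pieces give smashing ideals \(I_1,I_2\) with \(e_1\otimes e_2\cong 0\), because the closed complements are disjoint on the torsion part. Since \(D(K)\) has no tensor-zero divisors, Lemma \ref{lem:smashing_descent_of_points} applies. It says every \(F\colon\eNuc(\Frak{X})\to D(K)\) factors through some \(\eNuc(U_i)\), and that the component sets glue as a pushout. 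The same gluing holds for points \(x\in\Frak{X}\). Inducting on the number of opens, bijectivity of \(\phi\) reduces to \(\Frak{X}=\Spf(A_I^\wedge)\) affine; naturality of \(\phi\) in open restrictions is clear. For \(\csNuc\) we use the hypothesis \(\Frak{X}\in\smFormalsch_k\) in exactly this step.

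\emph{Affine case.} Let \(F\colon\eNuc(U)\to D(K)\) be a \(k\)-linear symmetric monoidal continuous functor with \(\trdeg_k K<\Frak{c}\). Then \(\End(1)\) yields a ring map \(A_I^\wedge\to K\) with kernel a prime \(\Frak{p}\) satisfying \(\trdeg_k\kappa(\Frak{p})\le\trdeg_kK<\Frak{c}\). Lemma \ref{lem:trancendental_degree_of_residue_field} then gives \(I\subset\Frak{p}\).

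We next show that \(F\) kills the right idempotent \(f\) of the smashing ideal \(\Dtors(U)\subset\eNuc(U)\); this uses Lemma \ref{lem:mcg_is_smashing_ideal} and Theorem \ref{thm:mcg_is_torsions}. Apply Lemma \ref{lem:open_closed_decomposition}: \(F\) factors through either \(\Dtors(U)\) or \(\eNuc(U)/\Dtors(U)\). In the quotient, the image of the Koszul object \(\Kos(A_I^\wedge;a_1,\dots,a_n)\) vanishes. If \(F\) factored through the quotient, then \(F(\Kos)=\Kos(K;\bar a_i)\) would be \(0\), so some \(a_i\) would be a unit in \(K\), contradicting \(I\subset\Frak{p}\). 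So \(F\) factors through \(\Dtors(U)\).

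Then \(F\) is a functor out of \(\Dtors(U)=\tors{I}(A)\), and its restriction to compact objects lands in \(D(K)\). The ring map \(A\to K\) kills \(I\). Lemma \ref{lem:quotient_of_nuclear_modules} with \(B=K\) gives \(\eNuc(U)\otimes_{D(A_I^\wedge)}D(K)\cong D(K)\), so \(F\) is determined by the ring map \(A/I\to K\). Hence
\[
\Map_{\CAlg_k(\pr)}(\eNuc(U),D(K))\simeq\Hom_k(A_I^\wedge,K)^{I\text{-nil}}=\Hom_k(A/\sqrt{I},K),
\]
a discrete set.

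\emph{Identifying \(\pi_0\) with points.} Connected components of the comma category over \(\fld_k\) are maps \(A/\sqrt I\to K\) modulo field extensions. Each such map has a canonical initial representative \(A\to\kappa(\Frak{p})\) with \(\Frak{p}\supset I\), so the components correspond to the points of \(V(I)=|U|\). This gives bijectivity of \(\phi\). The transcendence bound keeps everything inside \(\fld_k\): \(\kappa(\Frak{p})\) qualifies precisely because \(\trdeg_k\kappa(x)<\Frak{c}\).

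For \(\csNuc\) the argument is identical, using \(\Dtors(U)\subset\csNuc(U)\) as a smashing ideal (Theorem \ref{thm:mcg_is_torsions}) and the \(\csNuc\) part of Lemma \ref{lem:quotient_of_nuclear_modules}.

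The main obstacle is the step excluding functors that factor through the open part \(\eNuc(U)/\Dtors(U)\). It relies on Lemma \ref{lem:trancendental_degree_of_residue_field} together with the claim that the unit's endomorphisms control \(F(\Kos)\). A secondary obstacle is checking that \(e_1\otimes e_2\cong0\) holds for the gluing in the reduction step.
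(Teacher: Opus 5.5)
Your proposal is correct and follows the paper's proof. Both arguments reduce to the affine case via Lemma \ref{lem:smashing_descent_of_points}, extract \(A_I^\wedge\to K\) from \(\End(1)\), obtain \(I\subset\Frak{p}\) from Lemma \ref{lem:trancendental_degree_of_residue_field}, and factor \(F\) using Lemma \ref{lem:quotient_of_nuclear_modules}; the paper takes \(B=\kappa(\Frak{p})\), which directly exhibits the initial object of the component. Your intermediate Koszul/open--closed argument that \(F\) factors through \(\Dtors(U)\) is correct but redundant, since Lemma \ref{lem:quotient_of_nuclear_modules} already yields that factorization once \(I\subset\Frak{p}\) is known.
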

\begin{proof}
Injectivity is clear.
We prove surjectivity.
By Lemma \ref{lem:smashing_descent_of_points}, we may assume that \(\Frak{X}=\Spf(R_I^\wedge)\) is affine.
Take \(F:\eNuc(\Frak{X})\to D(K)\) in \((\fld_k)_{\eNuc(\Frak{X})/}\).
It induces a homomorphism
\[
R\to R_I^\wedge=\End(1_{\eNuc(\Frak{X})})\to \End(1_{D(K)})=K.
\]
Let \(\Frak{p}\subset R_I^\wedge\) be the kernel of the induced map \(R_I^\wedge\to K\).
By Lemma \ref{lem:trancendental_degree_of_residue_field}, we have \(I\subset \Frak{p}\).
By Lemma \ref{lem:quotient_of_nuclear_modules}, we have \(\eNuc(\Frak{X})\otimes_{D(R_I^\wedge)}D(\kappa(\Frak{p}))\cong D(\kappa(\Frak{p}))\), and \(F\) factors through this category (in fact, it is an initial object of the connected component of \((\fld_k)_{\eNuc(\Frak{X})/}\)).
The same argument applies to \(\csNuc\).
\end{proof}
In particular, if \(\Frak{X}\) is nearly of \(<\Frak{c}\)-type, then the set of points of \(\Frak{X}\) is reconstructed.

\begin{cor}
  Let \(k\) be \(\intg\) or a field, and let \(\Frak{X}\) be a Noetherian formal scheme over \(k\).
  Then
  \[
  \Dtors(\Frak{X})^\perp\subset \bigcap_{F\in (\fld_k)_{\eNuc(\Frak{X})/}} \Ker(F).
  \]
  If \(\Frak{X}\) is nearly of \(<\Frak{c}\)-type over \(k\), then equality holds.
  If \(\Frak{X}\in \smFormalsch_k\), the same assertion holds for \(\csNuc(\Frak{X})\).
\end{cor}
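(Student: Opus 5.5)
We prove the inclusion by showing that every \(F\in(\fld_k)_{\eNuc(\Frak{X})/}\) kills \(\Dtors(\Frak{X})^\perp\). For equality, we use Theorem~\ref{thm:points_reconstruction} to see that the fields \(\kappa(x)\) detect membership in \(\Dtors(\Frak{X})^\perp\) when every point has transcendence degree \(<\Frak{c}\).

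\emph{Inclusion.} Since \(\Dtors(\Frak{X})=\eNuc(\Frak{X})^\mcg\) is a smashing ideal (Theorem~\ref{thm:mcg_is_torsions} and Lemma~\ref{lem:mcg_is_smashing_ideal}), let \(e\to 1\to f\) be its idempotent triangle, so that \(\Dtors(\Frak{X})^\perp=f\otimes\eNuc(\Frak{X})\). Given \(F:\eNuc(\Frak{X})\to D(K)\), we first reduce to the affine case. As \(D(K)\) satisfies the hypothesis of Lemma~\ref{lem:open_closed_decomposition}, Lemma~\ref{lem:smashing_descent_of_points} together with Proposition~\ref{prop:descent_for_eNuc} shows that \(F\) factors through \(\eNuc(U)\) for some affine open \(U=\Spf(R_I^\wedge)\). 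The image of \(\Dtors(\Frak{X})^\perp\) in \(\eNuc(U)\) lies in \(\Dtors(U)^\perp\), because restriction is compatible with the torsion idempotents.

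In the affine case, the proof of Theorem~\ref{thm:points_reconstruction} (using Lemma~\ref{lem:trancendental_degree_of_residue_field} and Lemma~\ref{lem:quotient_of_nuclear_modules}) shows that \(F\) factors through
\[
\eNuc(U)\to \eNuc(U)\otimes_{D(R_I^\wedge)}D(\kappa(\Frak{p}))\cong D(\kappa(\Frak{p}))
\]
with \(I\subset\Frak{p}\). By the proof of Lemma~\ref{lem:quotient_of_nuclear_modules}, this functor factors through \(\iota^\mR_{\Dtors(U)}=e\otimes-\). Hence it kills \(f\otimes x\), since \(e\otimes f=0\). This gives the inclusion.

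\emph{Equality.} Assume \(\Frak{X}\) is nearly of \(<\Frak{c}\)-type, and let \(y\) lie in every kernel. By Theorem~\ref{thm:points_reconstruction}, every point \(x\in\Frak{X}\) gives an object of the comma category, so \(y\mapsto 0\) in \(D(\kappa(x))\) for all \(x\). We must show \(e\otimes y=0\). This is a statement in \(\Dtors(\Frak{X})\) and is Zariski local, so we may take \(\Frak{X}=\Spf(R_I^\wedge)\), and \(e\otimes y\) corresponds to a torsion complex \(M\in\tors{I}(R)\).

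The functor to \(D(\kappa(\Frak{p}))\), for \(\Frak{p}\in V(I)\), restricted to \(\Dtors\) is \(M\mapsto M\otimes_R\kappa(\Frak{p})\). An \(I\)-torsion complex is supported in \(V(I)\), so it vanishes as soon as \(M\otimes\kappa(\Frak{p})=0\) for all \(\Frak{p}\in V(I)\). This is the detection of objects of \(D(R)\) by residue fields for Noetherian \(R\) (Neeman's classification). For this we need \(R\) Noetherian; we may choose \(R=R_I^\wedge\), which is Noetherian and has \(\tors{I}(R_I^\wedge)=\tors{I}(R)\). Hence \(e\otimes y=0\), so \(y\in\Dtors(\Frak{X})^\perp\).

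\emph{The \(\csNuc\) case.} The same argument works for \(\Frak{X}\in\smFormalsch_k\), using the smashing-kernel hypothesis to apply Lemma~\ref{lem:smashing_descent_of_points}.

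\emph{Main obstacle.} The delicate step is that \(\kappa(\Frak{p})\otimes_{D(R_I^\wedge)}-\) on \(\eNuc(U)\) factors through \(\iota^\mR\) for the torsion ideal, so that its restriction to \(\Dtors\) is the naive tensor product. This should follow from the chain of equivalences in the proof of Lemma~\ref{lem:quotient_of_nuclear_modules}.
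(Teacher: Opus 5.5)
Your proposal is correct and follows essentially the paper's argument. Every field-valued point factors through \(\iota^\mR=e\otimes-\) and so kills \(\Dtors(\Frak{X})^\perp\); when \(\Frak{X}\) is nearly of \(<\Frak{c}\)-type, nonzero objects of \(\Dtors(\Frak{X})\) are detected by the points \(x^*\). The differences are cosmetic: you re-derive the affine factorization rather than citing Theorem~\ref{thm:points_reconstruction} together with the constancy of kernels on connected components of the comma category, and you supply the residue-field detection for Noetherian rings (Neeman) that the paper leaves implicit.
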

\begin{proof}
If \(F,G\in (\fld_k)_{\eNuc(\Frak{X})/}\) lie in the same connected component, then \(\Ker(F)=\Ker(G)\).
Thus we may regard \(F\) as running over \(x^*:\eNuc(\Frak{X})\to D(\kappa(x))\) for points \(x\in\Frak{X}\) such that \(\trdeg_k\kappa(x)<\Frak{c}\).
Each \(x^*\) factors through \(\Dtors(\Frak{X})\), and \(\iota^\mR(\Dtors(\Frak{X})^\perp)\cong 0\), where \(\iota:\Dtors(\Frak{X})\to \eNuc(\Frak{X})\) denotes the inclusion.
This proves the inclusion.

Assume that \(\Frak{X}\) is nearly of \(<\Frak{c}\)-type.
For any nonzero object \(F\in \eNuc(\Frak{X})/\Dtors(\Frak{X})^\perp\cong \Dtors(\Frak{X})\), there exists \(x\in \Frak{X}\) such that \(x^*(F)\neq0\).
Hence equality holds.
The same argument applies to \(\csNuc\).
\end{proof}

\begin{thm}\label{thm:fully_faithfulness_strong_ver}
  Let \(k\) be \(\intg\) or a field, and let \(\Frak{X},\Frak{Y}\) be Noetherian formal schemes over \(k\).
  Assume that \(\Frak{Y}\) is nearly of \(<\Frak{c}\)-type over \(k\).
  Then
  \[
  \Map_{\CAlg_k(\dualpr)}(\eNuc(\Frak{X}),\eNuc(\Frak{Y}))\cong \Hom(\Frak{Y},\Frak{X}).
  \]
  Moreover, if \(\Frak{X}\in\smFormalsch_k\), then
  \[
  \Map_{\CAlg_k(\dualpr)}(\csNuc(\Frak{X}),\eNuc(\Frak{Y})) \cong \Hom(\Frak{Y},\Frak{X}).
  \]
  If \(\Frak{X},\Frak{Y}\in\smFormalsch_k\), then
  \[
  \Map_{\CAlg_k(\dualpr)}(\csNuc(\Frak{X}),\csNuc(\Frak{Y}))\cong\Hom(\Frak{Y},\Frak{X}).
  \]
\end{thm}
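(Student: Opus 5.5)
The plan is to reduce Theorem \ref{thm:fully_faithfulness_strong_ver} to Corollary \ref{cor:fully_faithful_weak_ver}. The difference is that morphisms in \(\CAlg_k(\dualpr)\) are only strongly continuous symmetric monoidal \(k\)-linear functors. Those in \(\higherlargett\) must in addition respect the maximal strongly compactly generated ideals and be cg-preserving on them. So it suffices to show that every morphism \(F\colon\eNuc(\Frak{X})\to\eNuc(\Frak{Y})\) in \(\CAlg_k(\dualpr)\) automatically satisfies these conditions. The \(k\)-linearity only enters through the hypothesis on transcendence degrees, and the mapping spaces over \(D(k)\) agree with those in \(\higherlargett\) once the extra conditions are automatic. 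The same applies to \(\csNuc(\Frak{X})\) as the source, and, for the third statement, to \(\csNuc(\Frak{Y})\) as the target via the \((-1)\)-truncation argument of Corollary \ref{cor:fully_faithful_weak_ver}(3).

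The key step is the factorization: \(\iota^\mR_{\Dtors(\Frak{Y})}\circ F\) should factor through \(\iota^\mR_{\Dtors(\Frak{X})}\). Equivalently, \(F\) should send \(\Dtors(\Frak{X})^\perp\) into \(\Dtors(\Frak{Y})^\perp\), using Theorem \ref{thm:mcg_is_torsions} to identify the \(\mcg\)-ideals with torsion categories and Lemma \ref{lem:mcg_is_smashing_ideal} for smashingness. I would argue via the preceding corollary. Since \(\Frak{Y}\) is nearly of \(<\Frak{c}\)-type, \(\Dtors(\Frak{Y})^\perp=\bigcap_G\Ker(G)\) over \(G\in(\fld_k)_{\eNuc(\Frak{Y})/}\). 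For each such \(G\), the composite \(G\circ F\) lies in \((\fld_k)_{\eNuc(\Frak{X})/}\), and by that same corollary's inclusion its kernel contains \(\Dtors(\Frak{X})^\perp\). Hence \(F(\Dtors(\Frak{X})^\perp)\subset\bigcap_G\Ker(G)=\Dtors(\Frak{Y})^\perp\). Strong continuity then gives the induced homomorphism \(F^\mcg\colon\Dtors(\Frak{X})\to\Dtors(\Frak{Y})\). For the \(\csNuc(\Frak{X})\) source, I would use the \(\smFormalsch_k\) version of the corollary in the same way.

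Next I would check that \(F^\mcg\) is cg-preserving. The ideal \(\Dtors(\Frak{Y})\subset\eNuc(\Frak{Y})\) is weakly finitely compactly generated, since \(\Frak{Y}\) is quasi-compact and locally \(\Dtors\) is generated by one Koszul complex. By the remark preceding Theorem \ref{thm:reconstruction} (the argument of Proposition \ref{prop:ideals}), this condition is then automatic. Concretely, a compact \(a\in\Dtors(\Frak{X})\) is compact in \(\eNuc(\Frak{X})\), so \(F(a)\) is compact in \(\eNuc(\Frak{Y})\). Then \(\oang{F^\mcg(a)}=\oang{F(a)\otimes d}\) with \(d\) a compact generator, which is finitely compactly generated. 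Compact objects are dualizable (they are perfect complexes on affines), so all objects are in \(\higherlargett\).

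With these conditions verified, \(F\) is a morphism in \(\higherlargett\), and Corollary \ref{cor:fully_faithful_weak_ver}(1),(2),(3) gives the three equivalences. The \(k\)-linearity is compatible with this, since the \(\Hom(A/I^m,B/J^n)\) computation in that proof becomes \(k\)-algebra homomorphisms. The main obstacle is the factorization step: one must ensure that \(G\circ F\) genuinely lands in \(\fld_k\)-points with the correct kernel behavior. That is, Lemma \ref{lem:trancendental_degree_of_residue_field} must apply to the composite, which needs the \(k\)-linearity and the \(<\Frak{c}\) bound, and the equality case of the corollary must be used only on the \(\Frak{Y}\) side.
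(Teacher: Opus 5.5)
Your proposal is correct and follows the paper's proof. You reduce to Corollary~\ref{cor:fully_faithful_weak_ver} by showing \(\Dtors(\Frak{X})^\perp\subset F^{-1}(\Dtors(\Frak{Y})^\perp)\), applying the inclusion from the preceding corollary to \(G\circ F\) on the \(\Frak{X}\) side and the equality, which needs the \(<\Frak{c}\) hypothesis, on the \(\Frak{Y}\) side. Your explicit check that \(F^\mcg\) is cg-preserving, because \(\Dtors(\Frak{Y})\) is weakly finitely compactly generated in \(\eNuc(\Frak{Y})\), fills in a step the paper leaves to the remark preceding Theorem~\ref{thm:reconstruction}.
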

\begin{proof}
  We prove the first equivalence.
  Let \(F\in \Map_{\CAlg_k(\dualpr)}(\eNuc(\Frak{X}),\eNuc(\Frak{Y}))\).
  By Corollary \ref{cor:fully_faithful_weak_ver}, it suffices to prove that the composition \(\eNuc(\Frak{X})\os{F}{\to} \eNuc(\Frak{Y})\to \Dtors(\Frak{Y})\) factors through \(\Dtors(\Frak{X})\), or equivalently, that \(\Dtors(\Frak{X})^\perp\subset F^{-1}(\Dtors(\Frak{Y})^\perp)\).
  Precomposing with \(F\) defines a functor \((\fld_k)_{\eNuc(\Frak{Y})/}\to (\fld_k)_{\eNuc(\Frak{X})/}\).
  Thus,
  \[
  \Dtors(\Frak{X})^\perp\subset \bigcap_{G\in(\fld_k)_{\eNuc(\Frak{X})/}}\Ker(G)\subset F^{-1}\left(\bigcap_{H\in (\fld_k)_{\eNuc(\Frak{Y})/}}\Ker(H)\right)=F^{-1}\left(\Dtors(\Frak{Y})^\perp\right).
  \]
  The same argument proves the remaining assertions.
\end{proof}

\appendix

\section{Rigidification and smashing localization}

The purpose of this appendix is to prove Proposition \ref{prop:descent_for_eNuc}.
This appendix can be read independently of the main body of the paper.

By the universal property of rigidification, \(\eNuc(\Frak{X})\) (Definition \ref{defn:eNuc}) satisfies Zariski descent in the \(\infty\)-category of rigid tt-\(\infty\)-categories.
In fact, the restriction functors are smashing localizations, and the same descent statement holds in \(\pr\).
We now prove these statements.

\begin{lem}\label{lem:characterization_of_smashing_localizations}
  Let \(F:C\to D\) be a homomorphism of tt-\(\infty\)-categories.
  Assume that \(F\) is essentially surjective and that there exists a right idempotent \(\gamma:1_C\to f\) in \(C\) such that \(F^\mR F(x)\cong x\otimes f\) naturally in \(x\in C\), and such that, under this equivalence, the unit map \(x\to F^\mR F(x)\) is identified with \(x\otimes\gamma:x\to x\otimes f\).
  Then \(F\) is a quotient functor and its kernel is a smashing ideal.
\end{lem}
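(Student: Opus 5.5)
The plan is to identify \(D\) with the full subcategory \(C\otimes f\subset C\) of \(f\)-modules, equivalently \(\ker(e\otimes-)\), where \(e\to 1_C\to f\) is the fibre sequence. The kernel of \(F\) will then turn out to be the smashing ideal \(e\otimes C\).

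\emph{Step 1: \(F^\mR\) is fully faithful.} The counit \(FF^\mR\to\id_D\) is an equivalence on objects of the form \(F(x)\). By the triangle identity, the composite \(F(x)\to FF^\mR F(x)\to F(x)\) is the identity. Its first map is \(F\) applied to the unit \(x\to x\otimes f\), i.e. \(F(x\otimes\gamma)\).

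To see that \(F(x\otimes\gamma)\) is an equivalence, use that \(F\) is symmetric monoidal: \(F(x\otimes\gamma)\cong F(x)\otimes F(\gamma)\). Next, \(F(\gamma)\) is invertible. Indeed, \(F^\mR F(\mathrm{cone}(\gamma))\cong\mathrm{cone}(\gamma)\otimes f\cong0\), because \(\gamma\otimes f\) is an equivalence (\(\gamma\) is a right idempotent). Also \(F^\mR\) is conservative on the essential image of \(F\): if \(F^\mR(y)=0\) then \(\Map(F(x),y)=\Map(x,F^\mR y)=0\) for all \(x\), so \(y=0\) by essential surjectivity. Hence \(F(\mathrm{cone}\,\gamma)=0\).

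So the first map is an equivalence, hence so is the counit at \(F(x)\). Since \(F\) is essentially surjective, the counit is an equivalence everywhere, and \(F^\mR\) is fully faithful.

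\emph{Step 2: \(F\) is the quotient by its kernel.} Since \(F^\mR\) is fully faithful, \(F\) is a Bousfield localization. Its kernel \(K\) consists of the \(x\) with \(F^\mR F(x)=x\otimes f\cong0\). A localization with fully faithful right adjoint identifies \(D\) with \(C/K\). This is standard for stable presentable categories: the local objects are \(K^\perp\), and the map \(C/K\to D\) is an equivalence.

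\emph{Step 3: \(K\) is a smashing ideal.} Tensoring the fibre sequence \(e\to1\to f\) with \(x\) shows \(x\otimes f\cong0\iff x\cong x\otimes e\). Hence \(K\) is the essential image of \(e\otimes-\). We have \(e\otimes f\cong 0\), because \(\gamma\otimes f\) is an equivalence, so \(e\otimes e\to e\) is an equivalence and \(e\to1\) is a left idempotent. By the correspondence between left idempotents and smashing ideals recalled in the preliminaries \cite{BF11}, the essential image of \(e\otimes-\) is a smashing ideal.

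The main point requiring care is Step 1. There one must check that the identification \(F^\mR F\cong -\otimes f\) is compatible with the unit, which is exactly the hypothesis, and deduce invertibility of the counit. This uses essential surjectivity twice: once for conservativity of \(F^\mR\) on the image of \(F\), and once to pass from objects \(F(x)\) to all of \(D\).
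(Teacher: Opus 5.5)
Your proof is correct, and its key input is the same as the paper's. Essential surjectivity makes \(F^\mR\) reflect zero objects, so \(F^\mR F(\mathrm{cone}\,\gamma)\cong\mathrm{cone}(\gamma)\otimes f\cong 0\) forces \(F\) to kill \(\mathrm{cone}(\gamma)\simeq\Sigma e\). This is exactly the paper's observation that \(F\) factors through \(C/I\) for the smashing ideal \(I\) attached to \(\gamma\).

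After that the routes diverge. The paper replaces \(C\) by \(C/I\), where \(\gamma\) becomes an isomorphism. The unit of the induced functor \(C/I\to D\) is then invertible, so that functor is fully faithful, and essential surjectivity makes it an equivalence. This gives the quotient description and identifies the kernel with \(I\) at once.

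You instead stay in \(C\). You use the triangle identity to show the counit is invertible, so \(F^\mR\) is fully faithful. You then invoke the general fact that a left adjoint with fully faithful right adjoint is the Verdier quotient by its kernel. Only afterwards do you compute that kernel as \(e\otimes C\) and check that \(e\to 1_C\) is a left idempotent.

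The paper's version is shorter, but it tacitly needs the hypotheses to descend to \(C/I\). Concretely, one must identify \(C/I\) with the essential image of \(-\otimes f\) and check that the unit of the induced functor is still given by \(x\otimes\gamma\). Your version avoids that re-verification and makes explicit where the unit-compatibility hypothesis enters. The cost is the general localization lemma plus a separate identification of the kernel.
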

\begin{proof}
  Let \(I\) be the smashing ideal corresponding to \(\gamma\).
  Since \(F\) is essentially surjective, \(F^\mR(a)\cong 0\) implies \(a\cong 0\) for \(a\in D\).
  Hence \(F\) factors through the quotient \(C\to C/I\).
  After replacing \(C\) by \(C/I\), we may assume that \(\gamma=\id_{1_C}\).
  Then \(F^\mR F=\Id_C\), and hence \(F\) is fully faithful.
  Since \(F\) is essentially surjective by assumption, it is an equivalence.
\end{proof}

\begin{lem}\label{lem:smashing_ideal_preserves_RHom}
  Let \(C\) be a tt-\(\infty\)-category.
  Let \(I\) be a smashing ideal in \(C\).
  Then, for any \(x,y\in C\), there is a natural isomorphism \(\RHom_{I}(\iota_I^\mR(x),\iota_I^\mR(y))\cong \iota_I^\mR\RHom_{C}(x,y)\).
\end{lem}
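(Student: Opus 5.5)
The plan is to prove the isomorphism by Yoneda, testing both sides against an arbitrary object \(z\in I\) in \(\Map_I(z,-)\). The key input is the description of a smashing ideal recalled in Section 2. Let \(e\to 1_C\) be the corresponding left idempotent. Then \(\iota_I\iota_I^\mR\cong e\otimes-\), and \(\iota_I^\mR\) is symmetric monoidal with unit \(1_I=\iota_I^\mR(1_C)\) (so \(\iota_I(1_I)=e\)). Moreover, \(\iota_I\) is \(C\)-linear: \(\iota_I(\iota_I^\mR(a)\otimes_I b)\cong a\otimes\iota_I(b)\) for \(a\in C\), \(b\in I\). Throughout, \(\RHom_I\) denotes the internal hom of the tt-\(\infty\)-category \(I\).

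For \(z\in I\) I will compute \(\Map_I(z,\RHom_I(\iota_I^\mR x,\iota_I^\mR y))\) as follows:
\begin{align*}
\Map_I(z\otimes_I\iota_I^\mR x,\iota_I^\mR y)
&\cong\Map_C(\iota_I(z\otimes_I\iota_I^\mR x),y)\\
&\cong\Map_C(\iota_I(z)\otimes x,y)\\
&\cong\Map_C(\iota_I z,\RHom_C(x,y))\\
&\cong\Map_I(z,\iota_I^\mR\RHom_C(x,y)).
\end{align*}
The steps use, in order, the adjunction \(\iota_I\dashv\iota_I^\mR\), the \(C\)-linearity of \(\iota_I\), the internal hom adjunction in \(C\), and \(\iota_I\dashv\iota_I^\mR\) again. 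All equivalences are natural in \(z\), \(x\), \(y\), so Yoneda yields the claimed natural isomorphism. I would also check that under this identification the isomorphism is the canonical lax comparison map \(\iota_I^\mR\RHom_C(x,y)\to\RHom_I(\iota_I^\mR x,\iota_I^\mR y)\), obtained as the adjoint of \(\iota_I^\mR(\RHom_C(x,y))\otimes_I\iota_I^\mR x\cong\iota_I^\mR(\RHom_C(x,y)\otimes x)\to\iota_I^\mR y\). This follows by tracing the identity of the left-hand side through the chain.

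The main point needing care is the \(C\)-linearity step: the formula \(\iota_I(\iota_I^\mR(a)\otimes_I b)\cong a\otimes\iota_I(b)\) must hold \(\infty\)-categorically and naturally, not only objectwise on homotopy categories. I would justify it by identifying \(I\) with \(\mathrm{Mod}_e(C)\) for the idempotent algebra \(e\), or equivalently with the essential image of \(e\otimes-\), on which the tensor product is the restriction of \(\otimes_C\) with unit \(e\). In this model, \(\iota_I^\mR=e\otimes-\), and \(\iota_I^\mR(a)\otimes_I b=e\otimes a\otimes b\cong a\otimes b\) because \(b\cong e\otimes b\) via the idempotence \(e\otimes e\cong e\). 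Naturality is then automatic, and the rest of the argument is formal.
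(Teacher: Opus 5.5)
Your proposal is correct and is the paper's argument: the same Yoneda chain (internal-hom adjunction in \(I\), the adjunction \(\iota_I\dashv\iota_I^\mR\), \(C\)-linearity of \(\iota_I\), internal-hom adjunction in \(C\), and \(\iota_I\dashv\iota_I^\mR\) again), written in the reverse direction. One minor slip in your side remark: the left idempotent \(e\to 1_C\) is an idempotent coalgebra, not an algebra (modules over the idempotent algebra \(1_C\to f\) form \(I^\perp\), not \(I\)), so \(I\) should be modeled as the essential image of \(e\otimes-\), as you also say; in any case the paper takes \(C\)-linearity of \(\iota_I\) as the definition of a smashing ideal, so no extra justification is needed there.
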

\begin{proof}
  We denote the hom spectrum by \(\sHom\). 
  For any \(z\in I\), we have
  \begin{align*}
  \sHom(z,\iota_I^\mR\RHom_{C}(x,y))
  &\cong \sHom(\iota_I(z),\RHom_C(x,y))\\
  &\cong \sHom(\iota_I(z)\otimes x,y)\\
  &\cong \sHom(\iota_I(z\otimes \iota_I^\mR(x)),y)\\
  &\cong \sHom(z\otimes \iota_I^\mR(x),\iota_I^\mR(y))\\
  &\cong \sHom(z,\RHom_I(\iota_I^\mR(x),\iota_I^\mR(y))).
  \end{align*}
  The assertion follows by the Yoneda lemma.
\end{proof}

For an object \(x\) in a category \(C\), we denote the corresponding object in the opposite category \(C^\op\) by \(x^\op\).

\begin{prop}\label{prop:smashing_localization_of_nuclear_modules}
  Let \(C\) be a small symmetric monoidal stable \(\infty\)-category, let \(\gamma:1_C\to f\) be a right idempotent in \(C\), let \(I=\{x\in C\mid x\otimes f\cong 0\}\) be the corresponding thick ideal, let \(L:C\to C/I\) be the quotient functor, and let \(\Nuc(\Ind(L)):\Nuc(\Ind(C))\to \Nuc(\Ind(C/I))\) be the induced functor.
  Assume that, for every \(x\in C\), the object \(\RHom_{\Ind(C^{\op})}(x^\op,1)\) is nuclear in \(\Ind(C^{\op})\).
  Then \(\Nuc(\Ind(L))\) is a quotient functor and its kernel is a smashing ideal.
  The corresponding right idempotent is \(\nu_C(f)\), where \(\nu_C\) is the right adjoint of the inclusion \(\Nuc(\Ind(C))\to \Ind(C)\).
\end{prop}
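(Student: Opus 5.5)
The strategy is to apply Lemma \ref{lem:characterization_of_smashing_localizations} to \(F=\Nuc(\Ind(L))\) with the candidate right idempotent \(\nu_C(f)\). Three things have to be checked: that \(\nu_C(f)\) is a right idempotent in \(\Nuc(\Ind(C))\), that \(F^\mR F(x)\cong x\otimes\nu_C(f)\) naturally with the expected unit, and that \(F\) is essentially surjective.

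First, the situation at the level of \(\Ind\). There \(\Ind(L):\Ind(C)\to\Ind(C/I)\) is the smashing localization attached to the right idempotent \(f\in C\subset\Ind(C)\), so \(\Ind(L)^\mR\Ind(L)(x)\cong x\otimes f\), and the unit is \(x\otimes\gamma\).

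Next, the nuclear right adjoint. The right adjoint of \(F\) should be \(\nu_C\circ\Ind(L)^\mR\) restricted to nuclear objects. For nuclear \(x\) this gives \(F^\mR F(x)\cong\nu_C(x\otimes f)\). The key claim is
\[
\nu_C(x\otimes f)\cong x\otimes\nu_C(f)\quad\text{for }x\in\Nuc(\Ind(C)).
\]
I would prove this in two steps.
\begin{enumerate}
\item Show that \(\nu_C\) commutes with tensoring by a nuclear object in the appropriate sense.
\item Check the claim on the generators of \(\Nuc(\Ind(C))\), using that both sides preserve colimits in \(x\); this needs \(\nu_C\) to be continuous, i.e.\ \(\Nuc(\Ind(C))\) to be dualizable.
\end{enumerate}
This is where the hypothesis enters: nuclearity of \(\RHom_{\Ind(C^\op)}(x^\op,1)\) identifies \(\Nuc(\Ind(C))\) with a category generated by explicit objects built from \(C\), namely \(\omega_1\)-type colimits of "basic nuclear" maps. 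On such objects \(\nu_C\) can be computed explicitly as \(\nu_C(y)=\colim\) of the images of compact objects under trace-class maps. Since \(f\) lies in \(C\), tensoring with \(f\) visibly commutes with this description, which should give the claim.

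Granting the claim, \(\nu_C(f)\otimes\nu_C(f)\cong\nu_C(f\otimes f)\cong\nu_C(f)\). The unit \(x\to F^\mR F(x)\) is \(\nu_C\) applied to \(x\otimes\gamma\), and this equals \(x\otimes\nu_C(\gamma)\) with \(\nu_C(\gamma):1\to\nu_C(f)\); here I use that \(1=\nu_C(1)\), because \(1\) is nuclear. In particular \(\nu_C(\gamma)\) is a right idempotent.

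Essential surjectivity of \(F\) is the remaining input. I would argue that the nuclear objects of \(\Ind(C/I)\) are generated under colimits by the images of the basic nuclear generators of \(\Ind(C)\). Every map in \(C/I\) lifts, up to the roof with \(f\)-equivalences, to \(C\), so trace-class maps between compact objects of \(\Ind(C/I)\) lift to trace-class maps in \(\Ind(C)\) after tensoring with \(f\). Combined with the fully faithfulness of \(F\) restricted to \(\nu_C(f)\)-local objects, which comes from the idempotent computation, the essential image is a localizing subcategory containing the generators, hence everything.

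I expect the main obstacle to be the commutation \(\nu_C(x\otimes f)\cong x\otimes\nu_C(f)\). Note that \(\nu_C\) is not symmetric monoidal in general, so this step genuinely uses the nuclearity hypothesis on \(\RHom(x^\op,1)\). The first thing I would try is a Yoneda argument: map a compact \(c\in C\) into both sides and use the nuclear formula \(\RHom(c,1)\otimes(-)\cong\RHom(c,-)\) together with dualization through \(C^\op\).
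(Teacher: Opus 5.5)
Your overall structure matches the paper's: apply Lemma \ref{lem:characterization_of_smashing_localizations} to \(F=\Nuc(\Ind(L))\) with idempotent \(\nu_C(f)\), compute \(F^\mR F\), then prove essential surjectivity.

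\textbf{The commutation step.} You are right that \(\nu_C(\iota_C(x)\otimes f)\cong x\otimes\nu_C(f)\) uses the hypothesis, but the paper needs no generator computation here. Under the nuclearity hypothesis, Efimov's Proposition 1.34 says \(\nu_C\) is symmetric monoidal and continuous, and the formula follows at once. Monoidality also gives \(\nu_C(f)\otimes\nu_C(f)\cong\nu_C(f)\). Your key claim alone does not give this: it computes \(\nu_C(\iota_C\nu_C(f)\otimes f)\), not \(\nu_C(f\otimes f)\). Your heuristic via an explicit description of \(\nu_C\) on basic nuclear objects is not yet an argument, but this step is repaired by the citation.

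\textbf{The gap: essential surjectivity.} Your lifting argument does not go through as stated. A trace-class map \(L(a)\to L(b)\) in \(\Ind(C/I)\) is witnessed by a map \(1\to\Ind(L)^\mR\bigl(\RHom(L(a),1)\otimes L(b)\bigr)\cong\RHom_{\Ind(C)}(a,f)\otimes b\) in \(\Ind(C)\). A trace-class map \(a\to b\otimes f\) in \(\Ind(C)\) instead needs a witness \(1\to\RHom_{\Ind(C)}(a,1)\otimes f\otimes b\). The natural comparison \(\RHom(a,1)\otimes f\to\RHom(a,f)\) goes from the second witness space to the first. Lifting would require it to be surjective on \(\pi_0\Hom(1,-\otimes b)\), and nothing guarantees this when \(a\) is not dualizable. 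That non-dualizable situation is exactly the intended one (e.g.\ \(C=E^{\omega_1}\) in Corollary \ref{cor:rigidification_preserves_smashing_localizations}). So the claim that the essential image contains the generators is unsupported.

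\textbf{The missing idea: pass to \(C^\op\).} Efimov's Proposition 1.34 also gives an equivalence \(\Nuc(\Ind(C))\cong\Nuc(\Ind(C^\op))\) under which \(\nu_C(x)\) corresponds to \(\RHom_{\Ind(C^\op)}(x^\op,1)\), and likewise for \(C/I\). On the opposite side the quotient becomes a colocalization. Concretely, \(\Ind((L^\mR)^\op)\colon\Ind((C/I)^\op)\to\Ind(C^\op)\) is the inclusion of the smashing ideal with left idempotent \(\gamma^\op\colon f^\op\to1\), and its right adjoint is \(\Ind(L^\op)\). For a smashing ideal, the right adjoint of the inclusion commutes with internal Hom (Lemma \ref{lem:smashing_ideal_preserves_RHom}). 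This gives \(\RHom(L(x)^\op,1)\cong\Ind(L^\op)\RHom(x^\op,1)\), which identifies \(\nu_{C/I}\circ\Ind(L)\) with \(F\circ\nu_C\). Since \(\nu_{C/I}\circ\Ind(L)\) is essentially surjective, so is \(F\), with no lifting or generation argument needed. In the paper the nuclearity hypothesis enters only through these two consequences of Efimov's result.
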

\begin{proof}
  By \cite[Proposition 1.34]{Efimov2025LocalizingInvariantsInverseLimits}, \(\nu_C\) is symmetric monoidal and continuous.
  Let \(\iota_{C}:\Nuc(\Ind(C))\to \Ind(C)\) and \(\iota_{C/I}:\Nuc(\Ind(C/I))\to \Ind(C/I)\) be the inclusions, and let \(\nu_{C/I}\) be the right adjoint of \(\iota_{C/I}\).
  For any \(x\in \Nuc(\Ind(C))\) and \(a\in C\), we have \(\nu_C(\iota_C(x)\otimes a)\cong \nu_C\iota_C(x)\otimes \nu_C(a)\cong x\otimes \nu_C(a)\).
  For any \(x\in \Nuc(\Ind(C))\), we have
  \[
  \Nuc(\Ind(L))^\mR\Nuc(\Ind(L))(x)=\Nuc(\Ind(L))^\mR\nu_{C/I}\iota_{C/I}\Nuc(\Ind(L))(x)=\nu_C \Ind(L^\mR) \Ind(L)\iota_C(x)= x\otimes \nu_{C}(f).
  \]
  By Lemma \ref{lem:characterization_of_smashing_localizations}, it remains to prove that \(\Nuc(\Ind(L))\) is essentially surjective.
  There is a natural equivalence \(\Nuc(\Ind(C))\cong \Nuc(\Ind(C^{\op}))\), and the composition
  \[
  C\subset \Ind(C)\os{\nu_C}{\to} \Nuc(\Ind(C))\cong \Nuc(\Ind(C^{\op}))\subset \Ind(C^{\op})
  \]
  is given by \(x\mapsto \RHom_{\Ind(C^{\op})}(x^\op,1)\) (\cite[Proposition 1.34]{Efimov2025LocalizingInvariantsInverseLimits}).
  The analogous statement holds for \(C/I\).
  The inclusion \(\Ind((L^\mR)^\op):\Ind((C/I)^{\op})\to\Ind(C^{\op})\) is identified with the inclusion of a smashing ideal, and the corresponding left idempotent is \(\gamma^\op:f^\op\to 1\).
  The functor \(\Ind(L^\op)\) is its right adjoint.
  By Lemma \ref{lem:smashing_ideal_preserves_RHom}, for any \(x\in C\), there is an isomorphism
  \[
  \RHom_{\Ind((C/I)^{\op})}(L(x)^\op,1_{\Ind((C/I)^{\op})})\cong \Ind(L^\op)(\RHom_{\Ind(C^{\op})}(x^\op,1_{\Ind(C^{\op})})).
  \]
  This isomorphism implies that \(\nu_{C/I}\Ind(L):\Ind(C)\to \Ind(C/I)\to \Nuc(\Ind(C/I))\) factors through \(\Nuc(\Ind(C))\).
  Hence \(\Nuc(\Ind(L))\) is essentially surjective.
\end{proof}

Let \(E\) be a locally rigid tt-\(\infty\)-category whose unit object \(1_E\) is \(\omega_1\)-compact.
By \cite[Theorem 4.2]{Efimov2025LocalizingInvariantsInverseLimits}, the composition
\[
  E^\rig\subset \Ind(E^{\omega_1})\xrightarrow{\colim} E
\]
admits a symmetric monoidal right adjoint \(\Lambda_E\).

\begin{cor}\label{cor:rigidification_preserves_smashing_localizations}
  Let \(E\) be a locally rigid symmetric monoidal stable \(\infty\)-category with \(\omega_1\)-compact unit object \(1_E\), and let \(I\subset E\) be a smashing ideal with corresponding right idempotent \(\gamma: 1_E\to f\).
  Assume that \(f\) is \(\omega_1\)-compact.
  Then \(E^\rig\to (E/I)^\rig\) is a quotient functor and its kernel is a smashing ideal.
  The corresponding right idempotent is \(\Lambda_E(\gamma)\).
\end{cor}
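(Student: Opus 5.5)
The plan is to reduce to Proposition \ref{prop:smashing_localization_of_nuclear_modules} with \(C=E^{\omega_1}\). Since \(f\) is \(\omega_1\)-compact, \(\gamma:1_E\to f\) is a right idempotent in the small symmetric monoidal stable \(\infty\)-category \(C=E^{\omega_1}\); note that \(1_E\) is \(\omega_1\)-compact and \(E^{\omega_1}\) is closed under tensor products by local rigidity. Put \(I_C=\{x\in C\mid x\otimes f\cong 0\}=I\cap E^{\omega_1}\) and let \(L:C\to C/I_C\) be the quotient.

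\textbf{Step 1 (identify the target).} I would identify \(C/I_C\), up to idempotent completion, with \((E/I)^{\omega_1}\). Since \(I\) is smashing and \(f\) is \(\omega_1\)-compact, the localization \(E\to E/I\cong f\otimes E\) has a colimit-preserving right adjoint. It therefore preserves \(\omega_1\)-compacts. Moreover, every \(\omega_1\)-compact object of \(E/I\) is a retract of the image of an \(\omega_1\)-compact object of \(E\): write it as an \(\omega_1\)-filtered colimit of images and use compactness. Consequently \(\Ind((E/I)^{\omega_1})\simeq\Ind(C/I_C)\), since \(\Ind\) is insensitive to idempotent completion, and hence \((E/I)^\rig=\Nuc(\Ind(C/I_C))\). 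The quotient \(E/I\) is again locally rigid with \(\omega_1\)-compact unit, so its rigidification is defined.

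\textbf{Step 2 (verify the hypothesis of Proposition \ref{prop:smashing_localization_of_nuclear_modules}).} We need that \(\RHom_{\Ind(C^\op)}(x^\op,1)\) is nuclear for all \(x\in C\). I expect this to be the main obstacle. The route I would try is Efimov's description of \(E^\rig\) in \cite[Theorem 4.2]{Efimov2025LocalizingInvariantsInverseLimits}. For a locally rigid \(E\) with \(\omega_1\)-compact unit, each \(\omega_1\)-compact \(x\) is a sequential colimit along compact (trace-class) morphisms. Dually, in \(\Ind(C^\op)\), the object \(\RHom(x^\op,1)\) is a colimit of objects \(y_n^\op\) along maps that are trace-class. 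A colimit along trace-class maps is nuclear, by the characterization of nuclear objects as colimits of trace-class sequences in \cite{CS26}. If this does not go through directly, I would instead use the equivalence \(\Nuc(\Ind(C))\simeq\Nuc(\Ind(C^\op))\) from \cite[Proposition 1.34]{Efimov2025LocalizingInvariantsInverseLimits}, together with the fact that \(\nu_C(x)\) maps to \(\RHom(x^\op,1)\), and show that this map is an isomorphism using the basic nuclear objects of \(E^\rig\).

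\textbf{Step 3 (conclude).} Proposition \ref{prop:smashing_localization_of_nuclear_modules} then gives that \(E^\rig\to(E/I)^\rig\) is a quotient functor whose kernel is a smashing ideal, with right idempotent \(\nu_C(f)\). It remains to identify \(\nu_C(f)\) with \(\Lambda_E(f)\). Here \(\Lambda_E\) is right adjoint to \(E^\rig\subset\Ind(C)\xrightarrow{\colim}E\), while \(\nu_C\) is right adjoint to the inclusion \(E^\rig\subset\Ind(C)\). For \(f\in C\), the Yoneda object \(f\in\Ind(C)\) and \(f\in E\) are compared via the fully faithful left adjoint of \(\colim\) on \(\omega_1\)-compacts. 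Mapping a nuclear object \(y\) into each, we get \(\Map(y,f)\) in \(\Ind(C)\) and \(\Map(\colim y,f)\) in \(E\). These agree for nuclear \(y\) because nuclear objects are colimits along trace-class maps, and for such maps the \(\Ind\)-hom and the colimit-hom into an \(\omega_1\)-compact object coincide. Hence \(\nu_C(\gamma)\cong\Lambda_E(\gamma)\).
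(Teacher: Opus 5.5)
Your overall route matches the paper's. The paper obtains this corollary simply by specializing Proposition \ref{prop:smashing_localization_of_nuclear_modules} to \(C=E^{\omega_1}\), with no further argument. Steps 1 and 3 are fine up to small points. In Step 1 you also need \(C/I_C\to(E/I)^{\omega_1}\) to be fully faithful (Neeman--Thomason localization at \(\omega_1\), since \(I\) is generated by the \(\omega_1\)-compact objects \(e\otimes c\)); being essentially surjective up to retracts is not enough. Step 3 is simpler than you make it: the right adjoint of \(\colim\colon\Ind(E^{\omega_1})\to E\) restricts to the Yoneda embedding on \(E^{\omega_1}\). Hence \(\Lambda_E(y)\cong\nu_C(y)\) for all \(y\in E^{\omega_1}\), and no trace-class argument is needed.

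The real gap is Step 2, the only non-formal input, and the route you propose fails there. As a functor on \(C\), the object \(\RHom_{\Ind(C^\op)}(x^\op,1)\) is \(c\mapsto\Map_E(1_E,c\otimes x)\). Since \(1_E\) is not compact, a presentation \(x=\colim_n x_n\) along compact morphisms does not ``dualize'': the natural map \(\colim_n\RHom(x_n^\op,1)\to\RHom(x^\op,1)\) is in general not an isomorphism. For example, take \(E=\tors{p}(\intg)\) and \(x=1_E=\colim_n\intg/p^n[-1]\). Evaluated at \(c=1_E\), the left side is \((\mathbf{Q}_p/\mathbf{Z}_p)[-1]\) while the right side is \(\mathbf{Z}_p\). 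In this example the hypothesis holds trivially, since \(\RHom(1^\op,1)=1^\op\); what breaks is the method. Your fallback points in the right direction. However, it presupposes an explicit description of the equivalence \(\Nuc(\Ind(C))\simeq\Nuc(\Ind(C^\op))\) on \(\nu_C(x)\), and that description is exactly what has to be proved.

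One way to close the gap works for every small symmetric monoidal \(C\). It uses only inputs the paper already relies on: \(\nu_C\) is continuous and symmetric monoidal, and \(\Nuc(\Ind(C))\) is generated under colimits by basic nuclear objects. The argument runs as follows.
\begin{enumerate}
\item Define \(\Phi\colon\Nuc(\Ind(C))\to\Ind(C^\op)\simeq\mathrm{Fun}^{\mathrm{ex}}(C,\mathrm{Sp})\) by \(\Phi(z)(c)=\Map(1,\nu_C(c)\otimes z)\). It is continuous because \(1\) is compact in \(\Nuc(\Ind(C))\).
\item We have \(\Phi(\nu_C(x))(c)\cong\Map(1,\nu_C(c\otimes x))\cong\Map_C(1,c\otimes x)\). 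Hence \(\Phi\nu_C(x)\cong\RHom_{\Ind(C^\op)}(x^\op,1)\), naturally in \(x\in C\).
\item For a basic nuclear \(z=\colim_n z_n\), with \(z_n\in C\) and trace-class transitions, continuity gives \(\Phi(z)\cong\colim_n\RHom(z_n^\op,1)\).
\item Since \(1\) is compact, a trace-class map \(a\to b\) in \(C\) factors as \(a\to a\otimes d\otimes b\to b\). This uses some \(t\colon1\to d\otimes b\) and \(e\colon a\otimes d\to1\) with \(d\in C\).
\item Read this in \(C^\op\). The induced map \(\RHom(a^\op,1)\to\RHom(b^\op,1)\) factors through the compact object \(d^\op\): first via \(e^\op\) and evaluation, then via the adjoint of \(t^\op\).
\item The resulting interleaved maps \(d_n^\op\to d_{n+1}^\op\) are again trace-class. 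So \(\Phi(z)\) is basic nuclear, and \(\Phi\) lands in \(\Nuc(\Ind(C^\op))\).
\end{enumerate}
In particular \(\RHom_{\Ind(C^\op)}(x^\op,1)\) is nuclear for every \(x\in C\). With this in place, your Steps 1 and 3 complete the proof.
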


\begin{prop}\label{prop:descent_for_rigidification}
  Let \(E\) be a locally rigid symmetric monoidal stable \(\infty\)-category with \(\omega_1\)-compact unit object \(1_E\), and let \(I_1,I_2\subset E\) be smashing ideals with corresponding right idempotents \(\gamma_1: 1_E\to f_1\) and \(\gamma_2: 1_E\to f_2\).
  Assume that \(f_1\) and \(f_2\) are \(\omega_1\)-compact and that \(I_1\cap I_2=0\).
  Let \(I_3=\oang{I_1,I_2}\).
  Then the following diagram is Cartesian in \(\pr\):
  \[
  \xymatrix{
  E^\rig\ar[d]\ar[r]&(E/I_2)^\rig\ar[d]\\
(E/I_1)^\rig\ar[r]&(E/I_3)^\rig.
  }
  \]
\end{prop}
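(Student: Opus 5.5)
We want to show that the square of rigidifications is a pullback in \(\pr\). The plan is to reduce to a standard fact: a "Mayer--Vietoris" square of smashing localizations with complementary supports is Cartesian, and this fact is itself proven from the idempotent formalism.

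\textbf{Step 1: all maps are smashing localizations.} By Corollary \ref{cor:rigidification_preserves_smashing_localizations}, the functors \(E^\rig\to(E/I_j)^\rig\) for \(j=1,2,3\) are quotient functors whose kernels are smashing ideals. The corresponding right idempotents are \(\Lambda_E(\gamma_j)\colon 1\to\Lambda_E(f_j)\). Here \(f_3\cong f_1\otimes f_2\), which is \(\omega_1\)-compact, so the corollary applies to \(I_3\) as well. The functor \((E/I_1)^\rig\to(E/I_3)^\rig\) is the further localization by the image of \(\Lambda_E(f_2)\). Since \(\Lambda_E\) is symmetric monoidal, we get
\[
\Lambda_E(f_3)\cong\Lambda_E(f_1)\otimes\Lambda_E(f_2).
\]
Thus, writing \(R=E^\rig\) and \(\phi_j=\Lambda_E(f_j)\), the square is identified with
\[
\phi_1\phi_2 R \leftarrow \phi_2 R,\qquad \phi_1 R\to \phi_1\phi_2 R,
\]
where \(\phi_j R\) denotes the essential image of \(-\otimes\phi_j\), equivalently \(R/\Ker\).

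\textbf{Step 2: the left idempotents are orthogonal.} Let \(e_j\to 1_E\) be the left idempotents of \(I_j\). The condition \(I_1\cap I_2=0\) gives \(e_1\otimes e_2\cong0\), because \(e_1\otimes e_2\) lies in both ideals. We must transport this to \(R\). The left idempotent of the kernel of \(R\to(E/I_j)^\rig\) is the fiber of \(\Lambda_E(\gamma_j)\). Since \(\Lambda_E\) is exact and symmetric monoidal, this fiber is \(\Lambda_E(e_j)\). Hence
\[
\Lambda_E(e_1)\otimes\Lambda_E(e_2)\cong\Lambda_E(e_1\otimes e_2)=0.
\]
Equivalently, the natural map \(1\to \phi_1\times_{\phi_1\phi_2}\phi_2\) is an equivalence in \(R\). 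This follows by tensoring the cofiber sequence \(\Lambda_E(e_1)\to1\to\phi_1\) with \(\phi_2\) and comparing fibers.

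\textbf{Step 3: the general Cartesian lemma.} The remaining claim is this: for a presentably symmetric monoidal stable \(R\) with right idempotents \(\phi_1,\phi_2\) whose left idempotents \(\epsilon_1,\epsilon_2\) satisfy \(\epsilon_1\otimes\epsilon_2=0\), the functor
\[
R\to \phi_1R\times_{\phi_1\phi_2R}\phi_2R
\]
is an equivalence.

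\emph{Full faithfulness.} The right adjoint sends a triple \((x_1,x_2,\alpha)\) to the pullback \(x_1\times_{x_{12}}x_2\) computed in \(R\). The unit at \(x\) is
\[
x\to x\otimes(\phi_1\times_{\phi_1\phi_2}\phi_2),
\]
which is an equivalence by Step 2.

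\emph{Conservativity of the right adjoint.} Given a triple with pullback \(y\), compute \(\phi_1\otimes y\). Tensoring with \(\phi_1\) is exact, so
\[
\phi_1\otimes y\cong x_1\times_{x_{12}}(\phi_1\otimes x_2)\cong x_1\times_{x_{12}}x_{12}\cong x_1,
\]
using \(\phi_1\otimes x_1\cong x_1\) and \(\phi_1\otimes x_2\cong x_{12}\). Symmetrically \(\phi_2\otimes y\cong x_2\). So the counit is an equivalence and the functor is an equivalence.

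Finally, limits in \(\pr\) are computed in \(\mathrm{Cat}_\infty\), so this gives the Cartesian square in \(\pr\).

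\textbf{Main obstacle.} The delicate point is Step 1/2: identifying the transition functor \((E/I_1)^\rig\to(E/I_3)^\rig\) with the smashing localization at \(\Lambda_E(f_2)\), and confirming \(\Lambda_E(e_j)\) is the left idempotent, compatibly with the functor from the universal property. I would first apply Corollary \ref{cor:rigidification_preserves_smashing_localizations} to \(E/I_1\) with the idempotent image of \(f_2\), which is \(\omega_1\)-compact. I would then check that \(\Lambda_{E/I_1}\) is compatible with \(\Lambda_E\) under the localization, using the explicit description of \(\Lambda\) as the right adjoint of \(\colim\circ\iota\).
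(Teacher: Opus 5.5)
Your proposal is correct and follows the paper's proof. Corollary \ref{cor:rigidification_preserves_smashing_localizations} makes all three functors smashing localizations with idempotents \(\Lambda_E(e_i)\) and \(\Lambda_E(f_i)\), and monoidality of \(\Lambda_E\) gives \(\Lambda_E(e_1)\otimes\Lambda_E(e_2)\cong 0\) and \(\Lambda_E(f_1)\otimes\Lambda_E(f_2)\cong\Lambda_E(f_3)\).

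The one difference is that you prove the gluing lemma of Step 3 by hand, whereas the paper only checks the hypotheses of Scholze's Proposition 10.5: commuting localizations with composite \(L_3^{\mR}L_3\), jointly conservative. Your ``main obstacle'' also needs no comparison of \(\Lambda_{E/I_1}\) with \(\Lambda_E\). Since \(E^\rig\to(E/I_1)^\rig\) is a Bousfield localization, commutativity of the square forces the transition functor \((E/I_1)^\rig\to(E/I_3)^\rig\) to be its fully faithful right adjoint followed by \(E^\rig\to(E/I_3)^\rig\). That is exactly \(-\otimes\Lambda_E(f_2)\) on \(\Lambda_E(f_1)\)-local objects.
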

\begin{proof}
  Let \(e_i\to 1_E\) and \(1_E\to f_i\) be the left and right idempotents corresponding to \(I_i\), and let \(L_i\colon E^\rig\to (E/I_i)^\rig\) be the induced functor, for \(i=1,2,3\).
  By Corollary \ref{cor:rigidification_preserves_smashing_localizations}, these functors are quotient functors and their kernels are smashing ideals.
  The corresponding left and right idempotents are \(\Lambda_E(e_i)\) and \(\Lambda_E(f_i)\), respectively.
  Since \(\Lambda_E\) is symmetric monoidal, we have \(\Lambda_E(e_1)\otimes \Lambda_E(e_2)\cong 0\) and \(\Lambda_E(f_1)\otimes \Lambda_E(f_2)\cong \Lambda_E(f_3)\).
  For each \(i\), the localization \(L_i^\mR L_i\) is given by tensoring with the right idempotent \(\Lambda_E(f_i)\).
  Therefore \(L_1^\mR L_1\circ L_2^\mR L_2\cong L_2^\mR L_2\circ L_1^\mR L_1\cong L_3^\mR L_3\).
  Moreover, if \(L_1^\mR L_1(x)\cong 0\) and \(L_2^\mR L_2(x)\cong 0\), then \(x\cong \Lambda_E(e_1)\otimes x\) and \(x\cong \Lambda_E(e_2)\otimes x\).
  Hence \(x\cong x\otimes \Lambda_E(e_1)\otimes \Lambda_E(e_2)\cong 0\).
  Thus the hypotheses of \cite[Proposition 10.5]{Scholze2026LecturesCondensedMathematics} are satisfied, and the assertion follows.
\end{proof}

\section*{Acknowledgments}
I am deeply grateful to Seidai Yasuda for his invaluable support during the preparation of this paper.

This work is supported by JSPS KAKENHI Grant Number JP26KJ0476.

\printbibliography
\end{document}